\theoremstyle{plain}
\newtheorem{Theorem}{Theorem}[section]
\newtheorem{Lemma}[Theorem]{Lemma}
\newtheorem{Corollary}[Theorem]{Corollary}
\newtheorem{Proposition}[Theorem]{Proposition}
\newtheorem{Definition}{Definition}[section]
\newtheorem{Remark}{Remark}[section]
\newtheorem{Example}{Example}[section]
\begin{document}

\title{A generalization of Duflo's conjecture}
 
\author{Hongfeng Zhang}

\address[Hongfeng Zhang]{BICMR, Peking University, No. 5 Yiheyuan Road, Haidian District, Beijing 100871, China.}
\email{zhanghongf@pku.edu.cn}

\abstract{In this article, we generalize Duflo's conjecture to understand the branching laws of non-discrete series.
We give a unified description on the geometric side about the restriction of an irreducible unitary representation $\pi$ of $\mathrm{GL}_n(k)$, $k=\mathbb{R}$ or $\mathbb{C}$, to the mirabolic subgroup, where $\pi$ is attached to a certain kind of coadjoint orbit.}
\endabstract

\maketitle

\noindent {\bf Mathematics Subject Classification (2010).} 22E46 (17B08, 53D20).

\noindent {\bf Keywords.} Kirillov's conjecture,  Duflo's conjecture, orbit method, moment map.

\tableofcontents

\section{Introduction}

One goal of the orbit method is to establish a correspondence between coadjoint orbits and irreducible unitary representations of Lie groups. One can benefit a lot from such a correspondence, for example, finding out  irreducible unitary representations through coadjoint orbits; or predicting the decomposition of $\pi|_H$ by looking at the decomposition of $\mathrm{p}(\mathcal{O}_{\pi})$
as $H$-orbit, where $\pi$ is a unitary representation of the Lie group $G$ and is attached to the coadjoint orbit $\mathcal{O}_{\pi}$, $H$ is a closed subgroup of $G$,  and $\mathrm{p}$ is the moment map from $\mathcal{O}_{\pi}$ to $\mathfrak{h}^*$ with $\mathfrak{h}^*$ the dual of the Lie algebra of $H$; etc.(see Kirillov \cite{K}, Introduction).

The orbit method is brought up by Kirillov, who attached an irreducible unitary representation to a coadjoint
orbit for nilpotent groups in a perfect way. Later, Kostant (see Auslander-Kostant \cite{AK}) established the theory of quantization, and built up the orbit method for solvable groups. Duflo \cite{D} constructed all irreducible unitary representations of almost algebraic groups from certain coadjoint orbits, assuming one knew unitary dual of reductive groups. It suggested that to built up the orbit method for general Lie groups, one should concentrate on reductive groups. The quantization problem of coadjoint orbits of reductive groups can be reduced to quantizing nilpotent coadjoint orbits. It has not been solved completely, but there are some wonderful results (see Vogan \cite{V3}, Chapter 10).

In spirit by some work on how the branching laws behave in the orbit method, including Heckman \cite{H} and Guillemin-Sternberg \cite{GS} for compact groups, Fujiwara \cite{F} for exponential solvable groups, and the work of Kobayashi \cite{Ko} on the branching laws of reductive groups, Duflo \cite{DV} formulated a conjecture for the branching problem of discrete series of almost algebraic group (see also Liu \cite{L}).

Let $G$ be an (almost) algebraic group, let $H$ be a closed algebraic subgroup of $G$. Let $\mathfrak{g}$ (resp. $\mathfrak{h}$) be the Lie algebra of $G$ (resp. of $H$). Let $\mathfrak{g}^*$ (resp. $\mathfrak{h}^*$) be the dual of $\mathfrak{g}$ (resp. of $\mathfrak{h}$). Let $\pi$ be a discrete series representation of $G$. By Duflo \cite{D1}, $\pi$ is attached to a strongly regular $G$-coadjoint orbit $\mathcal{O}_{\pi}$. Consider the restriction of $\pi$ to $H$, denoted by $\pi|_H$, Duflo's conjecture says that
\begin{itemize}
\item[(i)] $\pi|_H$ is $H$-admissible if and only if the moment map $\mathrm{p}:\mathcal{O}_{\pi}\to \mathfrak{h}^*$ is weakly proper.
\item[(ii)] If $\pi|_H$ is $H$-admissible, then each irreducible $H$-representation $\sigma$ which appears in $\pi|_H$ is attached to a strongly regular $H$-coadjoint orbit $\Omega$ which is contained in $\mathrm{p}(\mathcal{O}_{\pi})$.
\item[(iii)] If $\pi|_H$ is $H$-admissible, then the multiplicity of each such $\sigma$ can be expressed geometrically in terms of the reduced space of $\Omega$ with respect to the moment map $\mathrm{p}$.
\end{itemize}

Let's explain in details.  The notion ``almost algebraic
group" is explained in \cite{D1}. An element $f\in \mathfrak{g}^*$ is called strongly regular if $f$ is regular (i.e. the coadjoint orbit containing $f$ is of maximal dimension) and its ``reductive factor" $\mathfrak{s}(f):=\{X\in \mathfrak{g}(f): \mathrm{ad}X\ \text{is semisimple}\}$ is of maximal dimension among the reductive factors of all the regular elements in $\mathfrak{g}^*$, where $\mathfrak{g}(f)$ denotes the centralizer of $f$ in $\mathfrak{g}$.

Let $\mathcal{O}$ be a $G$-coadjoint orbit in $\mathfrak{g}^*$. Then $\mathcal{O}$ is equipped with the Kirillov-Kostant-Souriau symplectic form $\omega$ and becomes an $H$-Hamiltonian space. The corresponding moment map is the natural projection $\mathrm{p}: \mathcal{O}\to \mathfrak{h}^*$.

In (i), the notion ``$H$-admissible" is due to Kobayashi, which means that $\pi|_H$ is discretely decomposable (i.e. $\pi|_H$ is a direct sum of irreducible unitary representations of $H$) and all irreducible representations of $H$ has only finite multiplicities in $\pi|_H$.
The notion ``weak properness" means that the preimage (for $\mathrm{p}$) of each compact subset which is contained in $\mathrm{p}(\mathcal{O}_{\pi})\cap \Upsilon_{sr}$ is compact in $\mathcal{O}_{\pi}$, where $\Upsilon_{sr}$ denotes the set of strongly regular elements in $\mathfrak{h}^*$.

In (iii), the reduce space of $\Omega$ is equal to $\mathrm{p}^{-1}(\Omega)/H$.

Duflo's conjecture has been proved in some cases (see Liu \cite{L}), and is also generalized by Liu-Yu \cite{LY},  who verified Duflo's conjecture for the restriction of tempered representations of $\mathrm{GL}_n(k)$ (for $k=\mathbb{R}$ or $\mathbb{C}$) to its mirabolic subgroup and gave a geometric interpretation of the Kirillov's conjecture. Recently, Liu-Oshima-Yu \cite{LOY} verifies Duflo's conjecture for the restriction of irreducible unitary representations of $\mathrm{Spin}(N,1)$ to its parabolic subgroups.

This article generalizes Duflo's conjecture in the framework of Kirillov's conjecture in spirit by Liu-Yu \cite{LY}. We obtain a generalization for the irreducible unitary representations of $\mathrm{GL}_n(k)$ (for $k=\mathbb{R}$ or $\mathbb{C}$) which are attached to some coadjoint orbits in section \ref{sec-5}.

Let's say a few words about the Kirillov's conjecture. The conjecture says that the restriction of any irreducible unitary representation of $\mathrm{GL}_n(k)$ (for $k=\mathbb{R},\mathbb{C}$ or $Q_p$) to the mirabolic subgroup is also irreducible. It was proved by Bernstein for $p$-adic fields \cite{Ber}, Sahi \cite{S} for tempered representations of $\mathrm{GL}_n(\mathbb{R})$ or $\mathrm{GL}_n(\mathbb{C})$, and
Sahi-Stein \cite{SS} for Speh representations of $\mathrm{GL}_n(\mathbb{R})$, and Baruch
\cite{Bar} for the archimedean fields.

Here are the main results of this article. Let $k=\mathbb{R}$ or $\mathbb{C}$,  let $G=\mathrm{GL}_n(k)$, and let $P=P_n(k)$ be the mirabolic subgroup of $G$, consisting of the elements whose last row is $(0,\cdots,0,1)$. Let $\mathfrak{p}$ denote the Lie algebra of $P$ and $\mathfrak{p}^*$ denote the dual of $\mathfrak{p}$.
Assume that $\pi$ is an irreducible unitary representation of $G$ and $\pi$ is attached to the $G$-coadjoint orbit $\mathcal{O}_{\pi}$ in section \ref{sec-5}.
It turns out that the image of the moment map
$\mathrm{p}: \mathcal{O}_{\pi}\to \mathfrak{p}^*$
contains finite $P$-coadjoint orbits, and there is a unique dense $P$-coadjoint orbit in $\mathrm{p}(\mathcal{O}_{\pi})$, denoted by $\Omega$. The moment map $\mathrm{p}$ is proper over $\Omega$. Moreover, $\pi|_{P}$ is attached to $\Omega$ in the sense of Duflo. It explores more about the geometry of the Kirillov's conjecture.

The article is organized as follows.

In section 2, we present the classification of the $P_n(k)$-coadjoint orbits and its proof, which comes from Liu-Yu \cite{LY}.

In section 3, we compute the moment map and obtain similar results as Liu-Yu \cite{LY} on the geometric side.

In section 4, we summarize some results of Kirillov's conjecture, from which we obtain the restrictions of all irreducible unitary representations of $\mathrm{GL}_n(k)$ to $P_n(k)$.

In section 5, firstly, we summarize the orbit method for reductive groups, and obtain the correspondence between coadjoint orbits and irreducible unitary representations of $\mathrm{GL}_n(k)$. Secondly, we show how to construct irreducible unitary representations from coadjoint orbits of $P_n(k)$ in the sense of Duflo.

In section 6, our generalization is presented and proved by comparing the results of the moment maps and the restrictions of irreducible unitary representations of $\mathrm{GL}_n(k)$ to $P_n(k)$.

\section{$P_n(k)$-coadjoint orbit}\label{sec-2}

This section is mainly adapted from Liu-Yu \cite{LY}, which will be used in the rest of the article.

\subsection{Coadjoint action, the dual map and the moment map}

Let $k$ be the field $\mathbb{R}$ or $\mathbb{C}$, and let $n\in \mathbb{Z}_{+}$. Set $G_n(k)=\mathrm{GL}_n(k)$ and \[P_n(k)=\{\left(\begin{array}{cc}A&\alpha\\ 0&1\end{array}\right) : A\in \mathrm{GL}_{n-1}(k),\alpha\in k^{n-1}\}\]
be the mirabolic subgroup of $G_n(k)$.  $\mathfrak{g}_n(k)$ denotes the Lie algebra of $G_n(k)$ and  \[\mathfrak{p}_n(k)=\{\left(\begin{array}{cc}A&\alpha\\ 0&0\end{array}\right) : A\in \mathfrak{gl}_{n-1}(k),\alpha\in k^{n-1}\}\]
denotes the Lie algebra of $P_n(k)$. $\mathfrak{g}_n(k)^*$ and $\mathfrak{p}_n(k)^*$ denote the dual spaces of $\mathfrak{g}_n(k)$ and $\mathfrak{p}_n(k)$, respectively.

$G_n(k)$ acts on $\mathfrak{g}_n(k)^*$ through
\[(g\cdot f)\xi=f(g^{-1}\cdot \xi), \forall g\in G_n(k), \forall f\in \mathfrak{g}_n(k)^*,\forall \xi\in \mathrm{g}_n(k).\]
This is called the coadjoint action, and a $G_n(k)$-orbit in $\mathfrak{g}_n(k)^*$ is called a coadjoint orbit. Similarly, one can define the coadjoint action of $P_n(k)$ on $\mathfrak{p}_n(k)^*$ and the corresponding coadjoint orbit.

By taking trace, we can define a $G_n(k)$-invariant nondegenerate bilinear form on $\mathfrak{g}_n(k)\times \mathfrak{g}_n(k)$,
\[(X,Y)\mapsto \mathrm{tr}(XY), X,Y\in \mathfrak{g}_n(k).\]
We have a $G_n(k)$-module isomorphism
\[\mathrm{pr}: \mathfrak{g}_n(k)\to \mathfrak{g}_n(k)^*,  X\mapsto (Y\mapsto \mathrm{tr}(XY), \forall Y\in \mathfrak{g}_n(k)), \forall X\in \mathfrak{g}_n(k).\]

Let $\mathrm{pr}': \mathfrak{g}_n(k)\to \mathfrak{p}_n(k)^*$, $ X\mapsto (Y\mapsto \mathrm{tr}(XY),\forall Y\in \mathfrak{p}_n(k)), \forall X\in \mathfrak{g}_n(k)$, then
\[\ker \mathrm{pr}'=\{\left(\begin{array}{cc} 0&\alpha\\ 0&t\end{array}\right) : \alpha\in k^{n-1}, t\in k \}.\]
Let $\bar{\mathfrak{p}}_n(k)=\{\left(\begin{array}{cc}A&0 \\ \alpha&0\end{array}\right) | A\in \mathfrak{g}_{n-1}(k), \alpha^t\in k^{n-1}\}$, then $\mathfrak{g}_n(k)=\ker \mathrm{pr}'\oplus \bar{\mathfrak{p}}_n(k)$,  and $\mathrm{pr}'|_{\bar{\mathfrak{p}}_n(k)}$ is an isomorphism.

We use $\mathrm{pr}_n$ (resp. $\mathrm{pr}'_n$) to denote $\mathrm{pr}$ (resp. $\mathrm{pr}'$) if there is ambiguity.

\subsection{Classification of $P_n(k)$-coadjoint orbit}

Let
\[L_n(k)=\{\left(\begin{array}{cc} A &0\\ 0& 1\end{array}\right)\ |\ A\in G_{n-1}(k)\} \]
and
\[N_n(k)=\{\left(\begin{array}{cc} I_{n-1} &\alpha \\ 0& 1\end{array}\right)\ |\ \alpha\in k^{n-1} \}.\]
Let $\mathfrak{l}_n(k)$ and $\mathfrak{n}_n(k)$ denote the Lie algebra of $L_n(k)$ and $N_n(k)$, respectively.

Then $L_n(k)$ is the Levi subgroup of $P_n(k)$, $N_n(k)$ is the unipotent radical of $P_n(k)$, and $P_n(k)=L_n(k)\ltimes N_n(k)$. Moreover, we have the following exact sequence as $P_n(k)$-module
\[ 0\to \mathfrak{n}_n(k)\to \mathfrak{p}_n(k)\to \mathfrak{l}_n(k)\to 0,\]
so is
\[ 0\to \mathfrak{l}_n(k)^*\to \mathfrak{p}_n(k)^*\to \mathfrak{n}_n(k)^*\to 0.\]
We will regard $\mathfrak{l}_n(k)^*$ (resp. $\mathfrak{n}_n(k)^*$) as a $P_n(k)$-submodule (resp. quotient module) of $\mathfrak{p}_n(k)^*$ in the following.

Let $h\in \mathfrak{p}_n(k)^*$ such that $h|_{\mathfrak{l}_n(k)}=0$, $h|_{\mathfrak{n}_n(k)}\neq 0$. We have following key proposition.

\begin{Proposition}\label{ob}
\begin{itemize}
\item[(1)]Every element in $\mathfrak{p}_n(k)^*$ is $P_n(k)$-conjugated to an element in $\mathfrak{l}_n(k)^*$ or $h+\mathfrak{l}_n(k)^*$.
\item[(2)]$\mathrm{Stab}_P(h|_{\mathfrak{n}_n(k)})=L_n(k)^h\ltimes N_n(k)$ and $(h+\mathfrak{l}_n(k)^*)/N_n(k)$ is $h+(\mathfrak{l}_n(k)^h)^*$.
\item[(3)]The stabilizer $L_n(k)^h\cong P_{n-1}(k)$.
\end{itemize}
 As a result,
\[\mathfrak{p}_n(k)^*/P_n(k) \cong \mathfrak{l}_n(k)^*/L_n(k) \bigsqcup \mathfrak{p}_{n-1}(k)^*/P_{n-1}(k).\]
\end{Proposition}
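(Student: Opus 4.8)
The plan is to run the orbit method for the semidirect product $P_n(k)=L_n(k)\ltimes N_n(k)$, using that $N_n(k)$ is abelian so that the whole coadjoint picture over the ``base'' $\mathfrak{n}_n(k)^*$ is controlled by a single Mackey covector.

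\emph{Proof of (1).} The exact sequence $0\to \mathfrak{l}_n(k)^*\to \mathfrak{p}_n(k)^*\to \mathfrak{n}_n(k)^*\to 0$ is $P_n(k)$-equivariant, and $P_n(k)$ acts on $\mathfrak{n}_n(k)^*$ through the quotient $P_n(k)\to L_n(k)\cong G_{n-1}(k)$ by the dual of the standard representation on $k^{n-1}$; this action has exactly two orbits, $\{0\}$ and $\mathfrak{n}_n(k)^*\setminus\{0\}$. Taking $h$ as in the statement, so that $h|_{\mathfrak{n}_n(k)}$ is a nonzero covector, every $f\in\mathfrak{p}_n(k)^*$ is therefore $P_n(k)$-conjugate into the fiber over $0$, which is $\mathfrak{l}_n(k)^*$, or into the fiber over $h|_{\mathfrak{n}_n(k)}$, which is the affine subspace $h+\mathfrak{l}_n(k)^*$. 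The image in $\mathfrak{n}_n(k)^*$ is a $P_n(k)$-invariant, zero in the first case and nonzero in the second, so the two alternatives are mutually exclusive.

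\emph{Proof of (2) and (3).} Since $N_n(k)$ is abelian it acts trivially on $\mathfrak{n}_n(k)^*$, whence $\mathrm{Stab}_{P_n(k)}(h|_{\mathfrak{n}_n(k)})=L_n(k)^h\ltimes N_n(k)$ with $L_n(k)^h:=\mathrm{Stab}_{L_n(k)}(h|_{\mathfrak{n}_n(k)})$. Normalizing $h|_{\mathfrak{n}_n(k)}$ to the last-coordinate functional on $k^{n-1}$, a direct matrix computation identifies $L_n(k)^h$ with the set of $A\in G_{n-1}(k)$ whose last row is $(0,\dots,0,1)$, which is exactly $P_{n-1}(k)$; this is an isomorphism of algebraic groups, proving (3) and inducing $\mathfrak{l}_n(k)^h\cong \mathfrak{p}_{n-1}(k)$ compatibly with coadjoint actions. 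For the fiber quotient, compute the coadjoint action of $N_n(k)$ on $h+\mathfrak{l}_n(k)^*$: for $Y\in\mathfrak{n}_n(k)$ and $\xi\in\mathfrak{l}_n(k)^*$ one has $\mathrm{ad}^*(Y)\xi=0$ (as $[\mathfrak{n}_n(k),\mathfrak{p}_n(k)]\subseteq\mathfrak{n}_n(k)$ and $\xi$ vanishes on $\mathfrak{n}_n(k)$), while $\mathrm{ad}^*(Y)h\in\mathfrak{l}_n(k)^*$ and hence $\mathrm{ad}^*(Y)^2 h=0$; therefore $\exp(Y)\cdot(h+\xi)=h+\xi+\mathrm{ad}^*(Y)h$, i.e.\ $N_n(k)$ acts on the fiber by translations. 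The linear map $Y\mapsto \mathrm{ad}^*(Y)h$ from $\mathfrak{n}_n(k)$ to $\mathfrak{l}_n(k)^*$ is the one induced by the pairing $(Y,Z)\mapsto h|_{\mathfrak{n}_n(k)}([Z,Y])$ on $\mathfrak{n}_n(k)\times\mathfrak{l}_n(k)$; this pairing is right-nondegenerate modulo $\mathfrak{l}_n(k)^h$ and, for the mirabolic pair, left-nondegenerate, so its image is precisely the annihilator $(\mathfrak{l}_n(k)^h)^\perp\subseteq\mathfrak{l}_n(k)^*$. Consequently the $N_n(k)$-orbit of $h+\xi$ equals $h+\xi+(\mathfrak{l}_n(k)^h)^\perp$, the quotient $(h+\mathfrak{l}_n(k)^*)/N_n(k)$ is $h+\mathfrak{l}_n(k)^*/(\mathfrak{l}_n(k)^h)^\perp=h+(\mathfrak{l}_n(k)^h)^*$, and since $h$ is fixed by all of $L_n(k)$ the residual $L_n(k)^h$-action matches the coadjoint action on $(\mathfrak{l}_n(k)^h)^*\cong\mathfrak{p}_{n-1}(k)^*$. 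This is (2).

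\emph{Assembling the orbit space.} The subspace $\mathfrak{l}_n(k)^*$, being the annihilator of the ideal $\mathfrak{n}_n(k)$, is $P_n(k)$-stable, and by the computation above $N_n(k)$ (connected unipotent) acts trivially on it, so $P_n(k)$-orbits in $\mathfrak{l}_n(k)^*$ are $L_n(k)$-orbits, contributing $\mathfrak{l}_n(k)^*/L_n(k)$. A $P_n(k)$-orbit not contained in $\mathfrak{l}_n(k)^*$ surjects onto $\mathfrak{n}_n(k)^*\setminus\{0\}=L_n(k)\cdot(h|_{\mathfrak{n}_n(k)})$, and its intersection with the fiber $h+\mathfrak{l}_n(k)^*$ is a single orbit of $\mathrm{Stab}_{P_n(k)}(h|_{\mathfrak{n}_n(k)})=L_n(k)^h\ltimes N_n(k)$; by (2) these are in bijection with $L_n(k)^h$-orbits in $h+(\mathfrak{l}_n(k)^h)^*$, i.e.\ with $P_{n-1}(k)$-orbits in $\mathfrak{p}_{n-1}(k)^*$. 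The two families are disjoint by the invariant noted in the proof of (1), which yields the bijection $\mathfrak{p}_n(k)^*/P_n(k)\cong \mathfrak{l}_n(k)^*/L_n(k)\sqcup\mathfrak{p}_{n-1}(k)^*/P_{n-1}(k)$. The step needing the most care is (2): checking that the $N_n(k)$-action on the affine fiber is by translations with translation lattice exactly $(\mathfrak{l}_n(k)^h)^\perp$, and that after the shift by the $L_n(k)$-fixed vector $h$ the quotient is $L_n(k)^h$-equivariantly the coadjoint module $\mathfrak{p}_{n-1}(k)^*$ and not merely abstractly isomorphic to it; everything else is standard semidirect-product bookkeeping.
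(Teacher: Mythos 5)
Your proposal is correct and follows essentially the same route as the paper: transitivity of $L_n(k)\cong\mathrm{GL}_{n-1}(k)$ on $\mathfrak{n}_n(k)^*\setminus\{0\}$ for (1), triviality of the abelian $N_n(k)$-action on $\mathfrak{n}_n(k)^*$ plus the computation that $\exp(X)$ translates the fiber $h+\mathfrak{l}_n(k)^*$ by $\mathrm{ad}^*(X)h$ with image exactly $(\mathfrak{l}_n(k)^h)^{\perp}$ for (2), and a direct matrix check for (3). You merely spell out a few steps the paper leaves implicit (the translation action on all of the fiber rather than just on $h$, the equivariance of the identification with $\mathfrak{p}_{n-1}(k)^*$, and the final orbit-space bookkeeping), so no substantive difference.
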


\begin{proof}
(1) Take any $x\in \mathfrak{p}_n(k)^*$, if $x|_{\mathfrak{n}_n(k)}=0$, then $x\in \mathfrak{l}_n(k)^*$.
Assume that $x|_{\mathfrak{n}_n(k)}\neq 0$. As $L_n(k)$ acts on $\mathfrak{n}_n(k)^*$ transitively, $x$ is conjugated to an element in $h +\mathfrak{l}_n(k)^*$.

(2) As $N_n(k)$ is abelian, $N_n(k)$ acts trivially on $\mathfrak{n}_n(k)^*$, so
\[N_n(k)\subset \mathrm{Stab}_P(h|_{\mathfrak{n}_n(k)}).\]
Using $P_n(k)=L_n(k)\ltimes N_n(k)$, we only need to compute $\mathrm{Stab}_{L_n(k)}(h|_{\mathfrak{n}_n(k)})$.
As $h|_{\mathfrak{l}_n(k)}=0$, $\mathrm{Stab}_{L_n(k)}(h|_{\mathfrak{n}_n(k)})=\mathrm{Stab}_{L_n(k)}(h)=L_n(k)^h$.

 Let $X \in \mathfrak{n}_n(k)$, $\exp(X)\in N_n(k)=\exp\mathfrak{n}_n(k) $, $\exp(X)$ acts on $h$ by
\[h\mapsto (Y\mapsto h(\mathrm{Ad}(\exp(-X))(Y)), \forall Y\in \mathfrak{p}_n(k)).\]
Since $\mathrm{Ad}(\exp(-X))(Y)=Y-[X,Y], \forall Y\in \mathfrak{p}_n(k)$, we get that
\[h(\mathrm{Ad}(\exp(-X))(Y))=h(Y)+h(-[X,Y])=(h+\mathrm{ad}(X)h)(Y), \forall Y\in \mathfrak{p}_n(k).\]
As the pairing between $\mathfrak{l}_n(k)$ and $\mathfrak{l}_n(k)^*$ is nondegenerate, we only need to check that $\mathfrak{l}_n(k)^h$ is the zero set of $\{\mathrm{ad}(X)(h)|X\in\mathfrak{n}\}$. Take any $l\in \mathfrak{l}_n(k)$ such that $l$ vanishes at  $\{\mathrm{ad}(X)(h)|X\in\mathfrak{n}\}$, that is,
\[l(\mathrm{ad}(X)(h))=h(-[X,l])=-X(\mathrm{ad}(l)(h))=0,\]
so $l\in \mathfrak{l}_n(k)^h$.

(3) It is easy to check directly.
\end{proof}

We go to calculate $\mathrm{Stab}_{P_n(k)}(x)$ for any $x\in \mathfrak{p}_n(k)^*$.

\begin{Proposition}\label{st1}
Let $x$ be an element in $h+\mathfrak{l}_n(k)^*$, and let $[x]$ be the image of $x$ in $h+(\mathfrak{l}_n(k)^h)^*$ as Proposition \ref{ob}.(2) and $x'=[x]-h$. Then $\mathrm{Stab}_{P_n(k)}(x)=\mathrm{Stab}_{L_n(k)^h}(x')$.
\end{Proposition}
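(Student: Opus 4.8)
The plan is to compute the stabilizer of $x$ in $P_n(k) = L_n(k) \ltimes N_n(k)$ in two stages, first cutting down by the unipotent radical $N_n(k)$ and then identifying the remaining condition inside the Levi. First I would use Proposition \ref{ob}.(2): since $x \in h + \mathfrak{l}_n(k)^*$ and $h|_{\mathfrak{l}_n(k)} = 0$, every element of $N_n(k)$ fixes $h|_{\mathfrak{n}_n(k)}$, so $\mathrm{Stab}_{P_n(k)}(x) \subset \mathrm{Stab}_P(h|_{\mathfrak{n}_n(k)}) = L_n(k)^h \ltimes N_n(k)$. Thus it suffices to intersect with this smaller group. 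For $g = \ell n$ with $\ell \in L_n(k)^h$ and $n \in N_n(k)$, the action on the $h$-component is trivial (as just noted), so $g \cdot x \in h + \mathfrak{l}_n(k)^*$ again and the condition $g\cdot x = x$ only concerns the $\mathfrak{l}_n(k)^*$-component; passing through the identification $(h + \mathfrak{l}_n(k)^*)/N_n(k) \cong h + (\mathfrak{l}_n(k)^h)^*$ of Proposition \ref{ob}.(2), the image $[x] = h + x'$ is $N_n(k)$-invariant and $L_n(k)^h$-equivariant.

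Next I would disentangle the $N_n(k)$-part from the $L_n(k)^h$-part of the stabilizer. The key point is that, modulo $N_n(k)$, the $L_n(k)^h$-action on $h + (\mathfrak{l}_n(k)^h)^*$ is exactly the (shifted) coadjoint action of $L_n(k)^h$ on $(\mathfrak{l}_n(k)^h)^*$, so $\ell n \cdot x = x$ forces $\ell \in \mathrm{Stab}_{L_n(k)^h}(x')$ after projecting mod $N_n(k)$. Conversely, for each such $\ell$, one checks that $\ell \cdot x$ and $x$ lie in the same $N_n(k)$-orbit inside $h + \mathfrak{l}_n(k)^*$, and—because $N_n(k)$ acts freely on that affine slice in the relevant directions (the computation in the proof of Proposition \ref{ob}.(2) shows $\{\mathrm{ad}(X)h : X \in \mathfrak{n}\}$ spans a complement to $(\mathfrak{l}_n(k)^h)^*$ inside $\mathfrak{l}_n(k)^*$)—there is a \emph{unique} $n \in N_n(k)$ with $n^{-1}\ell \cdot x = x$. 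Hence $\mathrm{Stab}_{P_n(k)}(x)$ maps isomorphically onto $\mathrm{Stab}_{L_n(k)^h}(x')$ via the projection $L_n(k)^h \ltimes N_n(k) \to L_n(k)^h$, and one writes the section $\ell \mapsto \ell \cdot n(\ell)$ explicitly, giving the claimed identification.

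I expect the main obstacle to be the second stage: verifying that the $N_n(k)$-correction $n(\ell)$ exists and is unique, i.e. that for $\ell \in \mathrm{Stab}_{L_n(k)^h}(x')$ the difference $\ell\cdot x - x \in \mathfrak{l}_n(k)^*$ lies in $\{\mathrm{ad}(X)h : X \in \mathfrak{n}_n(k)\}$ and that this cocycle-type equation $\mathrm{ad}(X)h = \ell\cdot x - x$ has a unique solution $X$. Existence follows because $\ell$ fixes the class $[x] = h + x'$ mod $N_n(k)$ by $L_n(k)^h$-equivariance of the quotient map, so $\ell\cdot x$ and $x$ have the same image and differ by an element of the $N_n(k)$-orbit direction; uniqueness follows from the nondegeneracy computation already carried out in the proof of Proposition \ref{ob}, which identifies $\mathfrak{l}_n(k)^h$ as precisely the annihilator of $\{\mathrm{ad}(X)h\}$, so the map $\mathfrak{n}_n(k) \to \mathfrak{l}_n(k)^*/(\mathfrak{l}_n(k)^h)^{\perp}$, $X \mapsto \mathrm{ad}(X)h$, is injective. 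Everything else is bookkeeping with the semidirect-product structure $P_n(k) = L_n(k) \ltimes N_n(k)$.
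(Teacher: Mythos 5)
Your argument is essentially the paper's own proof: the projection $g=\ell n\mapsto\ell$ gives a homomorphism $\mathrm{Stab}_{P_n(k)}(x)\to\mathrm{Stab}_{L_n(k)^h}(x')$, the identification $(h+\mathfrak{l}_n(k)^*)/N_n(k)\cong h+(\mathfrak{l}_n(k)^h)^*$ from Proposition \ref{ob}.(2) supplies the $N_n(k)$-correction $n(\ell)$ and hence a section, and injectivity reduces to the fact that only $1\in N_n(k)$ fixes an element of $h+\mathfrak{l}_n(k)^*$ (the paper's ``it can be checked directly that $n=1$''). One small repair: the map you display, $\mathfrak{n}_n(k)\to\mathfrak{l}_n(k)^*/(\mathfrak{l}_n(k)^h)^{\perp}$, $X\mapsto\mathrm{ad}(X)h$, is in fact the zero map, since the annihilator computation in Proposition \ref{ob} says exactly that $\mathrm{ad}(\mathfrak{n}_n(k))h\subseteq(\mathfrak{l}_n(k)^h)^{\perp}$; what you need, and what that computation does give via the dimension count $\dim\mathrm{ad}(\mathfrak{n}_n(k))h=\dim\mathfrak{l}_n(k)-\dim\mathfrak{l}_n(k)^h=n-1=\dim\mathfrak{n}_n(k)$, is that $X\mapsto\mathrm{ad}(X)h$ is injective as a map into $\mathfrak{l}_n(k)^*$, which yields both the uniqueness of $n(\ell)$ and the injectivity of the projection.
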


\begin{proof}
Take any $g\in \mathrm{Stab}_{P_n(k)}(x)$, then $g=ln$, $l\in L_n(k)$, $n\in N_n(k)$. As $n$ stabilizes $h|_{\mathfrak{n}_n(k)}$ and  $\mathfrak{l}_n(k)^*$, $l$ stabilizes $h|_{\mathfrak{n}_n(k)}$, hence also stabilizes $h$. As a result, we get a group morphism
\[\phi: \mathrm{Stab}_{P_n(k)}(x) \to \mathrm{Stab}_{L_n(k)^h}(x'), g=ln \mapsto l .\]

We claim that $\phi$ is an isomorphism.
If $l \in \mathrm{Stab}_{L_n(k)^h}(x')$, Proposition \ref{ob} shows
\[\mathfrak{l}_n(k)^*/\mathrm{ad}(\mathfrak{n}_n(k))h=(L_n(k)^h)^*,\]
and one can choose an $n_l\in N_n(k)$, such that $ln_l\in \mathrm{Stab}_{P_n(k)}(x) $. So we get a group morphism
\[\psi: \mathrm{Stab}_{L_n(k)^h}(x') \to \mathrm{Stab}_{P_n(k)}(x), l\mapsto ln_l.\]

It is obvious that $\phi\circ\psi=1$, and it remains to check that $\phi$ is injective. Let $ln\in \ker(\phi)$, then $n$ stabilizes $h$. It can be checked directly that $n=1$.
\end{proof}

\begin{Proposition}\label{st2}
Let $x$ be an element in $(\mathfrak{l}_n(k))^*$, then $\mathrm{Stab}_{P_n(k)}(x)=\mathrm{Stab}_{L_n(k)}(x)\ltimes N_n(k)$.
\end{Proposition}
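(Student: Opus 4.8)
The plan is to use the semidirect decomposition $P_n(k)=L_n(k)\ltimes N_n(k)$ together with the defining property of $\mathfrak{l}_n(k)^*\subseteq\mathfrak{p}_n(k)^*$, namely that it consists of the functionals on $\mathfrak{p}_n(k)$ vanishing on $\mathfrak{n}_n(k)$. First I would dispatch the inclusion $N_n(k)\subseteq\mathrm{Stab}_{P_n(k)}(x)$. For $X\in\mathfrak{n}_n(k)$ and $Y\in\mathfrak{p}_n(k)$, since $\mathfrak{n}_n(k)$ is an ideal we have $(\mathrm{ad}X)^j Y\in\mathfrak{n}_n(k)$ for every $j\ge 1$, hence $\mathrm{Ad}(\exp(-X))(Y)-Y\in\mathfrak{n}_n(k)$; as $x$ kills $\mathfrak{n}_n(k)$ this gives $(\exp(X)\cdot x)(Y)=x(Y)$, so $\exp(X)$ fixes $x$, and $N_n(k)=\exp\mathfrak{n}_n(k)$ does too.

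Next I would record a compatibility statement: for $l\in L_n(k)$, the action of $l$ on $x$ regarded as an element of $\mathfrak{p}_n(k)^*$ coincides with the coadjoint action of $l$ on $x$ regarded as an element of the dual of $\mathfrak{l}_n(k)$. This is because $\mathrm{Ad}(l)$ preserves both $\mathfrak{l}_n(k)$ (a subalgebra) and $\mathfrak{n}_n(k)$ (an ideal), hence preserves the splitting $\mathfrak{p}_n(k)=\mathfrak{l}_n(k)\oplus\mathfrak{n}_n(k)$, and $x$ vanishes on the second summand. Consequently, for $l\in L_n(k)$ one has $l\cdot x=x$ in $\mathfrak{p}_n(k)^*$ if and only if $l\in\mathrm{Stab}_{L_n(k)}(x)$.

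With these two facts the proposition follows formally. Given $g\in\mathrm{Stab}_{P_n(k)}(x)$, write $g=ln$ uniquely with $l\in L_n(k)$, $n\in N_n(k)$; from $n\cdot x=x$ we get $l\cdot x=x$, so $l\in\mathrm{Stab}_{L_n(k)}(x)$ and $g\in\mathrm{Stab}_{L_n(k)}(x)\,N_n(k)$. Conversely, any such product fixes $x$ by the first step. Since $N_n(k)$ is normal in $P_n(k)$ and $\mathrm{Stab}_{L_n(k)}(x)\cap N_n(k)\subseteq L_n(k)\cap N_n(k)=\{1\}$, this realizes $\mathrm{Stab}_{P_n(k)}(x)$ as the internal semidirect product $\mathrm{Stab}_{L_n(k)}(x)\ltimes N_n(k)$. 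I do not anticipate any genuine difficulty: the argument is the standard stabilizer computation for a semidirect product acting on a quotient-type module, and the only point requiring a line of care is the identification of the $L_n(k)$-module structure on $\mathfrak{l}_n(k)^*\subseteq\mathfrak{p}_n(k)^*$ with the coadjoint one.
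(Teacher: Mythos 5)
Your argument is correct and is exactly the direct verification the paper leaves to the reader (its proof of this proposition is just ``easy to check directly''): $N_n(k)$ acts trivially on the annihilator of $\mathfrak{n}_n(k)$ since $\mathfrak{n}_n(k)$ is an ideal, the $L_n(k)$-action on $\mathfrak{l}_n(k)^*\subset\mathfrak{p}_n(k)^*$ is the coadjoint one, and the semidirect decomposition $g=ln$ then yields the stabilizer as $\mathrm{Stab}_{L_n(k)}(x)\ltimes N_n(k)$. No gaps; the compatibility remark identifying the two possible readings of $\mathrm{Stab}_{L_n(k)}(x)$ is the right point to make explicit.
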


\begin{proof}
It is easy to check directly.
\end{proof}

In conclusion, we get the following result.

\begin{Theorem}\label{obs}
Every element of $\mathfrak{p}_n(k)^*$ is $P_n(k)$-conjugated to an element of the form $\alpha=\mathrm{pr}'(\xi)$, where
$\xi=\left(\begin{array}{cc}A&\\ &J_j\end{array}\right),$
where $A\in \mathfrak{g}_{n-j}(k)$, and
\[J_j=\left(\begin{array}{cccc}  0&&& \\ 1&\ddots&& \\ &\ddots &\ddots& \\ &&1&0 \end{array}\right)\in \mathfrak{g}_j(k).\]
The stabilizer $\mathrm{Stab}_{P_n(k)}(\alpha)$ is isomorphic to $\mathrm{Stab}_{G_{n-j}(k)}(A)\ltimes N_{n-j+1}(k)$.
\end{Theorem}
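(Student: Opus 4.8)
The plan is to argue by induction on $n$, with the three preceding propositions serving as the recursive engine. For $n=1$ the group $P_1(k)$ and its Lie algebra are trivial, so there is nothing to prove (one takes $j=1$ and $A$ the empty matrix). For the inductive step one fixes $x\in\mathfrak{p}_n(k)^*$ and invokes the dichotomy of Proposition \ref{ob}.(1): either $x|_{\mathfrak{n}_n(k)}=0$ or $x|_{\mathfrak{n}_n(k)}\neq 0$.

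In the first alternative $x\in\mathfrak{l}_n(k)^*$. A direct trace computation shows that $\mathfrak{l}_n(k)^*$ is exactly the image under $\mathrm{pr}'_n$ of the block matrices $\mathrm{diag}(A,0)$ with $A\in\mathfrak{g}_{n-1}(k)$, and that $L_n(k)\cong G_{n-1}(k)$ acts on this set by conjugation on $A$. Hence $x$ is $P_n(k)$-conjugate to $\mathrm{pr}'_n(\xi)$ with $\xi=\mathrm{diag}(A,J_1)$, i.e. the case $j=1$, and Proposition \ref{st2} gives $\mathrm{Stab}_{P_n(k)}(x)\cong\mathrm{Stab}_{G_{n-1}(k)}(A)\ltimes N_n(k)$, which is the asserted form since $n-j+1=n$ here.

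In the second alternative, after conjugating we may assume $x\in h+\mathfrak{l}_n(k)^*$ by Proposition \ref{ob}.(1), and, normalizing the $L_n(k)$-action on $\mathfrak{n}_n(k)^*\neq 0$, we may take $h=\mathrm{pr}'_n(E_{n,n-1})$ where $E_{n,n-1}$ is the matrix unit with a single $1$ in the $(n,n-1)$ entry. A short computation identifies $L_n(k)^h$ with $P_{n-1}(k)$ sitting as the upper-left $(n-1)\times(n-1)$ block of $L_n(k)\subset P_n(k)$, so that $(\mathfrak{l}_n(k)^h)^*\cong\mathfrak{p}_{n-1}(k)^*$ compatibly with $\mathrm{pr}'_{n-1}$; this is Proposition \ref{ob}.(2)--(3). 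Let $x'=[x]-h$, where $[x]$ is the image of $x$ in $h+(\mathfrak{l}_n(k)^h)^*$ as in Proposition \ref{ob}.(2). Applying the inductive hypothesis to $x'$, viewed in $\mathfrak{p}_{n-1}(k)^*$, we may conjugate inside $L_n(k)^h\cong P_{n-1}(k)$ so that $x'=\mathrm{pr}'_{n-1}(\mathrm{diag}(A,J_{j'}))$ with $A\in\mathfrak{g}_{(n-1)-j'}(k)$. The crucial point is then that the element $\xi:=\mathrm{diag}(A,J_{j'+1})\in\mathfrak{g}_n(k)$ — whose upper-left $(n-1)\times(n-1)$ block is $\mathrm{diag}(A,J_{j'})$ and which carries the extra subdiagonal $1$ in position $(n,n-1)$ coming precisely from $E_{n,n-1}$ — satisfies $\mathrm{pr}'_n(\xi)\in h+\mathfrak{l}_n(k)^*$ with $[\mathrm{pr}'_n(\xi)]-h=x'$; consequently $\mathrm{pr}'_n(\xi)$ is $N_n(k)$-conjugate, hence $P_n(k)$-conjugate, to $x$. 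This gives the normal form with $j=j'+1$, and indeed $A\in\mathfrak{g}_{n-j}(k)$. Finally Proposition \ref{st1} together with the inductive hypothesis yields
\[\mathrm{Stab}_{P_n(k)}(\mathrm{pr}'_n(\xi))=\mathrm{Stab}_{L_n(k)^h}(x')\cong\mathrm{Stab}_{G_{(n-1)-j'}(k)}(A)\ltimes N_{(n-1)-j'+1}(k),\]
and the identities $(n-1)-j'=n-j$ and $(n-1)-j'+1=n-j+1$ close the induction.

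The main obstacle will be the bookkeeping in this last case: one must pin down the isomorphism $(\mathfrak{l}_n(k)^h)^*\cong\mathfrak{p}_{n-1}(k)^*$ concretely — via the explicit choice of $h$ and of the embedding $P_{n-1}(k)\hookrightarrow G_n(k)$ — and then verify by trace computations that the $P_{n-1}(k)$-normal form $\mathrm{diag}(A,J_{j'})$, once the distinguished direction $h$ is reinstated, assembles into the $P_n(k)$-normal form $\mathrm{diag}(A,J_{j'+1})$; that is, that re-incorporating the unipotent radical lengthens the nilpotent Jordan block by exactly one, all the while keeping the indices on $G_{\bullet}(k)$ and $N_{\bullet}(k)$ consistent. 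Everything else is a formal consequence of Propositions \ref{ob}, \ref{st1} and \ref{st2}.
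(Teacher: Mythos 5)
Your proposal is correct and follows essentially the same route as the paper's own (much terser) proof: the same choice of $h=\mathrm{pr}'(E_{n,n-1})$, the dichotomy and reduction to $L_n(k)^h\cong P_{n-1}(k)$ from Proposition \ref{ob}, induction on $n$ for the normal form, and Propositions \ref{st1} and \ref{st2} for the stabilizer. The only difference is that you spell out the bookkeeping (in particular the verification that reinstating $h$ turns $\mathrm{diag}(A,J_{j'})$ into $\mathrm{diag}(A,J_{j'+1})$ up to $N_n(k)$-conjugacy) which the paper leaves implicit, and that checks out.
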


\begin{proof}
Choose $h=\mathrm{pr}'(\left(\begin{array}{ccc} 0_{(n-1)\times (n-2)}&0_{(n-1)\times 1}& 0_{(n-1)\times 1}\\ 0_{1\times (n-2)}&1&0\end{array}\right))$, applying Proposition \ref{ob}, we get that every element of $\mathfrak{p}_n(k)^*$ is $P_n(k)$-conjugated to $\mathrm{pr}'(\left(\begin{array}{cc}A&\\ &0\end{array}\right))$ for some $A\in \mathfrak{g}_{n-1}(k)$ or $\mathrm{pr}'(\left(\begin{array}{cc}A&\\ &0\end{array}\right))+h$, where $A\in \bar{\mathfrak{p}}_{n-1}(k)$. Using the result of Proposition \ref{ob} and by induction, one can prove the first statement. Applying Proposition \ref{st1} and \ref{st2}, we can get the stabilizer as the statement.
\end{proof}

\begin{Definition}\label{depth}
Define the depth of such element in $\mathfrak{p}_n(k)^*$ as $j$. It is well defined by the proof of Theorem \ref{obs}.
\end{Definition}

\begin{Corollary}
The elements with depth $n$ form a unique dense open $P_n(k)$-orbit in $\mathfrak{p}_n(k)^*$ and they are all strongly regular elements in $\mathfrak{p}_n(k)^*$.
\end{Corollary}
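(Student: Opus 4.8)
The plan is to read off everything from Theorem \ref{obs} and its stabilizer formula.

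\emph{Uniqueness and openness.} By Theorem \ref{obs} an element of depth $n$ is $P_n(k)$-conjugate to $\mathrm{pr}'(\xi)$ with $\xi=\mathrm{diag}(A,J_n)$ and $A\in\mathfrak{g}_0(k)=0$, so $\xi=J_n$ and the representative is forced; hence all depth-$n$ elements form the single orbit $\mathcal{O}:=P_n(k)\cdot\mathrm{pr}'(J_n)$, and every other $P_n(k)$-orbit has depth $<n$ (and, having a stabilizer that contains scalars, has dimension $<n(n-1)$, so cannot be open). Again by Theorem \ref{obs}, $\mathrm{Stab}_{P_n(k)}(\mathrm{pr}'(J_n))\cong\mathrm{Stab}_{G_0(k)}(A)\ltimes N_1(k)$, in which both factors are trivial; so the stabilizer is trivial and $\dim\mathcal{O}=\dim P_n(k)=n(n-1)=\dim\mathfrak{p}_n(k)^*$. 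A $P_n(k)$-orbit is an immersed submanifold of $\mathfrak{p}_n(k)^*$, and one of full dimension is open; thus $\mathcal{O}$ is open, and its complement $Z:=\mathfrak{p}_n(k)^*\setminus\mathcal{O}$ is closed and $P_n(k)$-invariant.

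\emph{Density.} It remains to see that $Z$ has empty interior. Over $k=\mathbb{C}$ this is automatic: $\mathfrak{p}_n(\mathbb{C})^*$ is an irreducible variety and $\mathcal{O}$, being exactly the Zariski-open locus of points whose coadjoint orbit has maximal dimension, is a nonempty Zariski-open subset, hence dense. Over $k=\mathbb{R}$ one notes that for $f\in\mathfrak{p}_n(\mathbb{R})^*$ the centralizer $\mathfrak{p}_n(\mathbb{R})(f)$ is a real form of $\mathfrak{p}_n(\mathbb{C})(f)$, so $\mathcal{O}$ is the real locus of the complex dense open orbit; then $Z$ is contained in the real points of a proper Zariski-closed set, i.e.\ in the zero set of a nonzero real polynomial, which has empty interior. (Alternatively, one may avoid complexification and induct on $n$ using the $P_n(k)$-equivariant affine fibration $\mathfrak{p}_n(k)^*\to\mathfrak{n}_n(k)^*$ of Proposition \ref{ob}: the depth-$\geq1$ locus is the open dense preimage of $\mathfrak{n}_n(k)^*\setminus\{0\}$, every such element is conjugate into the slice $h+\mathfrak{l}_n(k)^*$ which Proposition \ref{ob}(2)--(3) identifies, after dividing by $N_n(k)$, with $h+\mathfrak{p}_{n-1}(k)^*$ in a depth-lowering way, and the inductive hypothesis together with sweeping the slice around by $P_n(k)$ yields density of $\mathcal{O}$ in the depth-$\geq1$ locus, hence in $\mathfrak{p}_n(k)^*$.) I expect this density step, and carrying it out cleanly over $\mathbb{R}$ in particular, to be the only real point of care.

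\emph{Strong regularity.} Each $f\in\mathcal{O}$ has a coadjoint orbit of the maximal dimension $\dim\mathfrak{p}_n(k)^*$, so $f$ is regular. Since $\mathrm{Stab}_{P_n(k)}(f)$ is trivial, the centralizer $\mathfrak{p}_n(k)(f)=\mathrm{Lie}\bigl(\mathrm{Stab}_{P_n(k)}(f)\bigr)=0$, hence its reductive factor $\mathfrak{s}(f)=0$. Every regular element of $\mathfrak{p}_n(k)^*$ likewise has a $0$-dimensional stabilizer and thus vanishing reductive factor; so all regular elements have reductive factor of the same (maximal) dimension $0$, and therefore every regular element --- in particular every depth-$n$ element --- is strongly regular. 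This completes the proof; uniqueness of the dense open orbit is then automatic, since two disjoint dense open sets cannot coexist.
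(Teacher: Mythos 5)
Your proof is correct and takes essentially the same route as the paper: everything is read off Theorem \ref{obs}, namely that depth $n$ is equivalent to trivial (hence minimal-dimensional) stabilizer, which is exactly the paper's one-line argument; you merely spell out the openness/density step (full-dimensional orbit, Zariski-openness of the minimal-stabilizer locus, with the real case handled via the complexified orbit) and the observation that every regular element has zero Lie-algebra stabilizer and hence zero reductive factor, so strong regularity is automatic. (One harmless quibble: in your parenthetical inductive alternative, the preimage of $\mathfrak{n}_n(k)^*\setminus\{0\}$ is the depth-$\geq 2$ locus, since elements of $\mathfrak{l}_n(k)^*$ already have depth $1$ in the paper's convention.)
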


\begin{proof}
Applying Theorem \ref{obs}, we have $\alpha\in \mathfrak{p}_n(k)^*$ has depth $n$ if and only if $\mathrm{Stab}_{P_n(k)}(\alpha)$ has minimal dimension, if and only if $\mathrm{Stab}_{P_n(k)}(\alpha)=1$. So we get the statement immediately.
\end{proof}

\begin{Example}
Set
\[B= \left(\begin{array}{cccc}  a_1&&& \\ &\ddots&& \\ &&a_{n-1}& \\ b_1 & \cdots & b_{n-1}&0 \end{array}\right)\in \mathfrak{g}_n(k),\]
$a_i\neq a_j$ when $1\leq i\neq j\leq n-1$, $b_i\neq 0,$ when $1\leq i\leq n-1$, and $f=\mathrm{pr}'(B)\in \mathfrak{p}_n(k)^*$. Then $f$ has depth $n$.
\end{Example}

\begin{proof}
It is sufficient to check that $\mathrm{Stab}_{P_n(k)}(f)=\{1\}.$ See Liu-Yu \cite{LY} for a proof. An alternative proof is given in Lemma \ref{st} and Lemma \ref{nst}.
\end{proof}

\section{Geometry of the moment map $\mathrm{p}:\mathcal{O}_f\to \mathfrak{p}^*$}\label{sec-3}

\subsection{Moment map in the $\mathrm{GL}_n(\mathbb{C})$ case }

Let $n\in \mathbb{Z}_+$, let $G=G_n(\mathbb{C})$ and let $P=P_n(\mathbb{C})$ be the mirabolic subgroup of $G$. $\mathfrak{g}$ (resp. $\mathfrak{p}$) denotes the Lie algebra of $G$ (resp. of $P$).

Let $\mathfrak{g}^*$ (resp. $\mathfrak{p}^*$) denote the dual space of $\mathfrak{g}$ (resp. of $\mathfrak{p}$).
By the Lie theory, we have
\[n=\min_{x\in \mathfrak{g}^*} \dim (\mathrm{Stab}_G(x)).\]
For any element $f\in \mathfrak{g}^*$, we call $f$  a regular element in $\mathfrak{g}^*$, or say that $f$ is regular, if $\dim(\mathrm{Stab}_G(f))=n$. And we call a $G$-coadjoint orbit $\mathcal{O}_f$  a regular orbit if $f$ is regular.

We will calculate the moment map $\mathrm{p}: \mathcal{O}_f \to \mathfrak{p}^*$ for any $f\in \mathfrak{g}^*$.
We find that the result about any regular element $f\in\mathfrak{g}^*$ is similar to the results about semisimple regular elements, for example, $\mathrm{p}(\mathcal{O}_f)$ contains the unique strongly regular orbit of $\mathfrak{p}^*$ (see Liu-Yu \cite{LY}). Using the results about the regular elements, we can settle the cases of singular elements in $\mathfrak{g}^*$.

Furthermore, using the results of the case of $\mathfrak{g}_n(\mathbb{C})$, one can settle the case of $\mathfrak{g}_n(\mathbb{R})$ immediately.

\subsubsection{Regular orbits}

Write
\[J_s(a)=\left(\begin{array}{cccc} a& &&\\ 1&a &&\\ &\ddots &\ddots &\\ && 1& a\end{array}\right)\in \mathfrak{g}_s(\mathbb{C}), s\in \mathbb{Z}\ \text{and}\ s\geq 2, a\in \mathbb{C}, \]
and  $J_1(a)=a_{1\times 1}, a\in \mathbb{C}$. Set $J_s=J_s(0)$, $\forall s\in \mathbb{Z}_+$.

Let $a_i\in \mathbb{C}$, $s_i\in \mathbb{Z}_+$, $\forall 1\leq i\leq m$, such that $a_i\neq a_j$ when $1\leq i\neq j\leq m$, and $\sum_{i=1}^ms_i=n$. Set
\[\xi=\mathrm{diag}(J_{s_1}(a_1),\cdots,J_{s_m}(a_m))\in \mathfrak{g},\]
and set $f=\mathrm{pr}(\xi)\in \mathfrak{g}^*$. It is clear that every regular $G$-coadjoint orbit in $\mathfrak{g}^*$ is of the form $\mathcal{O}_f$.

Firstly, we describe all $P$-orbits in $\mathcal{O}_f$.

For any $s\in \mathbb{Z}_+$, let
\[L_s=\mathrm{Stab}_{G_s(\mathbb{C})}(J_s)=\{\left(\begin{array}{cccc} x_1&&&\\ x_2& x_1 &&\\ \vdots& \ddots & \ddots &\\ \ x_s &\cdots &x_2&x_1\end{array}\right) |\  x_i\in \mathbb{C},1\leq i\leq s,  x_1\neq 0\},\]
Set
\[L=\mathrm{Stab}_G(f)=\{\mathrm{diag}(X_{s_1},\cdots,X_{s_m})\ |\ X_{s_i}\in L_{s_i}, 1\leq i\leq m\}.\]
We have $\mathcal{O}_f\cong G/L$ and $P\setminus \mathcal{O}_f \cong P\setminus G/L$.

Define the right action of $G $ on $\mathbb{C}^n$ by
\[v\cdot g=(x_1,\cdots,x_n) A, \]
where $v=(x_1,\cdots,x_n)^t\in \mathbb{C}^n$ and $g=A\in G_n(\mathbb{C})$. Set $v_0=(0,\cdots,0,1)^t$, we get $\mathrm{Stab}_G(v_0)=P$, and $\mathbb{C}^n- \{0\} \cong P\setminus G$.

To find the representatives of $P\setminus \mathcal{O}_f$, we need to find the representatives of $(\mathbb{C}^n- \{0\})/L$.

\begin{Proposition}
Set $I_0=\{1,2,\cdots,n\}, K_0=\{1,\cdots,m\}$ and $s_0=0$. For any $\emptyset\neq K\subseteq K_0$ and $r_k\in \{1,2,\cdots,s_k\}$ for $k\in K$, define
\[I=\{\sum_{j=0}^{k-1}s_j+r_k | \ k\in K\}\subseteq I_0\]
attached to $K$ and $\{r_k, k\in K\}$.
Define $v_I=(x_1,\cdots,x_n)$ by $x_i=1$ when
$i\in I$ and $x_i=0$ when $i\notin I$. Let $I_a$ be the set of all $I$'s  constructed as above.

Then $\{v_I | \ I\in I_a\}$ form all different representatives of $(\mathbb{C}^n- \{0\})/L$.
\end{Proposition}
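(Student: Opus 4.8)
The plan is to reduce the statement, via the identifications already set up ($\mathcal{O}_f\cong G/L$, hence $P\setminus\mathcal{O}_f\cong P\setminus G/L$, together with $\mathbb{C}^n-\{0\}\cong P\setminus G$ through $v_0$), to a direct description of the orbit space $(\mathbb{C}^n-\{0\})/L$ for the right action $w\mapsto wA$. Since $L=\{\mathrm{diag}(X_{s_1},\dots,X_{s_m}) : X_{s_i}\in L_{s_i}\}$ is block diagonal, writing a vector $w$ in blocks $w=(w^{(1)},\dots,w^{(m)})$ with $w^{(k)}\in\mathbb{C}^{s_k}$ according to $n=s_1+\cdots+s_m$, an element of $L$ acts on $w$ block by block. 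So I would first handle the single-block action of $L_s$ on $\mathbb{C}^s$, then recombine.

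The heart of the argument is the claim that every $L_s$-orbit on $\mathbb{C}^s-\{0\}$ contains exactly one vector $e_r=(0,\dots,0,1,0,\dots,0)$ (the $1$ in position $r$), $1\le r\le s$. To prove it I would use the explicit shape of $L_s$: each $X\in L_s$ is $X=x_1I+x_2J_s+\cdots+x_sJ_s^{s-1}$ with $x_1\neq0$, and right multiplication by $J_s$ is the left shift $(y_1,\dots,y_s)\mapsto(y_2,\dots,y_s,0)$, so for $w=(y_1,\dots,y_s)$ one has $(wX)_j=x_1y_j+x_2y_{j+1}+\cdots+x_{s-j+1}y_s$. Letting $r=r(w)$ be the position of the last nonzero entry of $w\neq0$, this shows $(wX)_r=x_1y_r\neq0$ and $(wX)_j=0$ for $j>r$, so $r(w)$ is an orbit invariant; and the equations $(wX)_r=1$, $(wX)_{r-1}=\cdots=(wX)_1=0$ form a triangular system solved recursively by $x_1=y_r^{-1}$ and then $x_{t+1}$ in terms of $x_1,\dots,x_t$, with $x_{r+1},\dots,x_s$ free (set them to $0$); the resulting $X$ lies in $L_s$ because $x_1\neq0$, and $wX=e_r$.

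To recombine, given $w=(w^{(1)},\dots,w^{(m)})\neq0$ I would set $K=\{k:w^{(k)}\neq0\}$, which is nonempty, and $r_k=r(w^{(k)})\in\{1,\dots,s_k\}$ for $k\in K$; normalizing each nonzero block by the single-block case moves $w$ into $v_I$ with $I=\{\sum_{j=0}^{k-1}s_j+r_k:k\in K\}$, which is exactly the index set the statement attaches to $(K,\{r_k\})$, so $I\in I_a$. Conversely the conditions ``$w^{(k)}=0$'' and the invariant $r(w^{(k)})$ are preserved by $L$, so the orbit determines the pair $(K,\{r_k\})$, hence determines $I$ (note $I$ meets each block in at most one point, so $(K,\{r_k\})$ can be read back off $I$). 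Thus the vectors $v_I$, $I\in I_a$, are pairwise $L$-inequivalent and exhaust $(\mathbb{C}^n-\{0\})/L$; unwinding the identifications gives the proposition.

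The only step with genuine content is the single-block claim, and even there the one thing to check is that the triangular system for $x_1,\dots,x_r$ has a solution with $x_1\neq0$, which is automatic since $x_1=y_r^{-1}$. I therefore expect the main difficulty to be merely bookkeeping: keeping the block partition straight and correctly converting a within-block position $r_k$ to the global position $\sum_{j=0}^{k-1}s_j+r_k$.
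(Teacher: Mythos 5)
Your proof is correct and is essentially the paper's argument in expanded form: the paper simply writes down the $L$-orbit of $v_I$ (vectors whose last nonzero entry in each block $k\in K$ sits at position $r_k$, with the other blocks zero), which is exactly the content of your block-by-block computation with the lower-triangular Toeplitz matrices, the last-nonzero-position invariant, and the triangular normalization. No gap; your version just makes the exhaustion and distinctness steps explicit.
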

\begin{proof}
The $L$-orbit of $v_I$ is the set
\begin{align*}\{(y_1,\cdots,y_n)^t\ &|\ y_i\in\mathbb{C}, 1\leq i\leq n; y_i\neq 0\ \text{if}\ i\in I; y_i=0\ \text{if}\ i\notin J, \\& J=\bigsqcup_{k\in K}\{\sum_{j=0}^{k-1}s_j+1,\cdots,\sum_{j=0}^{k-1}s_j+r_k\}\}.\end{align*}
It follows that $\{v_I |\ I\in I_a\}$ are all different representatives of $(\mathbb{C}^n- \{0\})/L$.
\end{proof}

Let $g_I\in G$ be the element corresponding to the representative $v_I$ under the isomorphism $\mathbb{C}^n- \{0\} \cong P\setminus G$, then
\[G=\bigsqcup_{I\in I_a}Pg_IL.\]
So there are $\# I_a=\prod_{1\leq i\leq m}(s_i+1)-1$ $P$-orbits in $\mathcal{O}_f$,
\[\{P\cdot (g_I\cdot f)\ |\ I\in I_a\}.\]

Set $S=\{s_1,s_1+s_2,\cdots,n\}$, then $Pg_{S}L$ is the unique dense open subset of $G$ among $\{Pg_IL,I\in I_a\}$, since $v_S\cdot L$ is the unique dense $L$-orbit in $\mathbb{C}^n-\{0\}$. As a consequence, $P\cdot(g_S\cdot f)$ is the unique dense open $P$-orbit in $\mathcal{O}_f$.

By definition, we can choose $\{g_I | \ I\in I_a\}$  as follows.

When $n\in I$, $g_I=\left(\begin{array}{cc} I_{n-1}& 0\\ \beta& 1\end{array}\right)$, $\beta=(x_1,\cdots,x_{n-1})$ is defined by $x_i=1$ when $i\in I-\{n\}$ and $x_i=0$ when $i\notin I-\{n\}$.

When $n\notin I$, let $k=\max\{i: i\in I\}$, $g_I=\left(\begin{array}{ccc} I_{k-1}&&\\ && I_{n-k}\\ \beta& 1 & \end{array}\right)$, $\beta=(x_1,\cdots,x_{k-1})$ is defined by $x_i=1$ when $i\in I-\{k\}$ and $x_i=0$ when $i\notin I-\{k\}$.

We go to calculate the image of each $P$-orbit of $g_I\cdot f$ under the moment map $\mathrm{p}: \mathcal{O}_f\to \mathfrak{p}^*$ and find that $\mathrm{p}(\mathcal{O}_f)$ contains the unique dense open $P$-orbit in $\mathfrak{p}^*$, more precisely, $\mathrm{p}(g_S\cdot f)$ is a strongly regular element in $\mathfrak{p}^*$.

\begin{Lemma}\label{st}
We have $\mathrm{Stab}_P(g_S\cdot f)=1$, so the dimension of $P$-orbit of $g_S\cdot f$ equals to $\mathrm{dim} P=n^2-n$.
\end{Lemma}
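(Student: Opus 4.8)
The plan is to compute $\mathrm{Stab}_P(g_S\cdot f)$ directly by unwinding what it means for $p\in P$ to fix $g_S\cdot f$, and to show the only solution is $p=1$. First I would note that $\mathrm{Stab}_P(g_S\cdot f) = P\cap g_S L g_S^{-1}$, where $L=\mathrm{Stab}_G(f)$ is the block-diagonal group $\mathrm{diag}(X_{s_1},\dots,X_{s_m})$ with each $X_{s_i}\in L_{s_i}$ a lower-triangular Toeplitz block with nonzero diagonal entry. So the task is to determine which elements of this conjugate of $L$ lie in the mirabolic subgroup $P$, i.e.\ have last row $(0,\dots,0,1)$.

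The key computational step is to make the conjugation $g_S L g_S^{-1}$ explicit. Recall $S=\{s_1, s_1+s_2,\dots,n\}$, so $v_S$ has a $1$ in the last coordinate of each Jordan block; in particular $n\in S$, hence $g_S=\left(\begin{smallmatrix} I_{n-1}&0\\ \beta&1\end{smallmatrix}\right)$ where $\beta$ picks out the positions $\{s_1,s_1+s_2,\dots,s_1+\dots+s_{m-1}\}$. Conjugating a block-lower-triangular-Toeplitz matrix $X=\mathrm{diag}(X_{s_1},\dots,X_{s_m})$ by this $g_S$ only modifies the last row. I would write out the last row of $g_S X g_S^{-1}$: it equals $(\text{last row of }X) g_S^{-1} + \beta(X - X_{\text{relevant part}})\cdots$; more concretely one computes $g_S X g_S^{-1} = X + (\text{rank-one correction in the last row})$. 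Demanding that this last row be $(0,\dots,0,1)$ forces, block by block, a system of linear equations on the Toeplitz parameters of each $X_{s_i}$. The crucial point is that $\beta$ has a nonzero entry hitting the \emph{bottom} row of each block $X_{s_i}$ for $i<m$ (because $v_S$'s support meets every block at its last index), and the Toeplitz structure then propagates these constraints up the block: the bottom-row entries of $X_{s_i}$ being constrained together with the Toeplitz relations successively kills $x_1^{(i)}-$type coefficients, forcing each $X_{s_i}=I_{s_i}$, and finally the last block together with the last-coordinate normalization gives $X_{s_m}=I_{s_m}$ as well. Hence $X=I_n$ and $p=1$.

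I would organize this as: (a) identify $\mathrm{Stab}_P(g_S\cdot f)=P\cap g_S L g_S^{-1}$; (b) compute the last row of $g_S X g_S^{-1}$ in terms of the entries of $\beta$ and the Toeplitz parameters of the blocks $X_{s_i}$; (c) set that last row equal to $(0,\dots,0,1)$ and solve, using that every block of $v_S$ is "full" (ends at its block boundary) so $\beta$ reaches into each block; conclude $X=I$. The dimension statement is then immediate: $\dim(P\cdot(g_S\cdot f)) = \dim P - \dim\mathrm{Stab}_P(g_S\cdot f) = \dim P = n^2-n$.

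The main obstacle I anticipate is purely bookkeeping: keeping track of exactly which entries of $\beta$ are nonzero and how conjugation by $g_S$ mixes the last row of $X$ with the sub-diagonal Toeplitz entries of each block, so that the linear system one extracts is visibly triangular (hence uniquely solvable by $X=I$). There is no conceptual difficulty — it is the combinatorics of the support set $S$ interacting with the Toeplitz form of $L$ — but writing it cleanly for general partitions $(s_1,\dots,s_m)$ requires care. An alternative, possibly cleaner route is to argue inductively on $m$ using Proposition \ref{st1} and Proposition \ref{st2}: peeling off the last Jordan block $J_{s_m}$ exhibits $g_S\cdot f$ as an element of "depth $s_m$" whose stabilizer is the stabilizer of the residual data for $\mathrm{diag}(J_{s_1}(a_1),\dots,J_{s_{m-1}}(a_{m-1}))$ in $P_{n-s_m+1}$, and one recurses; the base case and the transitivity of $L_s$ on the relevant affine space make each step collapse. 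This matches the reference to Liu--Yu \cite{LY} and to the later Lemma \ref{nst}.
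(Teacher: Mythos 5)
Your proposal is correct and takes essentially the same approach as the paper: both start from $\mathrm{Stab}_P(g_S\cdot f)=P\cap g_S\,\mathrm{Stab}_G(f)\,g_S^{-1}$ and reduce to an elementary block computation with the lower-triangular Toeplitz blocks of $L$, which only the identity survives. The paper merely repackages your last-row condition on $g_SXg_S^{-1}$ as the equivalent statement $\mathrm{Stab}_L(v_S)=1$, using that $g_S^{-1}Pg_S$ is the stabilizer of $v_S=v_0\cdot g_S$; this is the same linear system you solve.
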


\begin{proof}
It is obvious that
\[\mathrm{Stab}_P(g_S\cdot f)=P\cap g_S\mathrm{Stab}_G(f)g_S^{-1}.\]
And
\[P\cap g_S\mathrm{Stab}_G(f)g_S^{-1}=P\cap g_SLg_S^{-1}=g_S(g_S^{-1}Pg_S\cap L)g_S^{-1}.\]
Since $g_S^{-1}Pg_S$ is the stabilizer of $v_0\cdot g_S=v_S$,  we have $g_S^{-1}Pg_S\cap L=\mathrm{Stab}_L(v_S)$.

By direct calculation, $\mathrm{Stab}_L(v_S)=1$, so $\mathrm{Stab}_P(g_S\cdot f)=1$.
\end{proof}

For convenience, let $[U_1,\cdots,U_k]$ denote an $n\times n$ matrix with $k$ blocks, where the $i$-th block is an $n\times n_i$ ($1\leq n_i\leq n$) submatrix $U_i$ for $1\leq i\leq k$, and
$\sum_{i=1}^k n_i=n$.

\begin{Lemma}\label{nst}
If $x\in \mathfrak{p}^*$ is not strongly regular, then for any element $z\in \mathrm{p}'^{-1}(x)$, the $P$-orbit of $z$ has dimension $< n^2-n$. Here $\mathrm{p}': \mathfrak{g}^*\to \mathfrak{p}^*$ is the natural projection.
\end{Lemma}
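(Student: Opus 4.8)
\noindent\textit{Proof strategy.}
The plan is to carry the question over to $\mathfrak{g}$ through the $G$-module isomorphism $\mathrm{pr}\colon\mathfrak{g}\to\mathfrak{g}^*$ of Section~\ref{sec-2}. Under $\mathrm{pr}$ the coadjoint $P$-action on $\mathfrak{g}^*$ becomes the adjoint action on $\mathfrak{g}$, while $\mathrm{p}'\colon\mathfrak{g}^*\to\mathfrak{p}^*$ becomes the map $\mathrm{pr}'\colon\mathfrak{g}\to\mathfrak{p}^*$. Writing $\tilde z\in\mathfrak{g}$ for the element matching $z$, we then have $\dim(P\cdot z)=\dim P-\dim\mathrm{Stab}_{\mathfrak{p}}(\tilde z)$, where $\mathrm{Stab}_{\mathfrak{p}}(\tilde z)=\{W\in\mathfrak{p}:[W,\tilde z]=0\}$ is the Lie algebra of $\mathrm{Stab}_P(z)$, so the claim is equivalent to $\mathrm{Stab}_{\mathfrak{p}}(\tilde z)\neq 0$ for \emph{every} $\tilde z$ with $\mathrm{pr}'(\tilde z)=x$. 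Since $\mathrm{pr}'$ is $P$-equivariant and $\dim(P\cdot z)$ depends only on the $P$-conjugacy class, Theorem~\ref{obs} lets me reduce to $x=\mathrm{pr}'(\mathrm{diag}(A,J_j))$ with $A\in\mathfrak{g}_{n-j}(k)$; as $x$ is not strongly regular, $1\le j\le n-1$ (the strongly regular elements of $\mathfrak{p}^*$ being exactly those of depth $n$). Because $\ker\mathrm{pr}'$ is the space of matrices supported on the last column, the fibre $\mathrm{pr}'^{-1}(x)$ is then $\{\mathrm{diag}(A,J_j)+c\,e_n^{\,t}:c\in k^{n}\}$, where $e_n$ denotes the last standard basis vector.

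The geometric input I would exploit is that every such $\tilde z$ preserves the coordinate subspace $U=\langle e_1,\dots,e_{n-j}\rangle$: the summand $\mathrm{diag}(A,J_j)$ does so by its block shape, and $c\,e_n^{\,t}$ kills $e_1,\dots,e_{n-1}$. Dually $\tilde z^{\,t}$ preserves $V=\langle e_{n-j+1},\dots,e_n\rangle$, a subspace of dimension $j\le n-1$ which contains $e_n$. Hence $k[\tilde z^{\,t}]\,e_n\subseteq V$ has dimension $\le j<n$, so $e_n$ is not a cyclic vector for $\tilde z^{\,t}$, and there is a nonzero monic polynomial $p$ with $\deg p\le j$ and $p(\tilde z^{\,t})e_n=0$; transposing, $e_n^{\,t}p(\tilde z)=0$, i.e.\ $p(\tilde z)\in\mathfrak{p}$, and $p(\tilde z)$ commutes with $\tilde z$ automatically.

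It then remains to split into two cases. If $\tilde z$ is not a regular (cyclic) element of $\mathfrak{g}$, its centralizer in $\mathfrak{g}$ has dimension $\ge n+1$, while $\mathfrak{p}$ has codimension $n$ in $\mathfrak{g}$; hence their intersection $\mathrm{Stab}_{\mathfrak{p}}(\tilde z)$ has positive dimension at once. If $\tilde z$ is regular, then $p(\tilde z)\neq 0$ since $\deg p<n$ is strictly below the degree of the minimal polynomial of $\tilde z$, so $p(\tilde z)$ is a nonzero element of $\mathrm{Stab}_{\mathfrak{p}}(\tilde z)$ by the previous paragraph. In either case $\dim(P\cdot z)\le n^2-n-1<n^2-n$.

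The step I expect to need genuine care is precisely this regular/non-regular dichotomy: the polynomial $p$ yields a \emph{nonzero} centralizing element of $\mathfrak{p}$ only when $\tilde z$ is cyclic, whereas the inequality $\dim\mathrm{Stab}_{\mathfrak{g}}(\tilde z)>n$ is exactly what rescues the non-cyclic case; by contrast the invariance of $U$ (equivalently of $V$ under $\tilde z^{\,t}$) and the identification of $\mathrm{p}'$ with $\mathrm{pr}'$ are routine computations.
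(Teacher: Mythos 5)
Your proof is correct, and after the shared first step it takes a genuinely different route from the paper. Both arguments begin the same way: reduce, by $P$-equivariance of the projection and Theorem \ref{obs}, to $x=\mathrm{pr}'(\mathrm{diag}(A,J_j))$ with $A\in\mathfrak{g}_{n-j}(k)$ and $j\le n-1$ (depth $n$ being exactly the strongly regular locus), and reformulate the claim as positivity of $\dim\mathrm{Stab}_P(z)$. From there the paper stays in the group picture: it computes $\mathrm{Stab}_P(x)$ explicitly as $\mathrm{Stab}_{G_{n-j}(k)}(A)\ltimes N_{n-j+1}(k)$, of dimension at least $2(n-j)$, notes $\mathrm{Stab}_P(z)\subseteq\mathrm{Stab}_P(x)$, and shows that inside $\mathrm{Stab}_P(x)$ the condition of fixing $z$ is cut out by $n-j$ algebraic equations, so $\dim\mathrm{Stab}_P(z)\ge\dim\mathrm{Stab}_P(x)-(n-j)\ge n-j\ge 1$. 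You instead pass to the adjoint picture and exhibit a nonzero element of $\mathfrak{p}\cap\mathfrak{z}_{\mathfrak{g}}(\tilde z)$, where $\mathfrak{z}_{\mathfrak{g}}(\tilde z)$ is the centralizer of $\tilde z=\mathrm{pr}^{-1}(z)$: since $\tilde z^{\,t}$ preserves the $j$-dimensional subspace $\langle e_{n-j+1},\dots,e_n\rangle$ containing $e_n$, the minimal polynomial $p$ of $e_n$ under $\tilde z^{\,t}$ has degree at most $j\le n-1$, so $p(\tilde z)$ has vanishing last row, hence lies in $\mathfrak{p}$, commutes with $\tilde z$, and is nonzero when $\tilde z$ is cyclic (its minimal polynomial has degree $n>\deg p$); when $\tilde z$ is not cyclic, the standard bound $\dim\mathfrak{z}_{\mathfrak{g}}(\tilde z)\ge n+1$ together with $\mathrm{codim}_{\mathfrak{g}}\,\mathfrak{p}=n$ already forces $\mathfrak{p}\cap\mathfrak{z}_{\mathfrak{g}}(\tilde z)\neq 0$. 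Both proofs are sound; yours is coordinate-light, works verbatim over $\mathbb{R}$ and $\mathbb{C}$, and isolates the linear-algebra content (cyclic vectors and centralizer dimensions), at the cost of a case split and of giving only $\dim\mathrm{Stab}_P(z)\ge 1$, whereas the paper's explicit computation yields the sharper quantitative bound $\dim\mathrm{Stab}_P(z)\ge n-j$ and an explicit description of $\mathrm{Stab}_P(x)$, which some readers may find more informative even though the lemma needs only positivity.
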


\begin{proof}
Take any element $x=\mathrm{pr}'([X,0_{n\times 1}])\in \mathfrak{p}^*$ which is not strongly regular,
by the definition, $\mathrm{p}'^{-1}(x)=\{\mathrm{pr}([X,\tilde{Y}]), \tilde{Y}\in \mathbb{C}^n$\}. Fix an element $z\in \mathrm{p}'^{-1}(x)$, $z=\mathrm{pr}([X,Y])$ for some $Y=(y_1,\cdots,y_n)^t\in \mathbb{C}^n$.

Since the $P$-orbit of $z$ has dimension
\[\mathrm{dim} P-\mathrm{dim} (\mathrm{Stab}_P(z))=n^2-n-\mathrm{dim}(\mathrm{Stab}_P(z)),\]
we only need to prove $\mathrm{dim} (\mathrm{Stab}_P(z))\geq 1$.

Since $\{\mathrm{pr}([0,\tilde{Y}]), \tilde{Y}\in \mathbb{C}^n\}$ is a $P$-stable subspace, $\mathrm{Stab}_P(z)\subseteq \mathrm{Stab}_P(x)$.

Using the result of the classification of $P$-coadjoint orbits in $\mathfrak{p}^*$, we know that $x$ is $P$-conjugated to an element of the form $\alpha=\mathrm{pr}'(\left(\begin{array}{cc}A&\\ &J_k\end{array}\right))$, where $A\in \mathfrak{g}_{n-k}(\mathbb{C})$,
\[\mathrm{Stab}_P(\alpha)\cong \mathrm{Stab}_{G_{n-k}(\mathbb{C})}(A)\ltimes N_{n-k+1}(\mathbb{C})\]
with dimension $\geq 2(n-k)$ and $k\leq n-1$ as $x$ is not strongly regular.
It doesn't matter to replace $x$ by the elements in its $P$-orbit and we assume that $x=\alpha$.

By direct calculation, we get
\begin{align*}\mathrm{Stab}_P(x)&=\{\left(\begin{array}{cc} C &\\&I_{k}\end{array}\right) | \ C\in G_{n-k}(\mathbb{C}), CAC^{-1}=A\}\ltimes \{\left(\begin{array}{cc} I_{n-k} &N\\&I_{k}\end{array}\right)\\
& | \ N=[N_1,\cdots,N_k],\ N_i\in \mathbb{C}^{n-k}, 1\leq i\leq k, N_i=A^{i-1}N_1, 2\leq i\leq k\}.
\end{align*}

Let $t=\left(\begin{array}{cc} C &CN\\&I_{k}\end{array}\right)\in \mathrm{Stab}_P(x)$, then $t\in \mathrm{Stab}_P(z)$ if and only if
\[\left(\begin{array}{cc} C &CN\\&I_{k}\end{array}\right)(\left(\begin{array}{cc}A&\\ &J_k\end{array}\right)+[0,Y])\left(\begin{array}{cc} C^{-1} &-N\\&I_{k}\end{array}\right)=\left(\begin{array}{cc}A&\\ &J_k\end{array}\right)+[0,Y], \]
So
\[ t\in \mathrm{Stab}_P(z) \Leftrightarrow  -CA^kN_1+C\left(\begin{array}{c}y_1\\ \vdots \\y_{n-k}\end{array}\right)+CN\left(\begin{array}{c}y_{n-k+1}\\ \vdots \\ y_n\end{array}\right)=\left(\begin{array}{c}y_1\\ \vdots \\y_{n-k}\end{array}\right).\]

Since $1\in \mathrm{Stab}_P(z)$, we obtain a nonempty subvariety $\mathrm{Stab}_P(z)$ of $\mathrm{Stab}_P(x)$ given by $n-k$ algebraic equations. Therefore
\[\dim(\mathrm{Stab}_P(z)) \geq \mathrm{dim}(\mathrm{Stab}_P(x))-(n-k).\]
Since $\mathrm{dim}(\mathrm{Stab}_P(x))\geq 2(n-k)$ and $n-k\geq 1$, we get $\dim(\mathrm{Stab}_P(z))\geq 1$.
\end{proof}

\begin{Proposition}\label{gS}
$\mathrm{p}(g_S\cdot f)$ is a strongly regular element in $\mathfrak{p}^*$.
\end{Proposition}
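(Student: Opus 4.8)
The plan is to derive the statement immediately from Lemma \ref{st} and Lemma \ref{nst}, combined with the characterization (from the Corollary in Section \ref{sec-2}) of strongly regular elements of $\mathfrak{p}^*$ as exactly those with trivial $P$-stabilizer, equivalently those whose $P$-orbit has the maximal dimension $\dim P=n^2-n$. As a preliminary observation I would note that, as recalled in the introduction, the moment map $\mathrm{p}:\mathcal{O}_f\to\mathfrak{p}^*$ is nothing but the restriction to $\mathcal{O}_f\subseteq\mathfrak{g}^*$ of the natural projection $\mathrm{p}':\mathfrak{g}^*\to\mathfrak{p}^*$ occurring in Lemma \ref{nst}; in particular $\mathrm{p}(g_S\cdot f)=\mathrm{p}'(g_S\cdot f)$, and the point $z:=g_S\cdot f$ lies in the fiber $\mathrm{p}'^{-1}(x)$ over $x:=\mathrm{p}(g_S\cdot f)$.

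First I would invoke Lemma \ref{st}, which yields $\mathrm{Stab}_P(g_S\cdot f)=1$; hence the $P$-orbit of $z=g_S\cdot f$ inside $\mathfrak{g}^*$ has dimension exactly $n^2-n$, the maximal possible value. Then I would argue by contraposition using Lemma \ref{nst}: if $x=\mathrm{p}(g_S\cdot f)$ were not strongly regular in $\mathfrak{p}^*$, then Lemma \ref{nst}, applied to the element $z\in\mathrm{p}'^{-1}(x)$, would force $\dim(P\cdot z)<n^2-n$, contradicting the previous sentence. Therefore $x$ is strongly regular, which is precisely the assertion.

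The main work has already been carried out in the proofs of Lemmas \ref{st} and \ref{nst}, so there is no genuine obstacle remaining; the only points that deserve a line of care are the identification $\mathrm{p}=\mathrm{p}'|_{\mathcal{O}_f}$ of the moment map with the ambient linear projection, and the remark that Lemma \ref{nst} is stated for an \emph{arbitrary} element of an \emph{arbitrary} fiber over a non-strongly-regular point, so it does apply to the fiber through $g_S\cdot f$.
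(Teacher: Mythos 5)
Your argument is correct and is essentially the paper's own proof: Lemma \ref{st} gives the $P$-orbit of $g_S\cdot f$ dimension $n^2-n$, and the contrapositive of Lemma \ref{nst} (applied to $z=g_S\cdot f$ in the fiber of $\mathrm{p}'$ over $\mathrm{p}(g_S\cdot f)$) then forces $\mathrm{p}(g_S\cdot f)$ to be strongly regular. The only difference is that you spell out the identification $\mathrm{p}=\mathrm{p}'|_{\mathcal{O}_f}$ and the contrapositive step, which the paper leaves implicit.
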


\begin{proof}
By Lemma \ref{st},  the $P$-orbit of $g_S\cdot f$ has dimension $n^2-n$. Applying Lemma \ref{nst}, we see that $\mathrm{p}(g_S\cdot f)$ is strongly regular.
\end{proof}

For convenience, let $A(I_1,I_2)$ denote the submatrix of the matrix $A$ with rows indexed by $I_1$ and columns indexed by $I_2$, and let $a(I_1)$ denote the subvector of the vector $a$ with index $I_1$.

With the result above, we can calculate the $P$-orbit of $\mathrm{p}(g_I\cdot f)$ for any $I\in I_a$. We find out that  $\{\mathrm{p}(g_I\cdot f)\ | \ I\in I_a\}$ represent all different $P$-orbits in $\mathrm{p}(\mathcal{O}_f)$. More precisely, we have the following result.

\begin{Proposition}\label{rg}
For any $\emptyset\neq K\subseteq K_0$ and $r_k\in \{1,2,\cdots,s_k\}$ for $k\in K_0$, define $I=\{\sum_{j=0}^{k-1}s_j+r_k | \ k\in K\}$ and $g_I$ as above. Then $\mathrm{p}(g_I\cdot f)$ has depth $d=\sum_{k\in K}r_k$. Moreover, if we set
\[t_k=\left\{\begin{array}{ll} r_k, &k\in K, \\ 0, &k\in K_0-K,\end{array}\right.\]
then $\mathrm{p}(g_I\cdot f)$ is $P$-conjugated to $\mathrm{pr}'(\eta)$, where
\[ \eta= \mathrm{diag}(J_{s_1-t_1}(a_1), \cdots,J_{s_m-t_m}(a_m), J_d).\]
\end{Proposition}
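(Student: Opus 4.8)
The plan is to turn the statement into a change-of-basis computation and then read off the depth from Theorem~\ref{obs}. Since $g\cdot\mathrm{pr}(X)=\mathrm{pr}(gXg^{-1})$ and $\mathrm p$ is restriction of functionals to $\mathfrak p$, one has $\mathrm p(g_I\cdot f)=\mathrm{pr}'(g_I\,\xi\, g_I^{-1})$; for $p\in P$ the action on $\mathfrak p^*$ is $p\cdot\mathrm{pr}'(X)=\mathrm{pr}'(pXp^{-1})$, and $\ker\mathrm{pr}'$ is exactly the space of matrices supported in the last column. Hence it suffices to find $p\in P$ with $p\,g_I\,\xi\, g_I^{-1}\,p^{-1}\equiv\eta\pmod{\ker\mathrm{pr}'}$; equivalently, since $Pg_I=\{q\in G:\text{the last row of }q\text{ is }v_I\}$, to produce $q\in G$ with last row $v_I$ for which $q\xi q^{-1}$ agrees with $\eta$ off the last column.

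First I would compute $g_I\xi g_I^{-1}$ from the explicit block shape of $g_I$. When $n\in I$ one has $g_I=I+E$ with $E$ supported in the last row and $E^2=0$, so $g_I\xi g_I^{-1}=\xi+E\xi-\xi E$ differs from $\xi$ only along the last row; when $n\notin I$, with $k=\max I$, the matrix $g_I$ is the product of the cyclic coordinate permutation $(k\ n\ n{-}1\ \cdots\ k{+}1)$ with a unipotent factor, so $g_I\xi g_I^{-1}$ is $\xi$ re-indexed by that permutation and then modified by the remaining unipotent conjugation. In either case the net effect is the same: inside block $k$ (for $k\in K$) the $r_k$-th coordinate carries the ``$1$'' of $v_I$, the tail $J_{s_k-r_k}(a_k)$ below it survives untouched, and the heads of the blocks $k\in K$ (of lengths $r_k$) get spliced together into a single Jordan chain of length $d=\sum_{k\in K}r_k=\sum_k t_k$. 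It then remains to exhibit the cleanup $p\in P$: a combination of an element of $N_n(k)$ and an element of $L_n(k)$ that clears the eigenvalue contributions on the spliced chain not already lying in the last column and rescales it to $J_d$ exactly, after which what is left modulo the last column is $\mathrm{pr}'(\eta)$. The argument can equivalently be packaged as an induction on $n$ mirroring the proof of Theorem~\ref{obs}: since $d\ge 1$, $\mathrm p(g_I\cdot f)$ does not vanish on $\mathfrak n_n(k)$, so by Propositions~\ref{ob} and~\ref{st1} it is $P$-conjugate into $h+\mathfrak l_n(k)^*$ and reduces, modulo $N_n(k)$, to an element of $\mathfrak p_{n-1}(k)^*$ of depth exactly one less, which by the inductive hypothesis one recognizes as $\mathrm p_{n-1}(g_{I'}\cdot f')$ for a suitable smaller configuration.

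Granting the identification of $\mathrm p(g_I\cdot f)$, up to $P$-conjugacy, with $\mathrm{pr}'(\eta)$, note that $\eta=\mathrm{diag}(A,J_d)$ with $A=\mathrm{diag}(J_{s_1-t_1}(a_1),\dots,J_{s_m-t_m}(a_m))\in\mathfrak g_{n-d}(k)$ is already of the form occurring in Theorem~\ref{obs}, so Definition~\ref{depth} yields depth $d=\sum_{k\in K}r_k$, as claimed. As a cross-check one may instead compute $\mathrm{Stab}_P(g_I\cdot f)=P\cap g_I\mathrm{Stab}_G(f)g_I^{-1}$ directly, in the style of Lemmas~\ref{st} and~\ref{nst}, and verify that the resulting stabilizer dimensions are consistent with the formula $\mathrm{Stab}_{P_n(k)}(\alpha)\cong\mathrm{Stab}_{G_{n-d}(k)}(A)\ltimes N_{n-d+1}(k)$ of Theorem~\ref{obs}.

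The main obstacle is the bookkeeping in the second step: for an arbitrary nonempty $K$ and arbitrary cut lengths $r_k$ one must track carefully how the permutation part and the unipotent part of $g_I$ act, show that the heads of the blocks $k\in K$ genuinely splice into a single chain of length exactly $d$, and check that every cross term between two distinct eigenvalues $a_k$ is either absorbed by the cleanup element of $P$ or confined to the last column, where $\mathrm{pr}'$ discards it. Everything downstream of that identification is formal.
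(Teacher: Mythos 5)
Your setup is right and matches the lever the paper actually uses (conjugation by $N_n(k)$ lets you add multiples of the last row at the cost of changes confined to the last column, which $\mathrm{pr}'$ kills), and your computation of $g_I\xi g_I^{-1}$ for $n\in I$ agrees with the paper's $A_I$. But the ``net effect'' you describe is not the effect of conjugating by $g_I$; it is the conclusion of the proposition. After conjugation by $g_I$ the blocks $J_{s_k}(a_k)$ are still intact: the subdiagonal $1$ at the head/tail interface inside each block sits at position $(r_k+1,r_k)$, which is neither in the last row nor in the last column, so it cannot be removed by the free last-row operations, and ``a combination of an element of $N_n(k)$ and an element of $L_n(k)$'' is just an arbitrary element of $P$ — no mechanism is given. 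In the paper this is exactly where the work lies: an induction over the blocks with explicit unipotent conjugations whose entries are powers of $1/(a_k-a_m)$ to detach each tail, followed by solving Sylvester-type equations $M_1B_1-C_2M_1=-N_2v_1$ (solvable precisely because the $a_k$ are pairwise distinct) to absorb the cross terms. Your phrase ``every cross term \dots is either absorbed by the cleanup element of $P$ or confined to the last column'' asserts this without an argument, and it is the place where the distinct-eigenvalue hypothesis must visibly enter.

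The second missing ingredient is the identification of the spliced head configuration with $\mathrm{pr}'_d(J_d)$. That configuration carries the distinct eigenvalues $a_k$ on its diagonal, so it is not conjugate to $J_d$ in $\mathfrak{g}_d$ and no amount of ``rescaling'' produces $J_d$; the statement is an equality of $P_d$-orbits in $\mathfrak{p}_d^*$, and the paper proves it not by clearing entries but by Proposition~\ref{gS}: the head element has trivial $P_d$-stabilizer (Lemma~\ref{st}), hence by the dimension bound of Lemma~\ref{nst} it is strongly regular, hence lies in the unique dense orbit, which by Theorem~\ref{obs} is the orbit of $\mathrm{pr}'_d(J_d)$. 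You cite Lemmas~\ref{st} and~\ref{nst} only as a dimension cross-check on $\mathrm{Stab}_P(g_I\cdot f)$, which cannot determine the orbit of $\mathrm{p}(g_I\cdot f)$ (by Theorem~\ref{obs} the orbit is determined by the depth together with the $G_{n-j}$-class of $A$, not by a stabilizer dimension). Finally, your alternative inductive packaging starts from a false premise: $d\ge 1$ does not force $\mathrm{p}(g_I\cdot f)$ to be nonzero on $\mathfrak{n}_n(k)$ (depth-$1$ elements, e.g.\ $K=\{m\}$, $r_m=s_m=1$, vanish there — only $d\ge 2$ puts you in the $h+\mathfrak{l}_n(k)^*$ branch), and recognizing the reduced element in $\mathfrak{p}_{n-1}(k)^*$ as some $\mathrm{p}_{n-1}(g_{I'}\cdot f')$ is the same computation you deferred, not a simplification of it. So the skeleton is compatible with the paper's proof, but the two steps you label ``bookkeeping'' are its mathematical core and remain unproven.
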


\begin{proof}
Firstly, we change $\mathrm{p}(g_I\cdot f)=\mathrm{pr}'(g_I\cdot \xi)$ to a good form by a series of explicit $P$-conjugations, then we use the result of Proposition \ref{gS} to get the statement.
Set $A_I=g_I\cdot \xi$.

In the case of $n\in I$,  by direct calculation, we have
\[ A_I =\mathrm{diag}(J_{s_1}(a_1),\cdots,J_{s_m}(a_m))+\left(\begin{array}{c}0_{(n-1)\times n}\\ \alpha\end{array}\right),\]
 where $\alpha=(\alpha_1,\cdots,\alpha_m)$, $\alpha_k=(\alpha_{k,1},\cdots, \alpha_{k,s_k})$ is defined by
 \[\alpha_k=\left\{ \begin{array}{ll} (\underbrace{0,\cdots,0}_{s_k}), & k\notin K-\{n\},\\ (a_k-a_m,\underbrace{0,\cdots,0}_{s_k-1}), &k\in K-\{n\}, r_k=1\\ (\underbrace{0,\cdots, 0}_{r_k-2}, 1,a_k-a_m,\underbrace{0,\cdots,0}_{s_k-r_k}), &k\in K-\{n\}, r_k>1.\end{array}\right.\]
We claim that $\mathrm{pr}'(A_I)$ is $P$-conjugated to $\mathrm{pr}'(\zeta)$, where
\[\zeta= \left(\setlength\arraycolsep{0mm}\begin{array}{ccccc}J_{t_1}(a_1)&&&&\\
&J_{s_1-t_1}(a_1)&&&\\
&&\ddots&&\\
&&&J_{t_m}(a_m)&\\
&&&&J_{s_m-t_m}(a_m)\end{array}\right) +\left(\begin{array}{c}0_{(n-1)\times n}\\ \alpha\end{array}\right).  \]

It is sufficient to prove that
$\mathrm{pr}'(A_I)$ is $P$-conjugated to $\mathrm{pr}'(\zeta_k)$ for $0\leq k\leq m$, where
\[\zeta_k= \left(\setlength\arraycolsep{0mm}\begin{array}{cccccccc}J_{t_1}(a_1)&&&&&&&\\
&J_{s_1-t_1}(a_1)&&&&&&\\
&&\ddots&&&&&\\
&&&J_{t_k}(a_k)&&&&\\
&&&&J_{s_k-t_k}(a_k)&&&\\
&&&&&J_{s_{k+1}}(a_{k+1})&&\\
&&&&&&\ddots&\\
&&&&&&&J_{s_m}(a_m)\end{array}\right) +\left(\begin{array}{c}0_{(n-1)\times n}\\ \alpha\end{array}\right),  \]
since $\zeta_m=\zeta$.

Let's prove it by induction on $k$.
When $k=0$, it is trivially true.
Assume that $k\geq 1$ and it is true for $k-1$.

If $k\notin K$, or if $k\in K$ and $r_k=s_k$, then $\zeta_k=\zeta_{k-1}$ and so it is true for $k$. Assume that $k\in K, r_k<s_k$, we go to prove that it is true for $k$. Since we assume that $\mathrm{pr}'(A_I)$ is $P$-conjugated to $\mathrm{pr}'(\zeta_{k-1})$, it is sufficient to prove that $\mathrm{pr}'(\zeta_{k-1})$ is $P$-conjugated to $\mathrm{pr}'(\zeta_k)$.

Step 1. We have
\[\zeta_{k-1}=\left(\begin{array}{ccccc}
B_1&0&0&0&0\\
0&C_1&0&0&0\\
0&D&C_2&0&0\\
0&0&0&B_2&0\\
v_1&w_1&0&v_2&a_m \end{array}\right),\]
where
\[B_1=\mathrm{diag}(J_{t_1}(a_1),J_{s_1-t_1}(a_1),\cdots,J_{t_{k-1}}(a_{k-1}),J_{s_{k-1}-t_{k-1}}(a_{k-1})),\]
\[B_2=\mathrm{diag}(J_{s_{k+1}}(a_{k+1}),\cdots,J_{s_{m-1}}(a_{m-1}),J_{s_m-1}(a_m)),\]
\[C_1=J_{r_k}(a_k), C_2=J_{s_k-r_k}(a_k),v_1=(\alpha_1,\cdots,\alpha_{k-1}), w_1=(\alpha_{k,1},\cdots,\alpha_{k,r_k}),\]
\[ v_2=\left\{\begin{array}{ll}(\alpha_{k+1},\cdots,\alpha_{m-1}), & s_m=1,\\
(\alpha_{k+1},\cdots,\alpha_{m-1},\underbrace{0,\cdots,0}_{s_m-2},1), & s_m>1, \end{array}\right.\]
\[D\ \text{is the}\ (s_k-r_k)\times r_k\ \text{matrix with}\ 1\ \text{in the position}\ (1,r_k)\ \text{and zero elsewhere}.\]

Let
\[P_1=\left(\begin{array}{ccccc}
I_{s_1+\cdots+s_{k-1}}&0&0&0&0\\
0&I_{r_k}&0&0&0\\
0&N_1&I_{s_k-r_k}&0&N_2\\
0&0&0&I_{s_{k+1}+\cdots+s_m-1}&0\\
0&0&0&0&1 \end{array}\right)\]
with $N_1=[0_{(s_k-r_k)\times (r_k-1)},n_1]$, $N_2=-n_1$ and
\[n_1 =(\frac{1}{a_k-a_m},-\frac{1}{(a_k-a_m)^2},\cdots,\frac{(-1)^{s_k-r_k+1}}{(a_k-a_m)^{s_k-r_k}})^t\in \mathbb{C}^{s_k-r_k}.\]

By direct calculation, we get
\[P_1\zeta_{k-1}P_1^{-1}=\left(\begin{array}{ccccc}
B_1&0&0&0&0\\
0&C_1&0&0&0\\
N_2v_1&0&C_2&N_2v_2&a_mN_2-A_2N_2\\
0&0&0&B_2&0\\
v_1&w_1&0&v_2&a_m \end{array}\right).\]

Step 2. Let
\[P_2=\left(\begin{array}{ccccc}
I_{s_1+\cdots+s_{k-1}}&0&0&0&0\\
0&I_{r_k}&0&0&0\\
M_1&0&I_{s_k-r_k}&M_2&0\\
0&0&0&I_{s_{k+1}+\cdots+s_m-1}&0\\
0&0&0&0&1 \end{array}\right),\]
$M_1$ is an $(s_k-r_k)\times (s_1+\cdots+s_{k-1})$ matrix, and $M_2$ is an $(s_k-r_k)\times (s_{k+1}+\cdots+s_m-1)$ matrix,
then $P_2(P_1\zeta_{k-1}P_1^{-1})P_2^{-1}=$
\[\left(\begin{array}{ccccc}
B_1&0&0&0&0\\
0&C_1&0&0&0\\
N_2v_1+(M_1B_1-C_2M_1)&0&C_2&N_2v_2+(M_2B_2-C_2M_2)&a_mN_2-A_2N_2\\
0&0&0&B_2&0\\
v_1&w_1&0&v_2&a_m \end{array}\right).\]
By solving the linear equations, one see that there exists $M_1$ and $M_2$, such that
\[N_2v_1+(M_1B_1-C_2M_1)=0, \text{and}\ N_2v_2+(M_2B_2-C_2M_2)=0.\]
Actually, we can write $B_1=D'+N'$ and $C_2=D''+N''$ as Jordan decomposition, where $D',D''$ are semisimple matrices and $N',N''$ are nilpotent matrices. It is easy to see that the linear map
\[M_1\mapsto M_1N'-N''M_1\]
is nilpotent, the linear map
\[M_1\mapsto M_1D'-D''M_1\]
is semisimple, and they commute with each other.
Moreover, the linear map $M_1\mapsto M_1D'-D''M_1$ is nondegenerate since $B_1$ and $C_2$ have no common eigenvalues. Therefore, we can easily see that the equation $N_2v_1+(M_1B_1-C_2M_1)=0$ has a (unique) solution $M_1$. Similarly we can get $M_2$.

Using such $P_2$ defined as above, we have $\mathrm{pr}'(P_2(P_1\zeta_{k-1}P_1^{-1})P_2^{-1})=\mathrm{pr}'(\zeta_k)$.
By induction, one can finish the proof of the claim.

Let $\zeta'$ be the submatrix of $\zeta$ with the rows and columns indexed by
\[\bigsqcup_{h\in K}\{\sum_{j=0}^{h-1}s_j+1,\cdots,\sum_{j=0}^{h-1}s_j+r_h\}.\]
Applying Proposition \ref{gS}, we see that $\mathrm{pr}'_d(\zeta')$ is $P_d(\mathbb{C})$-conjugated to $\mathrm{pr}'_d(J_d)$, where $\mathrm{pr}'_d: \mathfrak{g}_d(\mathbb{C})\to \mathfrak{p}_d(\mathbb{C})^*$. So $\mathrm{pr}'(\zeta)$ is $P$-conjugated to the element $\mathrm{pr}'(\eta)$ in the statement,
and so is $\mathrm{p}(g_I\cdot\xi)$ .

In the case of $n\notin I, m\in K$,  one can prove the statement similarly. We have
\[A_I=\left(\begin{array}{ccc}
B_1&0&0\\
C&B_2&v_1\\
v_2&0&a_m\end{array}\right),\]
where
\[B_1=\mathrm{diag}(J_{s_1}(a_1),\cdots,J_{s_{m-1}}(a_{m-1}),J_{r_m-1}(a_m)),B_2=J_{s_m-r_m}(a_m),\]
\[C=\left(\begin{array}{c}-v_I({1,\cdots,n-(s_m-r_m)-1}) \\ 0_{(s_m-r_m)\times(n-(s_m-r_m)-1) }\end{array}\right),\]
\[v_1=(1,\underbrace{0,\cdots,0}_{s_m-r_m-1})^t, v_2=\left\{\begin{array}{ll}(\alpha({1,\cdots,n-s_m}),\underbrace{0,\cdots,0}_{r_m-2},1), & r_m\geq 2\\
(\alpha({1,\cdots,n-s_m}), & r_m=1.\end{array}\right. \]
We can use the method in the step 2 to show that $\mathrm{p}(A_I)$ is $P$-conjugated to $\mathrm{pr}(\zeta')$, where \[\zeta'=\left(\begin{array}{ccc}
B_1&0&0\\
0&B_2&v_1\\
v_2&0&a_m\end{array}\right).\]
Then the submatrix of $\zeta'$ with rows and columns indexed by $\{1,\cdots,n\}-\{n-(s_k-r_k),\cdots,n-1\}$ is
\[\left(\begin{array}{cc}
B_1&0\\
v_2&a_m\end{array}\right),\]
which is of the form in the case of $n\in I$. Applying the result of the case of $n\in I$, we can get the statement in the case of $n\notin I, m\in K$.

In the case of $n\notin I, m\notin K$, assume $m_0=\max \{k\in K\}$. By directly calculation, we get that the submatrix of $A_I$ with rows and columns indexed by $\{1,2,\cdots,s_1+\cdots+s_{m_0}-1\}\cup\{n\}$ is of the form in the above two cases. Applying the results of the above two cases, we can get the statement in the case of $n\notin I, m\notin K$.

This completes the proof of the statement.
\end{proof}

\subsubsection{General orbits}

We go to calculate the moment map $\mathrm{p}: \mathcal{O}_f\to \mathfrak{p}^*$ for any $f\in \mathfrak{g}^*$.
Write
\[J^l_k(a)=\mathrm{diag}(\underbrace{J_k(a),\cdots,J_k(a)}_{l\ \text{blocks}}),\]
for any $a\in \mathbb{C}$, $k,l\in \mathbb{Z}_+$.

Let $m\in \mathbb{Z}_+$, $m\leq n$.
Let $a_1$, $\cdots$, $a_m$ $\in \mathbb{C}, a_i\neq a_j$ when $1\leq i\neq j\leq m$.
Set
\[J_{k,a_i}^l=J_k^l(a_i)\]
for $1\leq i\leq m$, $k,l\in \mathbb{Z}_+$.
Let
\[\xi=\mathrm{diag}(J_{k_{11},a_1}^{l_{11}},J_{k_{12},a_1}^{l_{12}},\cdots,J_{k_{1r_1},a_1}^{l_{1r_1}}, \cdots, J_{k_{m1},a_m}^{l_{m1}},J_{k_{m2},a_m}^{l_{m2}},\cdots,J_{k_{mr_m},a_m}^{l_{mr_m}}),\]
where
$r_j\in \mathbb{Z}_+, 1\leq j\leq m$; and $l_{ji}, k_{ji}\in \mathbb{Z}_+$, $\forall 1\leq j\leq m, 1\leq i\leq r_j$;
$k_{ji_1}< k_{ji_2}$ when $1\leq j\leq m$, $1\leq i_1<i_2\leq r_j$; and
\[\sum_{1\leq i\leq m, 1\leq i\leq r_j}k_{ji}\cdot l_{ji}=n.\]
So $\xi$ has $m$ different eigenvalues, denoted by $a_j, 1\leq j\leq m$, and has $r_j$ different kinds of Jordan blocks for the eigenvalue $a_j$, with size $k_{ji}$ and $l_{ji}$ pieces of such size for $1\leq i\leq r_j$.

Set $f=\mathrm{pr}(\xi)$. It is clear that every orbit in $\mathfrak{g}^*$ is of the form $\mathcal{O}_f$.

Now, we fix a $\xi$, and set $\xi_j=\mathrm{diag}(J_{k_{j1},a_j}^{l_{j1}},J_{k_{j2},a_j}^{l_{j2}},\cdots,J_{k_{jr_j},a_j}^{l_{jr_j}})$, and $n_j=\sum_{r=1}^{r_j}k_{jr}\cdot l_{jr}$ for $1\leq j\leq m$. For convenience, set $n_0=0$, $k_{j0}=0, l_{j0}=0$ for $1\leq j\leq m$.

Let $M_{s\times s}(\mathbb{C})$ be the set of all $s\times s$ matrices over $\mathbb{C}$. Write
\[K(x_1,\cdots,x_s)=\left(\begin{array}{cccc} x_1&&&\\ x_2& x_1 &&\\ \vdots& \ddots & \ddots &\\ \ x_s &\cdots &x_2&x_1\end{array}\right), x_i\in \mathbb{C}, 1\leq i\leq s,\]
then
\[\{X\in M_{s\times s}(\mathbb{C})\ |\ XJ_s=J_sX\}=\{K(x_1,\cdots,x_s)\ |\  x_i\in \mathbb{C}, 1\leq i\leq s\},\]
and we have the following Proposition.

\begin{Proposition}\label{LK1} For any $a\in \mathbb{C}$, $k,l\in \mathbb{Z}_+$,
\begin{align*}L_{k,l}&=\mathrm{Stab}_{\mathrm{GL}_{k\cdot l}(\mathbb{C})}(J^l_k(a))=
\{\left(\begin{array}{ccc}
K(x_{11,1},\cdots,x_{11,k})&\cdots &K(x_{1l,1},\cdots,x_{1l,k})\\
\vdots &\ddots &\vdots\\
K(x_{l1,1},\cdots,x_{l1,k})&\cdots &K(x_{ll,1},\cdots,x_{ll,k})
\end{array}\right) \\
&|\ x_{ij,h}\in \mathbb{C}, \forall 1\leq i,j\leq l, 1\leq h\leq k, \det(x_{ij,1})_{1\leq i,j\leq l}\neq 0\}.\end{align*}
\end{Proposition}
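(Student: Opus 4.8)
The plan is to reduce everything to the centralizer of a single Jordan block, which is recalled immediately before the statement, and then to pin down the invertibility constraint by an elementary determinant computation.

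First I would note that $J^l_k(a) = a\,I_{kl} + J^l_k(0)$ with $J^l_k(0) = \mathrm{diag}(J_k,\dots,J_k)$, so an element $g \in \mathrm{GL}_{kl}(\mathbb{C})$ centralizes $J^l_k(a)$ if and only if it centralizes $J^l_k(0)$; hence we may take $a = 0$. Writing an arbitrary $X \in M_{kl\times kl}(\mathbb{C})$ as an $l\times l$ array of $k\times k$ blocks $X = (X_{ij})_{1\le i,j\le l}$, the equation $X\,J^l_k(0) = J^l_k(0)\,X$ is equivalent, block by block, to $X_{ij}J_k = J_k X_{ij}$ for all $i,j$, because $J^l_k(0)$ is block diagonal with the same block $J_k$ in each slot. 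By the description recalled just above the statement, $X_{ij}J_k = J_k X_{ij}$ holds precisely when $X_{ij} = K(x_{ij,1},\dots,x_{ij,k})$ for scalars $x_{ij,h}$. This already gives that $L_{k,l}$ is contained in the set of block matrices of the stated shape, and conversely every such matrix commutes with $J^l_k(a)$.

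It remains to determine when such an $X$ is invertible. I would do this by reindexing: write the standard basis of $\mathbb{C}^{kl}$ as $f_{i,h}$ with $i \in \{1,\dots,l\}$ the block index and $h \in \{1,\dots,k\}$ the position inside the block, and group the basis into $G_h = \{f_{1,h},\dots,f_{l,h}\}$ ordered $G_1,\dots,G_k$. Since $K(x_1,\dots,x_k)$ sends the $h$-th basis vector of $\mathbb{C}^k$ to $x_1 e_h + x_2 e_{h+1} + \cdots + x_{k-h+1} e_k$, in this ordering $X$ maps $\mathrm{span}(G_h)$ into $\mathrm{span}(G_h) + \cdots + \mathrm{span}(G_k)$, so it is block lower triangular with $k$ diagonal blocks, each equal to $M_0 := (x_{ij,1})_{1\le i,j\le l}$; hence $\det X = (\det M_0)^k$, and $X \in \mathrm{GL}_{kl}(\mathbb{C})$ if and only if $\det(x_{ij,1})_{1\le i,j\le l} \ne 0$, which is the asserted criterion.

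An equivalent, perhaps more conceptual, route is to identify the centralizer with the matrix algebra $M_{l\times l}(\mathbb{C}[t]/(t^k))$ via $K(x_1,\dots,x_k) \leftrightarrow x_1 + x_2 t + \cdots + x_k t^{k-1}$ (using $K(x_1,\dots,x_k) = \sum_h x_h J_k^{h-1}$ and $J_k^k = 0$), to note that an element of a unital finite-dimensional $\mathbb{C}$-subalgebra of $M_{kl\times kl}(\mathbb{C})$ is invertible in the subalgebra as soon as it is invertible in $M_{kl\times kl}(\mathbb{C})$ (write the inverse as a polynomial in the element via its minimal polynomial), and to use that a matrix over the local ring $\mathbb{C}[t]/(t^k)$ is invertible exactly when the constant term of its determinant, namely $\det M_0$, is nonzero. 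In either approach the only point calling for any real care is this last invertibility bookkeeping; I would single it out as the main obstacle, since the structural form of the blocks is immediate from the single-Jordan-block case already in hand.
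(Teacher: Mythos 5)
Your proposal is correct and follows essentially the same route as the paper: the paper's proof consists of observing (via the single-block commutant recalled just before the statement) that the centralizer consists of matrices of the displayed block form and then checking by direct calculation that the determinant of such a matrix equals $(\det(x_{ij,1})_{1\leq i,j\leq l})^k$, which is exactly what your basis-regrouping argument establishes in detail. Your alternative identification with $M_{l\times l}(\mathbb{C}[t]/(t^k))$ is a valid conceptual variant but is not needed beyond the determinant computation.
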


\begin{proof}
By direct calculation, we see that the determinate of the element on the right hand side equals to $(\det(x_{ij,1})_{1\leq i,j\leq l})^k$ and get the statement immediately.
\end{proof}

For any $k_1,k_2\in \mathbb{Z}_+$ such that $k_1\leq k_2$, and $x_1,\cdots,x_{k_1}\in \mathbb{C}$,
define $H_1(x_1,\cdots,x_{k_1})$ as the $k_1\times k_2$ matrix
\[[K(x_1,\cdots,x_{k_1}),0_{k_1\times (k_2-k_1)}],\]
and define $H_2(x_1,\cdots,x_{k_1})$ is the $k_2\times k_1$ matrix
\[\left(\begin{array}{c} 0_{(k_2-k_1)\times k_1}\\
 K(x_1,\cdots,x_{k_1})\end{array}\right).\]

\begin{Proposition}
For any $a\in \mathbb{C}$, $k_1,k_2,l_1,l_2 \in \mathbb{Z}_+$, $k_1\leq k_2$,
\begin{align*}M_{k_1,k_2}^{l_1,l_2}&=\{X\in M_{k_1l_1\times k_2l_2}(\mathbb{C})\ |\ X(J_{k_2}^{l_2}(a))=(J_{k_1}^{l_1}(a))X\}\\
&=\{\left(\begin{array}{ccc}H_1(x_{11,1},\cdots,x_{11,k_1})&\cdots &H_1(x_{1l_2,1},\cdots,x_{1l_2,k_1})\\
\vdots &\ddots &\vdots\\
H_1(x_{l_11,1},\cdots,x_{l_11,k_1})&\cdots &H_1(x_{l_1l_2,1},\cdots,x_{l_1l_2,k_1})\end{array}\right)
\\& |\ x_{ij,h}\in \mathbb{C}, \forall 1\leq i\leq l_1,1\leq j\leq l_2, 1\leq h\leq k_1\},\end{align*}
\begin{align*}N_{k_1,k_2}^{l_1,l_2}&=\{X\in M_{k_2l_2\times k_1l_1}(\mathbb{C})\ |\ X(J_{k_1}^{l_1}(a))=(J_{k_2}^{l_2}(a))X\}
\\&=\{\left(\begin{array}{ccc}H_2(x_{11,1},\cdots,x_{11,k_1})&\cdots &H_2(x_{1l_1,1},\cdots,x_{1l_1,k_1})\\
\vdots &\ddots &\vdots\\
H_2(x_{l_21,1},\cdots,x_{l_21,k_1})&\cdots &H_2(x_{l_2l_1,1},\cdots,x_{l_2l_1,k_1})\end{array}\right)
\\& |\ x_{ij,h}\in \mathbb{C}, \forall 1\leq i\leq l_2,1\leq j\leq l_1, 1\leq h\leq k_1\}.\end{align*}
\end{Proposition}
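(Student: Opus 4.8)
The argument is a direct entrywise computation in the spirit of the proof of Proposition \ref{LK1}, organized as three successive reductions.

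First, I reduce to the case $a=0$. Since $J^l_k(a)$ differs from $J^l_k=J^l_k(0)$ by the scalar matrix $aI$, the intertwining relations defining $M_{k_1,k_2}^{l_1,l_2}$ and $N_{k_1,k_2}^{l_1,l_2}$ are unchanged if $J^{l_2}_{k_2}(a),J^{l_1}_{k_1}(a)$ are replaced by $J^{l_2}_{k_2},J^{l_1}_{k_1}$; so I may assume $a=0$. Second, I decouple the blocks. Recalling that $J^{l}_{k}$ is block-diagonal with $l$ copies of $J_k$, I write an element $X$ of $M_{k_1l_1\times k_2l_2}(\mathbb{C})$ as an $l_1\times l_2$ array $(X_{pq})$ of blocks $X_{pq}\in M_{k_1\times k_2}(\mathbb{C})$; then $X J^{l_2}_{k_2}=J^{l_1}_{k_1}X$ holds if and only if $X_{pq}J_{k_2}=J_{k_1}X_{pq}$ for all $p,q$, and likewise for $N$ with blocks in $M_{k_2\times k_1}(\mathbb{C})$. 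This reduces everything to describing the two single-block spaces
\[\mathcal{M}=\{Y\in M_{k_1\times k_2}(\mathbb{C}): YJ_{k_2}=J_{k_1}Y\},\qquad \mathcal{N}=\{Y\in M_{k_2\times k_1}(\mathbb{C}): YJ_{k_1}=J_{k_2}Y\}.\]

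Third, I solve the single-block equations directly. For $Y=(y_{ij})\in\mathcal{M}$, right multiplication by $J_{k_2}$ shifts columns and left multiplication by $J_{k_1}$ shifts rows, so $YJ_{k_2}=J_{k_1}Y$ is equivalent to $y_{i,j+1}=y_{i-1,j}$ for all indices in range, with the conventions $y_{0,j}=0$ and $y_{i,k_2+1}=0$. Hence $y_{ij}$ is constant along each diagonal $\{(i,j):i-j=c\}$; the first-row relations $y_{1,j+1}=0$ kill all diagonals with $i-j<0$, i.e. $y_{ij}=0$ whenever $j>i$; and when $k_1\le k_2$ the last-column relations $y_{i-1,k_2}=0$ lie on diagonals with $i-j\le k_1-1-k_2<0$ and so add nothing new. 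Setting $x_h:=y_{h,1}$ for $1\le h\le k_1$ gives $y_{ij}=x_{i-j+1}$ for $j\le i$ and $y_{ij}=0$ for $j>i$, that is $Y=[K(x_1,\dots,x_{k_1}),0_{k_1\times(k_2-k_1)}]=H_1(x_1,\dots,x_{k_1})$. The analogous analysis for $Y\in\mathcal{N}$ (now $k_2$ rows, $k_1$ columns) gives again constancy along diagonals and vanishing off a band of exactly $k_1$ diagonals, producing $Y=H_2(x_1,\dots,x_{k_1})$. Plugging these descriptions of $\mathcal{M},\mathcal{N}$ back into the block decomposition yields precisely the stated formulas for $M_{k_1,k_2}^{l_1,l_2}$ and $N_{k_1,k_2}^{l_1,l_2}$.

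The only place requiring care is the index bookkeeping in the third step: identifying exactly which diagonals are annihilated by the boundary rows and columns, confirming that precisely $k_1$ free parameters survive, and checking they assemble into the lower-triangular Toeplitz matrix $K$ (with the zero block on the correct side for $H_1$ versus $H_2$). The first two reductions, and the fact that no invertibility hypothesis enters here (unlike in Proposition \ref{LK1}), are purely formal.
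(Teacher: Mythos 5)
Your proposal is correct, and it is the same direct entrywise verification the paper has in mind: the paper's proof of this proposition is simply ``It can be checked easily,'' so your write-up just supplies the omitted details (reduction to $a=0$, block decoupling, and the diagonal analysis of $YJ_{k_2}=J_{k_1}Y$). The bookkeeping checks out, including the asymmetry you implicitly handle: for $\mathcal{M}$ the last-column relations are vacuous since $k_1\leq k_2$, while for $\mathcal{N}$ they are exactly what kill the diagonals above the bottom band, yielding $H_1$ versus $H_2$ respectively.
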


\begin{proof}
It can be checked easily.
\end{proof}

For convenience, we fix $j$ and set $r=r_j$, $k_u=k_{ju}, l_u=l_{ju}$, for $1\leq u\leq r_j$, in the Proposition \ref{Lj} and its proof.
\begin{Proposition}\label{Lj} The stabilizer
\begin{align*}L_{j}&=\mathrm{Stab}_{\mathrm{GL}_{n_j}(\mathbb{C})}(\xi_{j})=
\{\left(\begin{array}{cccc}
X_{11}&X_{12}&\cdots &X_{1r}\\
X_{21}&X_{22}&\cdots &X_{2r}\\
\vdots &\vdots &\ddots &\vdots\\
X_{r1}&X_{r2}&\cdots &X_{rr}
\end{array}\right)
 \ |\ X_{uu}\in L_{k_u,l_u}, \forall 1\leq u\leq r,\\& X_{st}\in M_{k_s,k_t}^{l_s,l_t}, \forall 1\leq s<t\leq r,
X_{st}\in N_{k_s,k_t}^{l_s,l_t}, \forall 1\leq t<s\leq r\}.\end{align*}
\end{Proposition}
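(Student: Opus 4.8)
The plan is to rewrite membership in $L_j$ as a commutation relation and then decouple it blockwise along the Jordan decomposition of $\xi_j$. For $X\in\mathrm{GL}_{n_j}(\mathbb{C})$ one has $X\in L_j=\mathrm{Stab}_{\mathrm{GL}_{n_j}(\mathbb{C})}(\xi_j)$ if and only if $X\xi_j=\xi_j X$. Decompose $X=(X_{st})_{1\le s,t\le r}$ into blocks according to the partition $n_j=k_1l_1+\dots+k_rl_r$ into the Jordan pieces $J^{l_u}_{k_u}(a_j)$ of $\xi_j$ (here I abbreviate $r=r_j$, $k_u=k_{ju}$, $l_u=l_{ju}$, as in the statement). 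Since $\xi_j$ is block diagonal with $u$-th diagonal block $J^{l_u}_{k_u}(a_j)$, the equation $X\xi_j=\xi_j X$ is equivalent to the family of decoupled block equations
\[
X_{st}\,J^{l_t}_{k_t}(a_j)=J^{l_s}_{k_s}(a_j)\,X_{st},\qquad 1\le s,t\le r.
\]
Note that, in contrast to the situation in the proof of Proposition \ref{rg}, all diagonal blocks of $\xi_j$ carry the single eigenvalue $a_j$, so none of these equations is forced to have only the zero solution.

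Next I would read off each block from the results already established. For $s=t$ the relation becomes $X_{uu}J^{l_u}_{k_u}(a_j)=J^{l_u}_{k_u}(a_j)X_{uu}$, so by Proposition \ref{LK1} (applied with $a=a_j$, $k=k_u$, $l=l_u$) the matrices commuting with $J^{l_u}_{k_u}(a_j)$ are exactly the $K$-patterned block matrices displayed there, and those of nonzero determinant are precisely the elements of $L_{k_u,l_u}$. For $s<t$ we have $k_s\le k_t$ and the relation is by definition the statement that $X_{st}\in M^{l_s,l_t}_{k_s,k_t}$; for $t<s$ we have $k_t\le k_s$ and the relation exhibits $X_{st}$ as an element of $N^{l_s,l_t}_{k_s,k_t}$, defined in the preceding Proposition by the intertwining relation $X_{st}J^{l_t}_{k_t}(a_j)=J^{l_s}_{k_s}(a_j)X_{st}$. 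Conversely, any block matrix whose diagonal blocks lie in $L_{k_u,l_u}$ and whose off-diagonal blocks lie in the indicated $M$- and $N$-spaces satisfies all of the above equations, hence commutes with $\xi_j$.

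The one point that is not a direct transcription of the earlier lemmas is the invertibility clause, namely that such an $X$ lies in $\mathrm{GL}_{n_j}(\mathbb{C})$ exactly when each diagonal block $X_{uu}$ does. For this, set $N=\xi_j-a_jI$, which is nilpotent; by Nakayama's lemma an $X$ commuting with $\xi_j$ is invertible if and only if the induced endomorphism $\bar X$ of $V/NV$ (with $V=\mathbb{C}^{n_j}$) is invertible. Using the strict orderings $k_{j1}<k_{j2}<\dots<k_{jr}$ fixed in the statement, a direct inspection of the block forms of the $M$- and $N$-spaces shows that $\bar X$ is block upper-triangular, with $u$-th diagonal block the leading-coefficient matrix $(x_{ij,1})_{1\le i,j\le l_u}$ of $X_{uu}$; hence $\bar X$, and so $X$, is invertible if and only if every $X_{uu}\in L_{k_u,l_u}$. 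Combining the two inclusions gives the asserted equality. I do not anticipate a genuine obstacle here: the argument is mechanical, and the only place demanding care is keeping the bookkeeping straight — which off-diagonal blocks land in $M$ versus $N$, and how the size inequalities $k_s\le k_t$ match the conventions of the previous two Propositions.
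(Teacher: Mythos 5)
Your proof is correct. The first half (decoupling $X\xi_j=\xi_jX$ into the block relations $X_{st}J^{l_t}_{k_t}(a_j)=J^{l_s}_{k_s}(a_j)X_{st}$ and reading off each block from Proposition \ref{LK1} and the $M$/$N$ proposition) is exactly what the paper treats as the "clear" inclusion, so there is nothing to compare there beyond the harmless index bookkeeping in the notation $N^{l_s,l_t}_{k_s,k_t}$. Where you genuinely diverge is the invertibility step: the paper disposes of it by asserting the determinant identity $\det X=\prod_{u}\det X_{uu}$ for matrices of the displayed block pattern, with no further justification, whereas you reduce modulo the nilpotent part $N=\xi_j-a_jI$, note that an endomorphism commuting with $N$ is invertible iff the induced map $\bar X$ on $V/NV$ is (Nakayama, or equivalently $V=X(V)+NV\Rightarrow V=X(V)+N^mV=X(V)$), and then observe that the $H_2$-blocks have zero leading rows (using $k_t<k_s$ for $t<s$) while the $K$- and $H_1$-blocks have leading entry $x_{ij,1}$, so $\bar X$ is block upper-triangular with diagonal blocks $(x_{ij,1})_{1\le i,j\le l_u}$; combined with the determinant computation inside Proposition \ref{LK1} this gives the equivalence. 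Your route is slightly longer but is self-contained at precisely the point where the paper leans on an unproved "fact"; indeed your graded/triangular observation is essentially a proof of (what is needed from) that determinant identity, since $\det X$ is controlled by the induced maps on the filtration $V\supseteq NV\supseteq N^2V\supseteq\cdots$. Both arguments use the strict inequalities $k_{j1}<\cdots<k_{jr}$ in the same way, and both correctly conclude that invertibility of $X$ is equivalent to invertibility of all the $X_{uu}$, i.e.\ to $X_{uu}\in L_{k_u,l_u}$.
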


\begin{proof}
It is clear that
\begin{align*}\mathrm{Stab}_{\mathrm{GL}_{n_j}(\mathbb{C})}(\xi_{j})&\subseteq
\{\left(\begin{array}{cccc}
X_{11}&X_{12}&\cdots &X_{1r}\\
X_{21}&X_{22}&\cdots &X_{2r}\\
\vdots &\vdots &\ddots &\vdots\\
X_{r1}&X_{r2}&\cdots&X_{rr}
\end{array}\right)\ |\ X_{st}\in M_{k_s,k_t}^{l_s,l_t}, \forall 1\leq s\leq t\leq r,\\
&X_{st}\in N_{k_s,k_t}^{l_s,l_t}, \forall 1\leq t<s\leq r\},\end{align*}
so we only need to prove the element on the right hand side is invertible if and only if $X_{uu}$ is invertible for all $1\leq u\leq r$.
This follows from the fact that the determinant of the element on the right hand side is equal to $\prod_{1\leq u\leq r}\det X_{uu}$.
\end{proof}

It is clear that
\[\mathrm{Stab}_G(f)=\mathrm{Stab}_G(\xi)=\{\mathrm{diag}(X_1,\cdots,X_m)\ |\ X_i\in L_i, 1\leq i\leq m\}.\]
And we have $\mathcal{O}_f\cong G/L$ and $P\setminus \mathcal{O}_f \cong P\setminus G/L$.

Define the right action of $G$ on $\mathbb{C}^n$ by
\[v\cdot g=(x_1,\cdots,x_n) A,\]
where $v=(x_1,\cdots,x_n)^t\in \mathbb{C}^n$ and $g=A\in G_n(\mathbb{C})$. Set $v_0=(0,\cdots,0,1)^t$, we get $\mathrm{Stab}_G(v_0)=P$, and $\mathbb{C}^n-\{0\} \cong P\setminus G$.

To calculate $P\setminus G/L$, we need to calculate $(\mathbb{C}^n-\{0\})/L$. Actually, we only need to calculate $(\mathbb{C}^{n_j}-\{0\})/L_j$. Once we obtain all the different representatives of $(\mathbb{C}^{n_j}-\{0\})/L_j$, denoted by $V_j$, we can get all the different representatives of $(\mathbb{C}^n- \{0\})/L$ as follows,
\[\{v=(v_1,\cdots,v_m)\neq 0\ |\ v_i\in \mathbb{C}^{n_i}, v_i\in V_i \ \text{or}\ v_i=0, 1\leq i\leq m\}.\]
We fix $j$ and adopt the same notation as Proposition \ref{Lj}. We have following proposition about $(\mathbb{C}^{n_j}-\{0\})/L_j$.

\begin{Proposition}\label{db}
Write $R_0=\{1,\cdots,r\}$. For any $\emptyset \neq R\subset R_0$, and $1\leq x_i\leq k_i$ for $i\in R$, such that
\begin{itemize}
\item[(1)] $x_i < x_{i'}$ for any $i,i'\in R$ such that $i<i'$,
\item[(2)] $k_i-x_i < k_{i'}-x_{i'}$ for any $i,i'\in R$ such that $i<i'$,
\end{itemize}
we define
\[I=\{\sum_{u=0}^{i-1}k_ul_u+k_i(l_i-1)+x_i, i\in R\}\subseteq \{1,2,\cdots, n_j\},\]
and $v_I=(y_1,y_2,\cdots,y_{n_j})$, $y_i=1$ when $i\in I$ and $y_i=0$ when $i\notin I$. Let $I_a$ be the set of all $I$'s constructed as above.
Then $\{v_I\ |\ I\in I_a\}$ represent all different $L_j$-orbits in $\mathbb{C}^{n_j}-\{0\}$.
\end{Proposition}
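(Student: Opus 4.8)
The plan is to prove the two halves of the statement separately: first that every $v\in\mathbb C^{n_j}-\{0\}$ is $L_j$-conjugate to some $v_I$ with $I\in I_a$, and then that distinct $I\in I_a$ give vectors in distinct $L_j$-orbits. Throughout I would use the module picture: writing $\xi_j=a_jI+N$ with $N$ nilpotent, one has $L_j=\mathrm{Stab}(N)$ and $\mathbb C^{n_j}$ becomes the $\mathbb C[t]$-module $M=\bigoplus_{u=1}^{r}(\mathbb C[t]/t^{k_u})^{l_u}$, with $t$ acting as $N$ (for the right action one uses $N^t$ instead, which merely relabels the standard basis); then $L_j=\mathrm{Aut}_{\mathbb C[t]}(M)$, and the block description in Proposition \ref{Lj} is exactly the decomposition of an automorphism into its components on, and homomorphisms between, the groups $(\mathbb C[t]/t^{k_u})^{l_u}$.

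For existence I would reduce in two steps. Step one, reduction inside a single group: for fixed $u$ the subgroup of $L_j$ supported on the $(u,u)$-block acts on $(\mathbb C[t]/t^{k_u})^{l_u}$ as its full automorphism group, and a nonzero $v_u$ of order $a=\max\{q:\ v_u\in t^q(\mathbb C[t]/t^{k_u})^{l_u}\}$ can be carried to $t^a$ times the generator of one copy — write $v_u=t^aw$ with $w\notin t\cdot(\mathbb C[t]/t^{k_u})^{l_u}$, note $w$ is unimodular over the local ring $\mathbb C[t]/t^{k_u}$ hence extends to a basis, apply the corresponding matrix in $\mathrm{GL}_{l_u}(\mathbb C[t]/t^{k_u})$, and finally permute the $l_u$ copies to put it last. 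As these subgroups (disjoint diagonal blocks) commute, doing this for all $u$ brings $v$ to $\sum_{u\in R'}e^{(u)}_{x_u}$, where $R'=\{u:\ v_u\neq0\}$, $1\le x_u\le k_u$, and $e^{(u)}_{x_u}$ is the basis vector at position $x_u$ in the last size-$k_u$ block; this is the vector attached to $R'$ and $(x_u)_{u\in R'}$, except that the monotonicity conditions (1), (2) need not yet hold. Step two, killing redundant components: if (1) or (2) fails for some pair $s<t$ in $R'$, I would apply a unipotent $g=I+E\in L_j$ whose only nonzero off-diagonal block is a single $\mathbb C[t]$-homomorphism between the last copies of two groups; such a $g$ fixes all components of $v$ but one, which gets an image of the donor component added to it. An explicit computation with the Hom-spaces recorded in Proposition \ref{Lj} shows that when (2) fails, i.e.\ $k_s-x_s\ge k_t-x_t$, a suitable $g$ cancels $e^{(s)}_{x_s}$ using $e^{(t)}_{x_t}$, and when (1) fails, i.e.\ $x_s\ge x_t$, a suitable $g$ cancels $e^{(t)}_{x_t}$ using $e^{(s)}_{x_s}$. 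Every such step strictly decreases $|R'|$ and leaves the other components untouched, so after at most $r$ steps we reach a $v_I$ with $I\in I_a$.

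For uniqueness I would exhibit an orbit invariant. Elements of $L_j$ commute with $N$ (and with $N^t$ for the right action), so the height sequence $\theta_v(q)=\max\{p:\ N^qv\in N^p\,\mathbb C^{n_j}\}\in\mathbb Z_{\ge0}\cup\{\infty\}$, $q\ge0$, is constant on $L_j$-orbits. A short computation gives $\theta_{v_I}(q)=\min\{(k_i-x_i)+q:\ i\in R,\ x_i>q\}$, the surviving summands at step $q$ lying in distinct Jordan blocks; conditions (1) and (2) make the minimizing index nondecreasing in $q$ and force $\theta_{v_I}(q)-\theta_{v_I}(q-1)\ge2$ precisely when $q\in\{x_i:\ i\in R\}$, the value one step earlier then being $k_i-1$ for the departing index $i$. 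Reading off the jump positions and the preceding values of $\theta_{v_I}$, and using that the $k_u$ are fixed and pairwise distinct, recovers $R$ and all the $x_i$; so distinct $I\in I_a$ have distinct height sequences and hence distinct orbits. Combining the two halves proves the proposition.

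I expect the main obstacle to be the cancellation computation in step two of the existence argument: determining exactly which homomorphism between a chosen pair of Jordan-block groups can annihilate a given component, and checking that the impossibility of every further cancellation amounts precisely to the conjunction of (1) and (2). Once this absorption criterion is established, termination is automatic and the height-sequence invariant of the uniqueness part is routine.
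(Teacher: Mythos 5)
Your proposal is correct, and its existence half is essentially the paper's argument in module-theoretic dress: the paper also first normalizes the piece $z(t_i)$ of the vector inside each group $J_{k_i}^{l_i}(a_j)$, by realizing $z(t_i)$ as the $(k_i(l_i-1)+v_i)$-th row of an element of $L_{k_i,l_i}$ (Proposition \ref{LK1}) and multiplying by its inverse --- the matrix form of your ``a unimodular element of a free module over the local ring $\mathbb{C}[t]/t^{k_u}$ extends to a basis'' --- and then absorbs redundant components by the explicit unipotent elements $\tilde L'=I_{n_j}+\sum_k E_{a+x_i-x_{i'}+k,\,b+k}$ and $\tilde L''=I_{n_j}+\sum_k E_{b+x_{i'}-x_i+k,\,a+k}$, which are precisely your single-Hom unipotents $I+E$. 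The cancellation criterion you flag as the main obstacle is a one-line divisibility check: a $\mathbb{C}[t]$-map from the larger block $\mathbb{C}[t]/t^{k_{i'}}$ into the smaller one is unconstrained and can send the donor element (of height $k_{i'}-x_{i'}$) onto the unwanted component exactly when $k_i-x_i\ge k_{i'}-x_{i'}$, i.e.\ when (2) fails, while a map in the other direction must land in $t^{k_{i'}-k_i}\,\mathbb{C}[t]/t^{k_{i'}}$ and so can do the job exactly when $x_i\ge x_{i'}$, i.e.\ when (1) fails; this reproduces the paper's matrices, so there is no gap, and note that only this ``if'' direction is needed, since your uniqueness argument does not rely on the impossibility of further cancellations. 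Where you genuinely differ is the distinctness half: the paper compares the orbits $v_I\cdot L_j$ and $v_{\tilde I}\cdot L_j$ directly and recovers $i\in R$ and $x_i$ successively from $i=r$ downwards (a computation it leaves rather terse), whereas you exhibit the manifestly $L_j$-invariant height sequence $\theta_v$ and reconstruct $R$ and the $x_i$ from its jump positions and the values $k_i-1$ just before the jumps, using that the $k_u$ are pairwise distinct. Your invariant buys a cleaner, more checkable uniqueness proof that transfers verbatim to the real case (Proposition \ref{dbr}); the paper's explicit matrix manipulations buy concrete conjugating elements of the kind reused later in the moment-map computations (e.g.\ Proposition \ref{obc}).
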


\begin{proof}
We only need to prove that any $L_j$-orbit in $(\mathbb{C}^{n_j}-\{0\})$ is of the form $v_I\cdot L_j$ for some $I$ given as above, and such orbits are different from each other.

For any $k,l\in \mathbb{Z}_+$, set $e(k,l)=(\underbrace{0,\cdots,0}_{k-1},1,\underbrace{0,\cdots,0}_{l-k})\in \mathbb{C}^l$. For $1\leq i\leq r$, set
\[t_i=\{\sum_{u=0}^{i-1}k_ul_u+1,\sum_{u=0}^{i-1}k_ul_u+2,\cdots, \sum_{u=0}^{i}k_ul_u\}.\]
Let $z=(z_1,\cdots,z_{n_j})$ $\in \mathbb{C}^{n_j}-\{0\}$. We use $z(t_i)$ to denote the subvector of $z$ with the positions indexed by $t_i$.

For any $1\leq i\leq r$, we show that one can use the right action of $L_j$ to change $z$ to $z'$, such that $z'(t_i)$ equals $0$ or $e(k_i(l_i-1)+x_i,k_il_i)$, for some $1\leq x_i\leq k_i$, without changing other elements of $z$.

If $z(t_i)=0$, we don't need to change $z$ any more. Otherwise, take $v_i$ to be the maximal integer such that $1\leq v_i\leq k_i$ and
\[z( \sum_{u=0}^{i-1}k_ul_u+k_i\cdot w+v_i)\neq 0\]
for some integer $0\leq w\leq l_i-1$.
Applying Proposition \ref{LK1}, we can realize $z(t_i)$ as the $k_i(l_i-1)+v_i$-th row of some element in $L_{k_i,l_i}$, denoted by $\tilde{L}$. By the right action of the element
\[\mathrm{diag}(I_{k_1l_1},\cdots,I_{k_{i-1}l_{i-1}},\tilde{L}^{-1},I_{k_{i+1}l_{i+1}},\cdots,I_{k_rl_r})\in L_j,\]
one can change $z$ to $z'$, such that $z'(t_i)=e(k_i(l_i-1)+v_i,k_il_i)$, without changing other elements of $z$.

Now we can get all representatives of $(\mathbb{C}^{n_j}-\{0\})/L_j$. For any $\emptyset \neq R\subseteq R_0$, and $1\leq x_i\leq k_i, i\in R$, define
\[J=\{\sum_{u=0}^{i-1}k_ul_u+k_i(l_i-1)+x_i, i\in R\}\subseteq \{1,2,\cdots, n_j\}\]
and $v_J=(y_1,y_2,\cdots,y_{n_j})$, $y_i=1$ when $i\in J$ and $y_i=0$ when $i\notin J$. Then such $v_J$'s include all representatives
of $(\mathbb{C}^{n_j}-\{0\})/L_j$. However, some of them may be the same representatives.

We go to prove that if $x_i\geq x_{i'}$ for some $i<i'$, then $v_{I'}\cdot L_j=v_I\cdot L_j$ with $I'=I-\{\sum_{u=0}^{i'-1}k_ul_u+k_{i'}(l_{i'}-1)+x_{i'}\}$.
For convenience, set  $a=\sum_{u=0}^{i-1}k_ul_u+k_i(l_i-1)$ and $b=\sum_{u=0}^{i'-1}k_ul_u+k_{i'}(l_{i'}-1)$. From Proposition \ref{Lj}, we have
\[\tilde{L}'=I_{n_j}+\sum_{1\leq k\leq  k_{i}-x_i+x_{i'}}E_{a+x_i-x_{i'}+k,b+k} \in L_j.\]
Moreover, $v_{I'}\cdot \tilde{L}'=v_I$, so $v_{I'}\cdot L_j=v_I\cdot L_j$.

If $k_i-x_i\geq k_{i'}-x_{i'}$ for some $i<i'$, then $v_{I''}\cdot L_j=v_I\cdot L_j$, $I''=I-\{\sum_{u=0}^{i-1}k_ul_u+k_i(l_i-1)+x_i\}$.
For convenience, set  $a=\sum_{u=0}^{i-1}k_ul_u+k_i(l_i-1)$ and $b=\sum_{u=0}^{i'-1}k_ul_u+k_{i'}(l_{i'}-1)$. From Proposition \ref{Lj}, we have
\[\tilde{L}''=I_{n_{i'}}+\sum_{1\leq k\leq k_{i'}-x_{i'}+x_i}E_{b+x_{i'}-x_i+k,a+k} \in L_j.\]
Moreover, $v_{I''}\cdot \tilde{L}''=v_I$, so $v_{I''}\cdot L_j=v_I\cdot L_j$.

Now, we have proved that the set of $\{v_I\ |\ I\in I_a\}$ contains all representatives of $(\mathbb{C}^{n_j}-\{0\})/L_j$. It remains to prove that the orbits of $\{v_I\ |\ I\in I_a\}$ are different from each other. Assume $v_{I}\cdot L_j=v_{\tilde{I}}\cdot L_j$, $\tilde{I}$ is corresponding to $\tilde{R}$ and $\tilde{x}_i$. Then by direct calculation of the orbit $v_I\cdot L_j$ and $v_{\tilde{I}}\cdot L_j$, one see that
\[r\in R \Rightarrow r\in \tilde{R},\ \text{and then}\ \tilde{x}_r=x_r.\]
And get that $i\in R$ $\Rightarrow$ $i\in \tilde{R}$, and $\tilde{x}_i=x_i$ for $i=r-1,\cdots,1$, successively.
\end{proof}

\begin{Remark}
Let $R=\{r\}$ and $x_r=k_r$, define $I^{o}=\{n_j\}$, and the corresponding $v_{I^o}$.
From the structure of $L_j$, we can see that the $L_j$-orbit of $v_{I^o}$ is the unique dense open orbit in $\mathbb{C}^{n_j}-\{0\}$.
\end{Remark}

\begin{Remark}
If we just assume that ``$k_{ji}\neq k_{ji'}$ for any $1\leq i\neq i'\leq r_j$" rather than ``$k_{ji}<k_{ji'}$ for any $1\leq i<i'\leq r_j$", the conditions (1) and (2) in the Proposition \ref{db} should be replaced by (1)' $x_i < x_{i'}$ for any $i,i'\in R$ such that $k_{ji}<k_{ji'}$,
and (2)' $k_i-x_i < k_{i'}-x_{i'}$ for any $i,i'\in R$ such that $k_{ji}<k_{ji'}$.
\end{Remark}

With the result above, we can get the representatives of $(\mathbb{C}^n-\{0\})/L$ immediately.

\begin{Proposition}\label{db1}
Set $M_0=\{1,2,\cdots,m\}$ and  $R_{j0}=\{1,2,\cdots,r_j\}$, $1\leq j\leq m$. For any $\emptyset \neq M\subseteq M_0$, and $\emptyset \neq R_j\subseteq R_{j0}$ for each $j\in M$, and $1\leq x_{ji}\leq k_{ji}$ for $i\in R_j$, $j\in M$ satisfying the following two conditions,
\begin{itemize}
\item[(1)] $x_{ji_1} < x_{ji_2}$ for any $i_1,i_2\in R_j$ such that $i_1<i_2$,
\item[(2)] $k_{ji_1}-x_{ji_1} < k_{ji_2}-x_{ji_2}$ for any $i_1,i_2\in R_j$ such that $i_1<i_2$,
\end{itemize}
set
\[I=\bigsqcup_{j\in M}\{\sum_{v=0}^{j-1}n_v+\sum_{u=0}^{i-1}k_{ju}l_{ju}+k_{ji}(l_{ji}-1)+x_{ji}, i\in R_j\}\subseteq \{1,2,\cdots,n\},\]
and $v_I=(y_1,y_2,\cdots,y_n), y_i=1$ when $i\in I$ and $y_i=0$ when $i\notin I$. Let $I_a$ be the set of all such $I$'s.  Then $\{v_I\ |\ I\in I_a\}$ form all different representatives of $(\mathbb{C}^n-\{0\})/L$.
\end{Proposition}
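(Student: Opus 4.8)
The plan is to exploit the block-diagonal structure of $L$. Since $\xi=\mathrm{diag}(\xi_1,\dots,\xi_m)$ has pairwise distinct eigenvalues $a_1,\dots,a_m$ on its blocks, any matrix commuting with $\xi$ preserves each generalized eigenspace; combined with Proposition \ref{Lj} this yields $L=\mathrm{Stab}_G(f)=\{\mathrm{diag}(X_1,\dots,X_m)\mid X_j\in L_j\}$, as already recorded. Accordingly write $\mathbb{C}^n=\bigoplus_{j=1}^m\mathbb{C}^{n_j}$, where the $j$-th summand consists of the coordinates indexed by $\{\sum_{v=0}^{j-1}n_v+1,\dots,\sum_{v=0}^{j}n_v\}$. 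Under the right action, $L$ preserves this decomposition and acts on the $j$-th summand through $L_j$.

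First I would observe that, for $v=(v_1,\dots,v_m)$ with $v_j\in\mathbb{C}^{n_j}$, the $L$-orbit of $v$ is the product $\prod_{j=1}^m(v_j\cdot L_j)$ of the $L_j$-orbits; hence $v\cdot L=v'\cdot L$ if and only if $v_j\cdot L_j=v_j'\cdot L_j$ for every $j$. Thus a complete and irredundant set of representatives of $(\mathbb{C}^n-\{0\})/L$ is obtained by choosing, for each $j$, a representative of an $L_j$-orbit in $\mathbb{C}^{n_j}$ (the zero orbit being allowed), subject to not all of these being zero. Next I would invoke Proposition \ref{db} for each fixed $j$: the nonzero $L_j$-orbits in $\mathbb{C}^{n_j}$ are represented exactly once by the vectors $v_{I_j}$ with $I_j$ ranging over the index set of Proposition \ref{db}, i.e.\ by a choice of $\emptyset\neq R_j\subseteq R_{j0}$ together with $1\leq x_{ji}\leq k_{ji}$ for $i\in R_j$ satisfying conditions (1) and (2). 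Adjoining the zero orbit as the case $R_j=\emptyset$ (contributing $v_{I_j}=0$) and concatenating over $j$, the requirement that the resulting vector be nonzero is precisely that $M:=\{j:R_j\neq\emptyset\}$ be nonempty. Reindexing the support of the $j$-th block by the shift $\sum_{v=0}^{j-1}n_v$ turns the concatenated supports into exactly the set $I=\bigsqcup_{j\in M}\{\sum_{v=0}^{j-1}n_v+\sum_{u=0}^{i-1}k_{ju}l_{ju}+k_{ji}(l_{ji}-1)+x_{ji}:i\in R_j\}$ of the statement, and $v_I$ is the corresponding $0$–$1$ vector. Completeness and irredundancy of $\{v_I\mid I\in I_a\}$ then follow block by block from Proposition \ref{db}.

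The only step requiring any care is the first one — verifying that $L$ genuinely acts without mixing the summands $\mathbb{C}^{n_j}$, so that orbits factor as products — but this is immediate from the explicit block-diagonal description of $L$ above, which itself rests on Proposition \ref{Lj} and the distinctness of the $a_j$. Everything else is bookkeeping of indices, so the proof is short.
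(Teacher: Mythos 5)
Your proposal is correct and follows essentially the same route as the paper: the paper also reduces to the block computation via the remark preceding Proposition \ref{db} (since $L$ is block diagonal across the distinct eigenvalues, representatives of $(\mathbb{C}^n-\{0\})/L$ are obtained by choosing per block a representative from Proposition \ref{db} or zero, not all zero), and its proof of Proposition \ref{db1} is exactly this observation. Your extra care in checking that orbits factor as products over the blocks is the same point the paper leaves implicit.
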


\begin{proof}
It follows from the Proposition \ref{db}.
\end{proof}

\begin{Remark}
Set $M=M_0$, $R_j=\{r_j\}$ and $x_{jr_j}=k_{jr_j}$ for $1\leq j\leq m$. Let $I^{o}$ be the set corresponding to such $M$, $R_j$ and $x_{jr_j}$, i.e. $I^{o}=\{n_1,n_1+n_2,\cdots,n\}$. Then $v_{I^{o}}$ is the unique dense open $L$-orbit in $\mathbb{C}^n-\{0\}.$
\end{Remark}

We can define $g_I$ in the same way as the contents about regular orbits and such $g_I\cdot f$'s form all different representatives of $P\setminus\mathcal{O}_f$ and the orbit of $g_{I^o}\cdot f$ is the unique dense open orbit among them.

We go to calculate the moment map $\mathrm{p}: \mathcal{O}_f\to \mathfrak{p}^*$. Let's fix some $I$ corresponding to  the given $M$, $R_j (j\in M)$ and $1\leq x_{ji}\leq k_{ji} (i\in R_j,j\in M)$.
For any $j$, let $q_j=\# R_j$, and $R_j=\{c_{j1},\cdots, c_{jq_j}\}$ with $c_{j1}<c_{j2}<\cdots<c_{jq_j}$. For $j\in M$, set $\overline{x}_{jp}=x_{jc_{jp}}, \overline{k}_{jp}=k_{jc_{jp}}$ for $1\leq p\leq q_j$, $\overline{x}_{j0}=0$, and $d_j=\overline{x}_{jq_j}$.

\begin{Proposition}\label{obc} Given $I$ as above, then
$\mathrm{p}(g_I\cdot f)$  has depth $d=\sum_{j\in M}d_j$, and is $P$-conjugated to the element $\mathrm{pr}'(\eta)$, where
\[\eta=\mathrm{diag}(A_1,\cdots,A_m,J_d),\]
where $A_j=\xi_j$ if $j\notin M$; and $A_j=\mathrm{diag}(A_{j1},A_{j2},\cdots, A_{jr_j})$, if $j\in M$, where $A_{ji}$'s are defined as follows, $A_{ji}=J_{k_{ji},a_j}^{l_{ji}}$ if $i\notin R_j$, and
\[A_{ji}=\mathrm{diag}(J_{k_{ji},a_j}^{l_{ji}-1},J_{t,a_j})\]
with
\[t=\overline{k}_{jh}-\overline{x}_{jh}+\overline{x}_{j(h-1)},\]
if $i=c_{jh}\in R_j$ for  $1\leq h\leq q_j$.
\end{Proposition}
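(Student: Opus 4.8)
The plan is to follow the proof of Proposition \ref{rg} step by step; the only genuinely new feature is that a fixed eigenvalue $a_j$ may carry several Jordan blocks. Write $\mathrm{p}(g_I\cdot f)=\mathrm{pr}'(A_I)$ with $A_I=g_I\cdot\xi$. First I would reduce to the case $n\in I$: exactly as in Proposition \ref{rg}, when $n\notin I$ one exhibits a principal submatrix of $A_I$ (obtained by deleting the interior rows and columns of the relevant last active Jordan block) which is of the form occurring in the $n\in I$ case, conjugates it there, and tracks the remaining rows and columns. In the case $n\in I$ one has $g_I=\left(\begin{array}{cc}I_{n-1}&0\\ \beta&1\end{array}\right)$, so $A_I=g_I\xi g_I^{-1}$ is $\xi$ with its last row altered, and by Proposition \ref{db1} the alteration touches, for each $j\in M$ and each $i=c_{jh}\in R_j$, only the last (the $l_{ji}$-th) copy of $J_{k_{ji}}(a_j)$, at its $x_{ji}$-th row. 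Thus for each $j\in M$ and each $h$ one copy of $J_{\overline{k}_{jh}}(a_j)$ is singled out and carries a last-row correction, while all other Jordan blocks are untouched and already coincide with the blocks $J_{k_{ji},a_j}^{l_{ji}}$ $(i\notin R_j)$, $J_{k_{ji},a_j}^{l_{ji}-1}$ $(i\in R_j)$ and $\xi_j$ $(j\notin M)$ appearing in $\eta$.

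The core is then a decoupling performed, by a double induction — outer over the eigenvalues $a_j$, $j\in M$, inner over $h=1,\dots,q_j$ — using only $P$-conjugations. For the cross-eigenvalue steps I would use the device of Proposition \ref{rg}: conjugating by a matrix of shape $\left(\begin{array}{cc}I&M\\ 0&I\end{array}\right)$ or $\left(\begin{array}{cc}I&0\\ M&I\end{array}\right)$ turns the removal of an off-diagonal coupling into solving a Sylvester equation $MB-CM=D$, which is uniquely solvable whenever $B$ and $C$ share no eigenvalue, since then the semisimple part of the operator $M\mapsto MD'-D''M$ is invertible. This strips away all coupling between blocks with distinct eigenvalues. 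Within a single eigenvalue $a_j$ this argument degenerates, and here the ordering hypotheses (1) $x_{ji_1}<x_{ji_2}$ and (2) $k_{ji_1}-x_{ji_1}<k_{ji_2}-x_{ji_2}$ of Proposition \ref{db1} are used: processing the active blocks $\overline{k}_{j1}<\dots<\overline{k}_{jq_j}$ in increasing order, the $h$-th step — a $P_1$-type conjugation peeling off the top $\overline{x}_{jh}-\overline{x}_{j(h-1)}$ rows of the $h$-th active block, followed by a $P_2$-type conjugation removing the residual nilpotent coupling — leaves behind the Jordan block $J_t(a_j)$ with $t=\overline{k}_{jh}-\overline{x}_{jh}+\overline{x}_{j(h-1)}$, the summand $\overline{x}_{j(h-1)}$ being the length carried over from the preceding step, and one checks that $1\le t<\overline{k}_{jh}$ holds precisely because of (1) and (2). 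Because the depths $\overline{x}_{jh}$ are pairwise distinct, the pieces peeled off within eigenvalue $a_j$ fuse, via one further nilpotent shear, into a single Jordan chain $J_{d_j}(a_j)$, $d_j=\overline{x}_{jq_j}$, attached to the last coordinate.

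Collecting everything, $\mathrm{p}(g_I\cdot f)$ is $P$-conjugate to $\mathrm{pr}'(\zeta)$ with $\zeta$ block-diagonal: the blocks other than the last are exactly $A_1,\dots,A_m$ of the statement (the untouched Jordan blocks together with the surviving $J_t(a_j)$'s), while the last block is the direct sum of the $J_{d_j}(a_j)$, $j\in M$, carrying a last-row correction into the final coordinate. Since the $a_j$, $j\in M$, are pairwise distinct, this last block is a dense-orbit representative of the type treated in Proposition \ref{rg} for the mirabolic $P_d(\mathbb{C})$ with $d=\sum_{j\in M}d_j$, hence by Proposition \ref{gS} it is $P_d(\mathbb{C})$-conjugate to $\mathrm{pr}'_d(J_d)$. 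Therefore $\mathrm{pr}'(\zeta)$ is $P$-conjugate to $\mathrm{pr}'(\eta)$ and $\mathrm{p}(g_I\cdot f)$ has depth $d$, which is the assertion.

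I expect the within-eigenvalue analysis to be the main obstacle. The cross-eigenvalue decoupling is essentially Proposition \ref{rg}'s Sylvester argument, but when $B$ and $C$ both involve the eigenvalue $a_j$ one cannot simply solve the coupling away; one must choose the right order of peeling, keep track of the carry-over term $\overline{x}_{j(h-1)}$ that makes the surviving block $J_t(a_j)$ have size $\overline{k}_{jh}-\overline{x}_{jh}+\overline{x}_{j(h-1)}$ rather than $\overline{k}_{jh}-\overline{x}_{jh}$, and verify that the peeled chains — of pairwise distinct lengths by (1) and (2) — indeed fuse into one. Producing the explicit conjugating matrices and confirming that conditions (1) and (2) are exactly what guarantees $t\ge 1$ throughout and the fusion of the peeled chains is the technical heart of the argument.
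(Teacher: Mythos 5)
Your plan follows the paper's own proof: reduce to the case $n\in I$ (treating $n\notin I$ via the distinguished principal submatrix), decouple distinct eigenvalues by the Sylvester-equation conjugations from Proposition \ref{rg}, handle the blocks of a fixed eigenvalue by explicit unipotent conjugations whose existence is exactly what conditions (1) and (2) guarantee — leaving the surviving blocks $J_{\overline{k}_{jh}-\overline{x}_{jh}+\overline{x}_{j(h-1)}}(a_j)$ plus a chain of length $d_j$ coupled to the last coordinate — and finally invoke Propositions \ref{rg} and \ref{gS} to identify the depth-$d$ part with $J_d$. This is essentially the same argument as the paper's (the paper merely organizes the within-eigenvalue step as a single three-stage conjugation over all active blocks, after first reducing to $l_{ji}=1$, rather than your inner induction over $h$), so the proposal is correct.
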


\begin{proof}
As in the proof of Proposition \ref{rg}, by block calculation, we can assume $l_{ji}=1$, $\forall 1\leq i\leq r_j$ and $R_j=\{1,\cdots,r_j\}$ for all $1\leq j\leq m$ at the beginning. Hence, $d_j=x_{jr_j}$ for $1\leq j\leq m$. For convenience, set $A_I=g_I\cdot \xi$.

In the case of $n\in I$,  we have $r_m=1$ since $\{x_{mi},i\in R_m\}$ satisfies condition (1) and (2).

We claim that $\mathrm{pr}'(A_I)$ is $P$-conjugated $\mathrm{pr}'(\zeta_j)$ for $0\leq j\leq m-1$, where
\[\zeta_j=\left(\begin{array}{cccccccc} A_1&&&&&&&\\ &J_{d_1,a_1}&&&&&&\\ &&\ddots&&&&&\\ &&&A_j&&&&\\ &&&&J_{d_j,a_j}&&&\\ &&&&& \xi_{j+1}&&\\ &&&&&& \ddots &\\ &&&&&&& \xi_m\end{array}\right)
+\left(\begin{array}{c}0_{(n-1)\times n}\\ \alpha_j\end{array}\right),\]
and
\[\alpha_j=(\beta_1',\cdots,\beta_j',\beta_{j+1},\cdots,\beta_m).\]
When $1\leq i\leq m-1$, $\beta'_i$ and $\beta_i$ are defined as follows.
\[\beta'_i=\left\{\begin{array}{ll}(\underbrace{0,\cdots,0}_{n_i-2},1,a_i-a_m), & d_i>1,\\  (\underbrace{0,\cdots,0}_{n_i-1},a_i-a_m), & d_i=1,\end{array}\right.\]
and $\beta_i=(\beta_{i1},\cdots,\beta_{ir_i})$, for $1\leq h\leq r_i$,
\[\beta_{ih}=\left\{\begin{array}{ll}
(\underbrace{0,\cdots,0}_{x_{ih}-2},1,a_i-a_m,\underbrace{0,\cdots,0}_{k_{ih}-x_{ih}}), & x_{ih}>1\\
(a_i-a_m,\underbrace{0,\cdots,0}_{k_{ih}-1}), & x_{ih}=1.\end{array}\right.\]
And $\beta_m=(\underbrace{0,\cdots,0}_{n_m})$.

It is trivially true that $\mathrm{pr}'(A_I)$ is $P$-conjugated $\mathrm{pr}'(\zeta_j)$ for $j=0$. Assume that it is true for $j-1$, let's prove that $\mathrm{pr}'(\zeta_{j-1})$ is $P$-conjugated $\mathrm{pr}'(\zeta_j)$, so we get that $\mathrm{pr}'(A_I)$ is $P$-conjugated $\mathrm{pr}'(\zeta_j)$.

Recall that $n_{j'}=\sum_{1\leq i\leq r_{j'}}k_{{j'}i}$ for $1\leq j'\leq m$, $n_0=0$. For convenience, set $r=r_j$, $k_u=k_{ju}$ for $1\leq u\leq r_j$, $x_h=x_{jh}$ for $1\leq h\leq r_j$, $x_0=0$.
Set
\[V=\{\sum_{j'=0}^{j-1}n_{j'}+1,\sum_{j'=0}^{j-1}n_{j'}+2,\cdots,\sum_{j'=0}^{j-1}n_{j'}+\sum_{u=1}^{r}k_u\}\]
and $U=V\cup\{n\}$.

We claim that one can use $P$-conjugations, which only change the elements in the positions $(U,V)$ or in the $n$-th column, to change $\zeta_{j-1}$ to $B$ such that
\[B(U,V)=\left(\begin{array}{c}\mathrm{diag}(J_{e_1}(a_j),\cdots,J_{e_r}(a_j),
J_{x_r}(a_j))\\ \beta_j'\end{array}\right), \]
where $e_h=k_h-x_h+x_{h-1}, 1\leq h\leq r$.

Step 1.
Let
\[P_1=\left(\begin{array}{ccccccc}
I_{n_1+\cdots+n_{j-1}}&&&&&&\\
&I_{k_1}&&&&&\\
&E_1&I_{k_2}&&&&\\
&&E_2&\ddots&&&\\
&&&\ddots&\ddots &&\\
&&&&E_{r-1}&I_{k_r}&\\
&&&&&&I_{n_{j+1}+\cdots+n_{m}}\end{array}\right),\]
where $E_{i}$ is the $k_{i+1}\times k_i$ matrix, with $-1$ in the positions $(x_{i+1},x_i), (x_{i+1}-1,x_i-1),\cdots,(x_{i+1}-x_i+1,1)$(here we use $x_{i+1}>x_{i}$) and $0$ in other positions, for $1\leq i\leq r-1$.

Let $B_1=P_1^{-1}\zeta_{j-1}P_1$, then
\[B_1(U,V) =\left(\begin{array}{ccccc}
J_{k_1,a_j} &&&& \\
E'_1 & J_{k_2,a_j} &&&\\
0& E'_2 & \ddots &&\\
\vdots& \ddots & \ddots &\ddots&\\
0&0&\cdots& E'_{r-1} & J_{k_r,a_j}\\
0&0&\cdots &0&\beta_{jr}
\end{array}\right),\]
where $E'_i$ is $k_{i+1}\times k_i$ matrix with $-1$ in the position $(x_{i+1}+1,x_i)$ and $0$ in other positions, for $1\leq i\leq r-1$.

Step 2.
Using the method of in  Proposition \ref{rg}, one can use $P$-conjugation, which just change the elements in the positions $(U,V)$ or in the $n$-th column, to change $B_1$ to $B_2$ such that
\[B_2(U,V)=\left(\setlength\arraycolsep{1mm}\begin{array}{ccccccc}
J_{k_1,a_j} &&&&& \\
 E'_1 & J_{k_2,a_j} &&&&\\
0& E'_2  & \ddots  &&&\\
\vdots &\ddots&   \ddots & \ddots &&\\
0&0&\ddots&     E'_{r-2} & J_{k_{r-1},a_j} &\\
0&0&\cdots&0&      0 & J_{x_r,a_j}&\\
0&0&\cdots&0&  E''_{r} &0& J_{k_r-x_r,a_j}\\
0&0&\cdots&0 &0&\beta_j'&0
\end{array}\right),\]
$E''_{r}$ is the $(k_r-x_r)\times k_{r-1}$ matrix with $1$ in the position $(1,x_{r-1})$ and $0$ in other positions.

Step 3.
Let
\[P_2=\left(\begin{array}{ccccccc}
I_{n_1+\cdots+n_{j-1}}&&&&&&\\
&I_{k_1}& F_1 &&&&\\
&&\ddots&\ddots&&&\\
&&&\ddots&F_{r-2}&&\\
&&&&I_{k_{r-1}}&F_{r-1}&\\
&&&&&I_{k_r}&\\
&&&&&&I_{n_{j+1}+\cdots+n_{m}}\end{array}\right),\]
$F_i$ is the $k_i\times k_{i+1}$ matrix with $-1$ in the positions $(x_i+1,x_{i+1}+1),\cdots,(k_i,x_{i+1}+k_i-x_i)$(here we use $k_{i+1}-x_{i+1}>k_i-x_{i}$), and $0$ in other positions, for $1\leq i\leq r$.

Let $B_3=P_2^{-1}B_2P_2$,  then $B_3(U,V)=$
\[\left(\setlength\arraycolsep{0mm}\begin{array}{ccccccccccccc}
J_{x_1,a_j} &&&&&&&& \\
0& J_{k_1-x_1,a_j} &&&&&&& \\
0& 0& J_{x_2,a_j} &&&&&&\\
E''_1&0&0&                J_{k_2-x_2,a_j}  &&&&&\\
0&0 &0&0 &J_{x_3,a_j} &&&&&&\\
0&0& E''_2&0&0&J_{k_3-x_3,a_j}&&&&\\
\vdots&\vdots&\vdots &\ddots& \ddots&\ddots&\ddots &&&&\\
\vdots&\vdots&\vdots &\ddots& \ddots&\ddots&\ddots&J_{x_{r-1},a_j}&&&\\
\vdots&\vdots&\vdots &\ddots& \ddots&\ddots&\ddots&0&J_{k_{r-1}-x_{r-1},a_j}&&\\
0&0&0&0&0  &0&\ddots &0& 0 & J_{x_r,a_j}&\\
0&0&0&    0&0&0&\cdots & E''_{r-1}&0&0& J_{k_r-x_r,a_j}\\
0&0&0&0&0&0&\cdots &0 &0&\beta_j' &0
\end{array}\right),\]
where $E''_i$ is the $(k_{i+1}-x_{i+1})\times x_i$ matrix with $1$ in the position $(1,x_i)$ and $0$ in other positions, for $1\leq i\leq r-1$.
Now we can easily see $B_3$ is $P$-conjugated to $\zeta_j$. This finishes the proof of the claim.

Applying Proposition \ref{rg}, by block calculation, we can easily show that $\mathrm{pr}'(\zeta_{m-1})$ is $P$-conjugated to $\mathrm{pr}'(\eta)$. This finish the proof of this case.

In the case of $n\notin I$, using similar method we can also get the statement.

This completes the proof of the statement.
\end{proof}

\begin{Theorem}\label{GLnC} We have that $\mathrm{p}:\mathcal{O}_f\to \mathfrak{p}^*$ sends different $P$-orbits of $\mathcal{O}_f$ to different $P$-orbits of $\mathfrak{p}^*$, the image $\mathrm{p}(\mathcal{O}_f)$ contains a unique dense orbit $P\cdot(g_{I^{o}}\cdot f)$, and $\mathrm{p}$ is proper over $P\cdot (g_{I^{o}}\cdot f)$.
The reduce space of the unique open dense orbit is singleton.
\end{Theorem}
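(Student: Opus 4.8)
The plan is to derive all four assertions from Proposition~\ref{obc} together with the classification of $P$-coadjoint orbits in Theorem~\ref{obs}. Recall that the $P$-orbits of $\mathcal{O}_f$ are exactly $\{P\cdot(g_I\cdot f):I\in I_a\}$, parametrised by the data $(M,\{R_j\}_{j\in M},\{x_{ji}\})$ of Proposition~\ref{db1}, and that by Theorem~\ref{obs} every $P$-orbit of $\mathfrak{p}^*$ has a normal form $\mathrm{pr}'(\mathrm{diag}(A,J_d))$ with $d$ the (well-defined) depth and $A$ determined up to $G_{n-d}(\mathbb{C})$-conjugacy. Proposition~\ref{obc} puts $P\cdot\mathrm{p}(g_I\cdot f)$ into exactly this form, with $d=d_I=\sum_{j\in M}d_j$ and $A=A^{(I)}=\mathrm{diag}(A_1,\dots,A_m)$ the matrix written there. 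Thus the first assertion is equivalent to injectivity of $I\mapsto(d_I,[A^{(I)}])$ on $I_a$; the second and fourth assertions will then be formal, and the third will come from a dimension count.

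For the injectivity, suppose $P\cdot\mathrm{p}(g_I\cdot f)=P\cdot\mathrm{p}(g_{I'}\cdot f)$. Then $d_I=d_{I'}=:d$ and $A^{(I)},A^{(I')}$ have the same Jordan data. Since $a_1,\dots,a_m$ are pairwise distinct, the generalised eigenspace decomposition recovers the Jordan type of each $A_j$ from that of $A^{(I)}$: for $a_j\neq 0$ directly, and for the unique $j_0$ (if any) with $a_{j_0}=0$ by deleting one copy of $d$ from the type of $A_{j_0}\oplus J_d$, which is unambiguous since adjoining $J_d$ simply adds one block of size $d$. Comparing with the fixed Jordan type of $\xi_j$, one has $j\in M$ exactly when the two types differ, because for $j\in M$ the type of $A_j$ arises from that of $\xi_j$ by removing the blocks of sizes $\overline{k}_{j1}<\cdots<\overline{k}_{jq_j}$ and inserting blocks of sizes $\overline{k}_{jh}-\overline{x}_{jh}+\overline{x}_{j(h-1)}$, whose total is strictly smaller because $\overline{x}_{jq_j}\geq 1$. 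Finally, for $j\in M$ one recovers $R_j$ and the $x_{ji}$: the removed sizes give $R_j$ and the $\overline{k}_{jh}$; conditions (1) and (2) of Proposition~\ref{db1}, combined, make the inserted sizes $\overline{k}_{jh}-\overline{x}_{jh}+\overline{x}_{j(h-1)}$ strictly increasing in $h$, so sorting the two multisets of block sizes fixes the pairing $h\leftrightarrow(\overline{k}_{jh},\text{inserted size})$, and then $\overline{x}_{jp}=\sum_{h\leq p}\big(\overline{k}_{jh}-(\text{inserted size at }h)\big)$ recovers all $\overline{x}_{jp}$, hence all $x_{ji}$. This combinatorial matching is the technical heart of the argument and the place where conditions (1),(2) are genuinely used; once Proposition~\ref{obc} is in hand it is a finite verification, but it is the step I expect to be the most delicate.

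Granting the first assertion, the second and fourth are quick. The orbit $\mathcal{O}_f\cong G_n(\mathbb{C})/L$ is a smooth irreducible variety and $\mathrm{p}$ is a $P$-equivariant morphism, so $\mathrm{p}(\mathcal{O}_f)$ is irreducible; since $P\cdot(g_{I^o}\cdot f)$ is open and dense in $\mathcal{O}_f$, its image $\Omega:=P\cdot\mathrm{p}(g_{I^o}\cdot f)$ is dense in $\mathrm{p}(\mathcal{O}_f)$, and being a single $P$-orbit it is locally closed, hence open there (a dense locally closed subset of an irreducible space is open); it is the unique dense orbit because any two dense subsets of an irreducible space meet. By the first assertion, $\Omega$ is the image of no $P$-orbit other than $P\cdot(g_{I^o}\cdot f)$, so $\mathrm{p}^{-1}(\Omega)=P\cdot(g_{I^o}\cdot f)$ is a single $P$-orbit and the reduced space $\mathrm{p}^{-1}(\Omega)/P$ is a point.

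It remains to prove that $\mathrm{p}$ is proper over $\Omega$. Put $H_1=\mathrm{Stab}_P(g_{I^o}\cdot f)$ and $H_2=\mathrm{Stab}_P(\mathrm{p}(g_{I^o}\cdot f))$; equivariance gives $H_1\subseteq H_2$, and over $\Omega$ the map $\mathrm{p}$ is the natural projection $\mathrm{p}^{-1}(\Omega)=P/H_1\to\Omega=P/H_2$, which is proper as soon as $H_2/H_1$ is compact. I would show $\dim H_1=\dim H_2$. As $P\cdot(g_{I^o}\cdot f)$ is open in $\mathcal{O}_f$, $\dim H_1=\dim P-\dim\mathcal{O}_f=\dim P-\dim G_n(\mathbb{C})+\dim\mathrm{Stab}_G(\xi)$. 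By Proposition~\ref{obc}, $\mathrm{p}(g_{I^o}\cdot f)$ has depth $d=\sum_j k_{jr_j}$, and in its normal form $\mathrm{pr}'(\mathrm{diag}(A,J_d))$ the matrix $A\in\mathfrak{g}_{n-d}(\mathbb{C})$ is obtained from $\xi$ by deleting, for each eigenvalue $a_j$, one Jordan block of the maximal size $k_{jr_j}$; so by Theorem~\ref{obs}, $\dim H_2=\dim\mathrm{Stab}_{G_{n-d}(\mathbb{C})}(A)+\dim N_{n-d+1}(\mathbb{C})=\dim\mathrm{Stab}_{G_{n-d}(\mathbb{C})}(A)+(n-d)$. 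Using $\dim P=\dim G_n(\mathbb{C})-n$, the desired equality reduces to $\dim\mathrm{Stab}_G(\xi)-\dim\mathrm{Stab}_{G_{n-d}(\mathbb{C})}(A)=2n-d$, which holds because removing one largest Jordan block of size $k$ from a generalised eigenspace of dimension $\nu$ lowers the centraliser dimension by exactly $2\nu-k$ (the centraliser dimension of a nilpotent with block sizes $\lambda_1\geq\cdots\geq\lambda_t$ being $\sum_{p,q}\min(\lambda_p,\lambda_q)$); summing the contributions $2n_j-k_{jr_j}$ and using $\sum_j n_j=n$, $\sum_j k_{jr_j}=d$ gives $2n-d$. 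Hence $H_1$ is a closed subgroup of $H_2$ of full dimension, so it contains the identity component and is of finite index (as $H_2$ has finitely many components); thus $P/H_1\to P/H_2$ is a finite covering, in particular proper, and $\mathrm{p}^{-1}(C)$ is compact for every compact $C\subseteq\Omega$. The case $k=\mathbb{R}$ follows from the case $k=\mathbb{C}$ as noted at the start of the section.
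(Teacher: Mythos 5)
Your proposal is correct, and it follows the same skeleton as the paper (everything is reduced to the normal forms of Proposition~\ref{obc}, plus a comparison of the two stabilizers $H_1=\mathrm{Stab}_P(g_{I^o}\cdot f)\subseteq H_2=\mathrm{Stab}_P(\mathrm{p}(g_{I^o}\cdot f))$ for properness), but the execution differs at the two places where the paper is terse. For the first and fourth assertions the paper simply says they ``follow from Proposition~\ref{obc}''; you supply the actual combinatorial injectivity argument. It is essentially right, but one point should be made explicit: conditions (1) and (2) give not only that the inserted sizes $e_h=\overline{k}_{jh}-\overline{x}_{jh}+\overline{x}_{j(h-1)}$ are strictly increasing, but the interlacing $\overline{k}_{j(h-1)}<e_h<\overline{k}_{jh}$, which is what guarantees that no inserted size coincides with a removed size; hence in the multiset comparison of Jordan types the removed sizes are exactly those whose multiplicity drops by one and the (positive) inserted sizes exactly those whose multiplicity rises by one, so $R_j$ and the pairing are recoverable (with the harmless edge case $e_1=0$, detected because only $q_j-1$ inserted sizes are visible). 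For properness the paper computes both stabilizers explicitly (only in the case $m=1$, asserting the general case) and claims $H_2/H_1$ is a single element; you instead prove $\dim H_1=\dim H_2$ by the centralizer-dimension formula $\sum_{p,q}\min(\lambda_p,\lambda_q)$, getting the drop $2n_j-k_{jr_j}$ per eigenvalue and the total $2n-d$, and conclude $H_2/H_1$ is finite, which suffices. Your count is cleaner and uniform in $m$, at the cost of a weaker conclusion (finite rather than trivial fiber); note that since the centralizer of a matrix in $\mathrm{GL}_{n-d}(\mathbb{C})$ is connected, $H_2$ is in fact connected, so your equality of dimensions actually forces $H_1=H_2$ and recovers the paper's stronger statement. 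Your dimension bookkeeping ($\dim H_1=\dim\mathrm{Stab}_G(\xi)-n$, $\dim H_2=\dim\mathrm{Stab}_{G_{n-d}(\mathbb{C})}(A)+n-d$) and the identification $\mathrm{p}^{-1}(\Omega)=P\cdot(g_{I^o}\cdot f)$ via the first assertion are correct.
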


\begin{proof}
The first statement and the last statement is from the Proposition \ref{obc}. Let's check the properness.

In the case of $m=1$, set $r=r_1$, $k_i=k_{1i}$, $l_i=l_{1i}$ for $1\leq i\leq r_1$.

As Lemma \ref{st}, $\mathrm{Stab}_P(g_{I^o}\cdot f)\cong \mathrm{Stab}_L(v_{I^{o}})$. For convenience, we denote $G_k(\mathbb{C})$ by $G_k$ for any $k\in \mathbb{Z}_+$.
We know that
\[L\cong (G_{l_1}\times G_{l_2}\times \cdots G_{l_r})\ltimes N(\sum_{i=1}^{r-1}k_il_i\cdot(2(l_{i+1}+\cdots+l_r))+\sum_{i=1}^rl_i^2(k_i-1)),\]
where $N(a)$ means the unipotent group which is homeomorphic to $\mathbb{C}^{a}$ as manifold.

And
\begin{align*}\mathrm{Stab}_L(v_{I^{o}})\cong& (G_{l_1}\times G_{l_2}\times \cdots G_{l_r-1})\ltimes N(\sum_{i=1}^{r-1}k_il_i\cdot(2(l_{i+1}+\cdots+l_r))\\&+\sum_{i=1}^rl_i^2(k_i-1)+(l_r-1)-(\sum_{i=1}^{r-1}k_il_i+(k_r-1)l_r)).
\end{align*}

Similarly, we have
\begin{align*}\mathrm{Stab}_P(\mathrm{p}(g_{I^{o}}\cdot f))\cong & (G_{l_1}\times G_{l_2}\times \cdots G_{l_r-1})\ltimes N(\sum_{i=1}^{r-1}k_il_i\cdot(2(l_{i+1}+\cdots+l_r-1))\\
+&\sum_{i=1}^{r-1}l_i^2(k_i-1)+(l_r-1)^2(k_r-1)+(\sum_{i=1}^{r-1}k_il_i+(l_r-1)k_r)).\end{align*}

By direct calculation, we have that the quotient
\[\mathrm{Stab}_P(\mathrm{p}(g_{I^{o}}\cdot f))/\mathrm{Stab}_P(g_{I^o}\cdot f)\]
is a single element. So the restriction of $\mathrm{p}$ on $P\cdot (g_{I^{o}}\cdot f)$ is proper.

In the case of $m>1$, one can get the statement immediately.
\end{proof}

\subsection{Moment map in the $\mathrm{GL}_n(\mathbb{R})$ case}

Now, we can  calculate the moment map in the $G_n(\mathbb{R})$ case and get the similar results as the case of $G_n(\mathbb{C})$.

Let $n\in \mathbb{Z}_+$, let $G=G_n(\mathbb{R})$ and let $P=P_n(\mathbb{R})$ be the mirabolic subgroup of $G$. $\mathfrak{g}$ (resp. $\mathfrak{p}$) denotes the Lie algebra of $G$ (resp. of $P$).

Write
\[J_s(a)=\left(\begin{array}{cccc} a& &&\\ 1&a &&\\ &\ddots &\ddots &\\ && 1& a\end{array}\right)\in \mathfrak{g}_s(\mathbb{R}), s\in \mathbb{Z}\ \text{and}\ s\geq 2, a\in \mathbb{R},  \]
and $J_1(a)=a_{1\times 1}, a\in \mathbb{R}$. Set $J_s=J_s(0)$, $\forall s\in \mathbb{Z}_+$.
Write
\[J_k^l(a)=\mathrm{diag}(\underbrace{J_k(a),\cdots,J_k(a)}_{l\ \text{blocks}}),\]
$\forall l\in \mathbb{Z}_+$.
Write
\[J_k(a,b)=\left(\begin{array}{cc} J_k(a) & bI_k\\ -bI_k & J_k(a)\end{array}\right), \forall  a,b\in \mathbb{R}, k\in \mathbb{Z}_+,\]
and
\[J_k^l(a,b)=\mathrm{diag}(\underbrace{J_k(a,b),\cdots,J_k(a,b)}_{l\ \text{blocks}}),\]

Let $u,v\in \mathbb{N}$ such that $2u+v\leq n$.
Let $a_1,\cdots,a_{u+v},b_1,\cdots,b_u\in \mathbb{R}$ such that $z_{2k-1}=a_k+ib_k$, $z_{2k}=a_k-ib_k$, for $k=1,\cdots, u$, and $z_{2u+k}=a_{u+k}$ for $1\leq k\leq v$, are $2u+v$ distinct complex number,  $\prod_{1\leq i\leq u}b_i\neq 0$.

Let
\[ \xi = \mathrm{diag}(\xi_1,\cdots,\xi_{u+v}),\]
with
\[\xi_j=\left\{\begin{array}{c}
\mathrm{diag}(J_{k_{j1}}^{l_{j1}}(a_j,b_j),J_{k_{j2}}^{l_{j2}}(a_j,b_j),\cdots,J_{k_{jr_j}}^{l_{jr_j}}(a_j,b_j)),   1\leq j\leq u,\\
\mathrm{diag}(J_{k_{j1}}^{l_{j1}}(a_j),J_{k_{j2}}^{l_{j2}}(a_j),\cdots,J_{k_{jr_j}}^{l_{jr_j}}(a_j)),  u+1\leq j\leq u+v,\end{array}\right.\]
where $r_j\in \mathbb{Z}_+, 1\leq j\leq u+v$, $l_{ji}, k_{ji}\in \mathbb{Z}_+$, $\forall 1\leq j\leq u+v, 1\leq i\leq r_j$, and $k_{ji_1}< k_{ji_2}$ for $1\leq j\leq {u+v}$, $1\leq i_1<i_2\leq r_i$, and
\[\sum_{1\leq j\leq u, 1\leq i\leq r_j}2k_{ji}l_{ji}+\sum_{u+1\leq j\leq u+v,1\leq i\leq r_j}k_{ji}l_{ji}=n.\]
Set $f=\mathrm{pr}(\xi)$. It is clear that every orbit in $\mathfrak{g}^*$ is of the form $\mathcal{O}_f$.

Now, we fix a $\xi$. Set
\[n_j=\left\{\begin{array}{ll}\sum_{i=1}^{r_j}2k_{ji}\cdot l_{ji}, & 1\leq j\leq u,\\
\sum_{i=1}^{r_j}k_{ji}\cdot l_{ji}, & u+1\leq j\leq u+v,\end{array}\right.\]
and $n_0=0$. For convenience, set $k_{j0}=0, l_{j0}=0$ for $1\leq j\leq u+v$.

Set $L=\mathrm{Stab}_G(f)$, then
\[P\setminus\mathcal{O}_f\cong P\setminus G/L \cong (\mathbb{R}^n-\{0\})/L.\]
By calculating the representatives of $(\mathbb{R}^n-\{0\})/L$, one can get the representatives of the $P$-orbits in $\mathcal{O}_f$.

Since $\xi$ is block diagonal,
\[L=\mathrm{diag}(X_1,\cdots,X_{u+v})\ |\ X_i\in\mathrm{Stab}_{\mathrm{GL}_{n_j}(\mathbb{R})}(\xi_{j})\}.\]

For $u+1\leq j\leq u+v$, one can obtain $L_j=\mathrm{Stab}_{\mathrm{GL}_{n_j}(\mathbb{R})}(\xi_{j})$ from the result in the case of $G_n(\mathbb{C})$ by changing the field $\mathbb{C}$ to $\mathbb{R}$ (see Proposition \ref{Lj}).

For $1\leq j\leq u$, let's calculate $L_j$.
Let $M_{s\times s}(\mathbb{R})$ be the set of all $s\times s$ matrices over $\mathbb{R}$. Write
\[K(x_1,\cdots,x_s)=\left(\begin{array}{cccc} x_1&&&\\ x_2& x_1 &&\\ \vdots& \ddots & \ddots &\\ \ x_s &\cdots &x_2&x_1\end{array}\right), x_i\in \mathbb{R}, 1\leq i\leq s,\]
then
\[\{X\in M_{s\times s}(\mathbb{R})\ |\ XJ_s=J_sX\}=\{K(x_1,\cdots,x_s)\ |\  x_i\in \mathbb{R}, 1\leq i\leq s\}.\]

For any $\vec{x}=(x_1,\cdots,x_s), \vec{y}=(y_1,\cdots,y_s) \in \mathbb{R}^s$, define $H(\vec{x},\vec{y})$ as the $2s\times 2s$ matrix
\[\left(\begin{array}{cc} K(x_1,\cdots,x_s) &K(y_1,\cdots,y_s) \\
-K(y_1,\cdots,y_s)& K(x_1,\cdots,x_s)\end{array}\right).\]
We have the following proposition.

\begin{Proposition}\label{Lkl} For any $a,b\in \mathbb{R}$, $b\neq 0$, $k,l\in \mathbb{Z}_+$,
\begin{align*}H_{k,l}&=\mathrm{Stab}_{\mathrm{GL}_{2k\cdot l}(\mathbb{R})}(J_k^l(a,b))\\
&=\{\left(\begin{array}{ccc}
H(\vec{x}_{11},\vec{y}_{11})&\cdots &H(\vec{x}_{1l},\vec{y}_{1l})\\
\vdots &\ddots &\vdots\\
H(\vec{x}_{l1},\vec{y}_{l1})&\cdots &H(\vec{x}_{ll},\vec{y}_{ll})
\end{array}\right) |\ \vec{x}_{ij}, \vec{y}_{ij}\in \mathbb{R}^{k}, \forall 1\leq i,j\leq l,
\\& \det\left(\begin{array}{cc}A&B\\ -B&A\end{array}\right)\neq 0,  A=(\vec{x}_{ij}(1))_{1\leq i,j\leq l}, B=(\vec{y}_{ij}(1))_{1\leq i,j\leq l}\}.\end{align*}
\end{Proposition}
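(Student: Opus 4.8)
The plan is to mimic the proof of Proposition \ref{LK1}, exploiting the realification isomorphism that identifies a $2\times 2$ block matrix of the shape $\begin{pmatrix}C&D\\-D&C\end{pmatrix}$ with the complex matrix $C+iD$.

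\emph{Reduction to a single block.} Write $X\in M_{2kl\times 2kl}(\mathbb{R})$ in $l\times l$ blocks $X=(X_{ij})$, each $X_{ij}$ of size $2k\times 2k$. Since $J_k^l(a,b)$ is block diagonal with all diagonal blocks equal to $J_k(a,b)$, the relation $XJ_k^l(a,b)=J_k^l(a,b)X$ is equivalent to $X_{ij}J_k(a,b)=J_k(a,b)X_{ij}$ for all $i,j$. Thus it suffices to show that the centralizer $Z:=\{Y\in M_{2k\times 2k}(\mathbb{R}):YJ_k(a,b)=J_k(a,b)Y\}$ equals $\{H(\vec{x},\vec{y}):\vec{x},\vec{y}\in\mathbb{R}^k\}$, and then to determine when an $l\times l$ block matrix with entries in $Z$ is invertible.

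\emph{Computing $Z$.} Put $\mathcal{J}_k=\begin{pmatrix}0&I_k\\-I_k&0\end{pmatrix}$, a complex structure on $\mathbb{R}^{2k}$. Both $J_k(a,b)$ and every $H(\vec{x},\vec{y})$ commute with $\mathcal{J}_k$, and on matrices commuting with $\mathcal{J}_k$ the assignment $\begin{pmatrix}C&D\\-D&C\end{pmatrix}\mapsto C+iD$ is a ring homomorphism into $M_k(\mathbb{C})$ sending $J_k(a,b)$ to $J_k(a)+ibI_k=J_k(a+ib)$ and $H(\vec{x},\vec{y})$ to $K(\vec{x})+iK(\vec{y})=K(\vec{x}+i\vec{y})$. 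Because $K(\vec{x}+i\vec{y})$ commutes with $J_k$ (by the formula for $\{X:XJ_s=J_sX\}$ recalled above) and trivially with $(a+ib)I_k$, it commutes with $J_k(a+ib)$; hence $H(\vec{x},\vec{y})\in Z$, giving $\{H(\vec{x},\vec{y})\}\subseteq Z$. For the reverse inclusion I would complexify: over $\mathbb{C}$ the matrix $J_k(a,b)$ is conjugate to $J_k(a+ib)\oplus J_k(a-ib)$ (pass to the coordinates $v_1\pm iv_2$), and since $b\neq0$ these two eigenvalues are distinct, each occurring in one $k\times k$ Jordan block, so $\dim_{\mathbb{R}}Z=\dim_{\mathbb{C}}Z_{M_{2k}(\mathbb{C})}(J_k(a,b))=k+k=2k=\dim_{\mathbb{R}}\{H(\vec{x},\vec{y})\}$, forcing $Z=\{H(\vec{x},\vec{y})\}$. (One can instead argue without complexification: writing $Y=\begin{pmatrix}P&Q\\R&S\end{pmatrix}$ in $k\times k$ blocks and expanding $YJ_k(a,b)=J_k(a,b)Y$, one gets $[P+S,J_k(a)]=0$, $[Q-R,J_k(a)]=0$ and $\mathrm{ad}_{J_k(a)}^2(P-S)=-4b^2(P-S)$; as $\mathrm{ad}_{J_k(a)}$ is nilpotent, $\mathrm{ad}_{J_k(a)}^2+4b^2I$ is invertible, whence $P=S$, $Q=-R$, and then $P,Q$ commute with $J_k(a)$, i.e.\ $Y=H(\vec{x},\vec{y})$.)

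\emph{Invertibility.} If $X=(H(\vec{x}_{ij},\vec{y}_{ij}))_{ij}$ then $X$ commutes with the global complex structure $\mathcal{J}=\mathrm{diag}(\mathcal{J}_k,\dots,\mathcal{J}_k)$, so $X$ is $\mathbb{C}$-linear and corresponds to the complex $kl\times kl$ matrix $\widetilde{X}=(K(\vec{x}_{ij}+i\vec{y}_{ij}))_{ij}$, with $\det_{\mathbb{R}}X=|\det_{\mathbb{C}}\widetilde{X}|^2$. By Proposition \ref{LK1}, $\det_{\mathbb{C}}\widetilde{X}=\bigl(\det(\vec{x}_{ij}(1)+i\vec{y}_{ij}(1))_{1\le i,j\le l}\bigr)^k=\det(A+iB)^k$, so $\det_{\mathbb{R}}X=|\det(A+iB)|^{2k}=\bigl(\det(A+iB)\det(A-iB)\bigr)^k=\det\begin{pmatrix}A&B\\-B&A\end{pmatrix}^k$. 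Hence $X$ lies in $\mathrm{GL}_{2kl}(\mathbb{R})$ exactly when $\det\begin{pmatrix}A&B\\-B&A\end{pmatrix}\neq0$, which is the asserted description of $H_{k,l}$. The only real work is the reverse inclusion $Z\subseteq\{H(\vec{x},\vec{y})\}$ in the middle step — ruling out centralizing matrices not of the block-$\mathcal{J}_k$-linear shape — and either the dimension count or the $\mathrm{ad}$-nilpotency argument disposes of it; the rest is bookkeeping parallel to the $\mathrm{GL}_n(\mathbb{C})$ case.
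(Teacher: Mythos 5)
Your proof is correct. The paper disposes of this proposition in one line (``it can be proved similarly as Proposition \ref{LK1}''), and the model proof of Proposition \ref{LK1} is itself just a direct determinant computation, so the intended argument is a bare-hands analogue of the complex case; what you do differently is to make the word ``similarly'' literal by transporting the whole problem through the complex structure $\mathcal{J}=\mathrm{diag}(\mathcal{J}_k,\dots,\mathcal{J}_k)$: the block $(i,j)$ entries $H(\vec{x}_{ij},\vec{y}_{ij})$ become $K(\vec{x}_{ij}+i\vec{y}_{ij})$, so the centralizer description and the invertibility criterion are both inherited from Proposition \ref{LK1} together with the standard identities $\det_{\mathbb{R}}X=|\det_{\mathbb{C}}\widetilde{X}|^2$ and $\det(A+iB)\det(A-iB)=\det\left(\begin{array}{cc}A&B\\ -B&A\end{array}\right)$, which yields exactly $\det_{\mathbb{R}}X=\det\left(\begin{array}{cc}A&B\\ -B&A\end{array}\right)^k$ and hence the stated nondegeneracy condition. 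You also supply the one step the paper leaves entirely implicit, namely the reverse inclusion $Z\subseteq\{H(\vec{x},\vec{y})\}$ for a single block; both of your arguments for it work (the complexified dimension count using that $J_k(a,b)$ has Jordan form $J_k(a+ib)\oplus J_k(a-ib)$ with $a+ib\neq a-ib$, or the elementary computation showing $P=S$, $Q=-R$ via the invertibility of $\mathrm{ad}_{J_k(a)}^2+4b^2$). The trade-off: the paper's route is shorter to state but hides the reverse inclusion and the determinant identity; yours reuses Proposition \ref{LK1} wholesale and produces the exact determinant formula matching the condition in the statement, at the cost of setting up the realification formalism.
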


\begin{proof}
It can be proved similarly as Proposition \ref{LK1}.
\end{proof}

For any $k_1,k_2 \in \mathbb{Z}_+$ such that $k_2\geq k_1$, and $\vec{x}=(x_1,\cdots,x_{k_1})$, $\vec{y}=(y_1,\cdots,y_{k_1})\in \mathbb{R}^{k_1}$, define $H_1(\vec{x},\vec{y})$ as the $2k_1\times 2k_2$ matrix
\[\left(\begin{array}{cccc} K(x_1,\cdots,x_{k_1})& 0_{k_1\times (k_2-k_1)} &K(y_1,\cdots,y_{k_1})&0_{k_1\times (k_2-k_1)}\\
-K(y_1,\cdots,y_{k_1})& 0_{k_1\times (k_2-k_1)} &K(x_1,\cdots,x_{k_1})&0_{k_1\times (k_2-k_1)}\end{array}\right),\]
and define $H_2(\vec{x},\vec{y})$ as the $2k_2\times 2k_1$ matrix
\[\left(\begin{array}{cc} 0_{(k_2-k_1)\times k_1}& 0_{(k_2-k_1)\times k_1}\\
 K(x_1,\cdots,x_{k_1})&K(y_1,\cdots,y_{k_1})\\
 0_{(k_2-k_1)\times k_1}& 0_{(k_2-k_1)\times k_1}\\
-K(y_1,\cdots,y_{k_1})& K(x_1,\cdots,x_{k_1})\end{array}\right).\]

\begin{Proposition}
For any $a,b\in \mathbb{R}$, $b\neq 0$, $k_1,k_2,l_1,l_2\in \mathbb{Z}_+$, $k_1<k_2$,
\begin{align*}M_{k_1,k_2}^{l_1,l_2}&=\{X\in M_{2k_1l_1\times 2k_2l_2}(\mathbb{R})\ |\ X(J_{k_2}^{l_2}(a,b))=(J_{k_1}^{l_1}(a,b))X\}\\
&=\{\left(\begin{array}{ccc}H_1(\vec{x}_{11},\vec{y}_{11})&\cdots &H_1(\vec{x}_{1l_2},\vec{y}_{1l_2})\\
\vdots &\ddots &\vdots\\
H_1(\vec{x}_{l_11},\vec{y}_{l_11})&\cdots &H_1(\vec{x}_{l_1l_2},\vec{y}_{l_1l_2})\end{array}\right)\\
& |\ \vec{x}_{ij},  \vec{y}_{ij} \in \mathbb{R}^{k_1}, \forall 1\leq i\leq l_1, 1\leq j\leq l_2\},\end{align*}
\begin{align*}N_{k_1,k_2}^{l_1,l_2}&=\{X\in M_{2k_2l_2\times 2k_1l_1}(\mathbb{R})\ |\ X(J_{k_1}^{l_1}(a,b))=(J_{k_2}^{l_2}(a,b))X\}
\\&=\{\left(\begin{array}{ccc}H_2(\vec{x}_{11},\vec{y}_{11})&\cdots &H_2(\vec{x}_{1l_1},\vec{y}_{1l_1})\\
\vdots &\ddots &\vdots\\
H_2(\vec{x}_{l_21},\vec{y}_{l_21})&\cdots &H_2(\vec{x}_{l_2l_1},\vec{y}_{l_2l_1})\end{array}\right)\\&
 |\ \vec{x}_{ij}, \vec{y}_{ij} \in \mathbb{R}^{k_1}, \forall 1\leq i\leq l_2, 1\leq j\leq l_1\}.\end{align*}
\end{Proposition}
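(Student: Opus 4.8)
The plan is to endow $\mathbb{R}^{2k}$ with a complex structure canonically attached to $J_k(a,b)$, which forces every real intertwiner to be complex-linear and thereby reduces both set equalities to the description of $M_{k_1,k_2}^{l_1,l_2}$ and $N_{k_1,k_2}^{l_1,l_2}$ over $\mathbb{C}$ obtained earlier, applied with the complex scalar $a+ib$ in place of $a$.

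First I would record the real Jordan--Chevalley decomposition $J_k(a,b)=S+N$, where $S=\left(\begin{smallmatrix}aI_k & bI_k\\ -bI_k & aI_k\end{smallmatrix}\right)$ is semisimple, $N=\mathrm{diag}(J_k,J_k)$ is nilpotent, and $SN=NS$. Since $\mathbb{R}$ is a perfect field, $S$ and $N$ are polynomials in $J_k(a,b)$, and the same holds blockwise for $J_k^l(a,b)$. Because $b\neq 0$, the operator $\mathcal{J}:=b^{-1}(S-aI_{2k})=\left(\begin{smallmatrix}0 & I_k\\ -I_k & 0\end{smallmatrix}\right)$ satisfies $\mathcal{J}^2=-I_{2k}$ and is again a polynomial in $J_k(a,b)$. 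Identifying $(\mathbb{R}^{2k},\mathcal{J})$ with $\mathbb{C}^k$, one checks that $J_k(a,b)$ becomes the $\mathbb{C}$-linear operator $J_k(a')$ for a fixed $a'\in\{a+ib,\,a-ib\}$ (the choice depending only on the orientation convention), hence $J_{k_i}^{l_i}(a,b)$ becomes $J_{k_i}^{l_i}(a')$.

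Next, let $X$ be any real matrix with $X\,J_{k_2}^{l_2}(a,b)=J_{k_1}^{l_1}(a,b)\,X$. Applying the intertwining relation to the (polynomial) semisimple parts on each side shows that $X$ intertwines the two copies of $S$, hence the two copies of $\mathcal{J}$; that is, $X$ is $\mathbb{C}$-linear for the complex structures above. Thus $X$ corresponds to a complex matrix $\widetilde{X}$ with $\widetilde{X}\,J_{k_2}^{l_2}(a')=J_{k_1}^{l_1}(a')\,\widetilde{X}$, and the complex description of $M_{k_1,k_2}^{l_1,l_2}$ (resp. $N_{k_1,k_2}^{l_1,l_2}$) obtained earlier says that $\widetilde{X}$ is assembled from blocks $[K(\vec z_{ij}),0]$ (resp. $\left(\begin{smallmatrix}0\\ K(\vec z_{ij})\end{smallmatrix}\right)$) with $\vec z_{ij}\in\mathbb{C}^{k_1}$. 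Writing $\vec z_{ij}=\vec x_{ij}+i\vec y_{ij}$ with $\vec x_{ij},\vec y_{ij}\in\mathbb{R}^{k_1}$ and translating the identification $\mathbb{C}^k\cong\mathbb{R}^{2k}$ back, each such complex block becomes exactly an $H_1(\vec x_{ij},\vec y_{ij})$ (resp. $H_2(\vec x_{ij},\vec y_{ij})$), which is the asserted real form. Conversely, each matrix of the stated shape is, under the same identification, $\mathbb{C}$-linear and of the form produced by the complex Proposition, hence intertwines; this direction is a direct block check. The statement for $N_{k_1,k_2}^{l_1,l_2}$ then also follows by exchanging the roles of $(k_1,l_1)$ and $(k_2,l_2)$.

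The only genuinely delicate point will be the bookkeeping: matching the real $2\times 2$ block pattern of $H(\vec x,\vec y)$, $H_1$, $H_2$ (including the sign in the off-diagonal $-K(\vec y)$ blocks) with the complex blocks $K(\vec x+i\vec y)$ under a consistent choice of the complex structure $\mathcal{J}$ and of the ordering of the $2k$ real coordinates; once these conventions are pinned down the translation is mechanical. Alternatively one can dispense with the complex structure entirely and verify the two set equalities by the same kind of direct block computation that proves the complex case --- equally routine but more opaque.
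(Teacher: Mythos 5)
Your proposal is correct, but it argues differently from the paper: the paper disposes of this proposition with a direct block verification (``it is easy to check''), exactly as for its complex counterpart, whereas you reduce the real statement to the already-established complex one. Your reduction is sound: $J_k(a,b)=S+N$ with $S=\left(\begin{smallmatrix}aI_k&bI_k\\-bI_k&aI_k\end{smallmatrix}\right)$ semisimple, $N=\mathrm{diag}(J_k,J_k)$ nilpotent and commuting, so $\mathcal{J}=b^{-1}(S-aI_{2k})$ is a genuine complex structure; an intertwiner of $J_{k_2}^{l_2}(a,b)$ and $J_{k_1}^{l_1}(a,b)$ intertwines the semisimple parts and hence the two copies of $\mathcal{J}$, so it is $\mathbb{C}$-linear, and with the identification $(u,v)\mapsto u-iv$ the operator $J_k(a,b)$ becomes $J_k(a+ib)$ while the complex blocks $K(\vec x+i\vec y)$, $[K,0]$, $\left(\begin{smallmatrix}0\\K\end{smallmatrix}\right)$ translate precisely into $H(\vec x,\vec y)$, $H_1$, $H_2$ with the stated signs. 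One point you state a bit loosely is that $X$ ``intertwines the polynomial semisimple parts on each side'': this needs either the standard fact that an intertwiner of two matrices intertwines their Jordan--Chevalley components (e.g.\ via generalized eigenspaces, or the block-matrix trick applied to $\mathrm{diag}(B,A)$), or the observation that here a single interpolation polynomial computes the semisimple part of both sides because they share the eigenvalues $a\pm ib$; either patch is routine. What your route buys is a conceptual explanation of why the real answer is the complex answer read through a complex structure, with no new computation and with the sign conventions forced by the identification; what the paper's direct check buys is brevity and independence from the complex case, at the cost of redoing the same block manipulation and leaving the $\mathbb{R}$--$\mathbb{C}$ relation implicit (the paper does exploit exactly this kind of real-form reasoning later, e.g.\ in the proof of Proposition \ref{obr}).
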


\begin{proof}
It is easy to check.
\end{proof}

For convenience, we fix a $1\leq j\leq u$ and set $r=r_j$, $k_i=k_{ji}, l_i=l_{ji}$, for $1\leq i\leq r_j$ in the Proposition \ref{Ljr}.

\begin{Proposition}\label{Ljr} The stabilizer
\begin{align*}L_{j}&=\mathrm{Stab}_{\mathrm{GL}_{n_j}(\mathbb{R})}(\xi_{j})=
\{\left(\begin{array}{cccc}
X_{11}&X_{12}&\cdots &X_{1r}\\
X_{21}&X_{22}&\cdots &X_{2r}\\
\vdots &\vdots &\ddots &\vdots\\
X_{r1}&X_{r2}&\cdots &X_{rr}
\end{array}\right)
 \ |\ X_{ss}\in H_{k_s,l_s}, \forall 1\leq s\leq r,
\\& X_{st}\in M_{k_s,k_t}^{l_s,l_t}, \forall 1\leq s<t\leq r,
X_{st}\in N_{k_s,k_t}^{l_s,l_t}, \forall 1\leq t<s\leq r\}. \end{align*}
\end{Proposition}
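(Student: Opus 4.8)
The plan is to follow the proof of Proposition \ref{Lj} essentially verbatim, the only differences being that the ground field is now $\mathbb{R}$ instead of $\mathbb{C}$ and that the elementary building block is the real matrix $J_k(a_j,b_j)$, which encodes the complex conjugate pair $a_j\pm ib_j$ of eigenvalues of $\xi_j$. First I would compute the centralizer of $\xi_j$ in the full matrix algebra $M_{n_j\times n_j}(\mathbb{R})$. Decomposing $X=(X_{st})_{1\le s,t\le r}$ into blocks conforming to $\xi_j=\mathrm{diag}(J_{k_1}^{l_1}(a_j,b_j),\cdots,J_{k_r}^{l_r}(a_j,b_j))$, the single equation $X\xi_j=\xi_j X$ decouples into the $r^2$ block equations $X_{st}J_{k_t}^{l_t}(a_j,b_j)=J_{k_s}^{l_s}(a_j,b_j)X_{st}$. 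By the two propositions immediately preceding \ref{Ljr}, the solution space of the $(s,t)$ equation is $M_{k_s,k_t}^{l_s,l_t}$ when $s<t$, is $N_{k_s,k_t}^{l_s,l_t}$ when $s>t$, and, when $s=t$, is the space of block-Toeplitz matrices of the $H(\vec{x},\vec{y})$-shape appearing in Proposition \ref{Lkl} but without the determinant condition. This yields the inclusion $L_j\subseteq\{(X_{st}) : X_{ss}\text{ of }H\text{-shape},\ X_{st}\in M_{k_s,k_t}^{l_s,l_t}\ (s<t),\ X_{st}\in N_{k_s,k_t}^{l_s,l_t}\ (s>t)\}$.

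Second, I would show that such a block matrix $X$ lies in $\mathrm{GL}_{n_j}(\mathbb{R})$ if and only if each diagonal block $X_{ss}$ is invertible, equivalently $X_{ss}\in H_{k_s,l_s}$ by Proposition \ref{Lkl}. The crux is the determinant identity $\det X=\prod_{s=1}^{r}\det X_{ss}$. Because $k_1<\cdots<k_r$, the off-diagonal blocks in $M_{k_s,k_t}^{l_s,l_t}$ and $N_{k_s,k_t}^{l_s,l_t}$ have block-echelon shape — a block mapping a shorter Jordan string into a longer one has zero columns, and conversely has zero rows — so a cofactor expansion, or an induction on $r$ peeling off the largest Jordan block, shows that the off-diagonal blocks do not contribute to the determinant and $\det X$ collapses to $\prod_s\det X_{ss}$; moreover each $\det X_{ss}$ is a power of $\det\left(\begin{smallmatrix}A_s&B_s\\-B_s&A_s\end{smallmatrix}\right)$ in the notation of Proposition \ref{Lkl}, hence is nonzero exactly under the condition defining $H_{k_s,l_s}$. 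A conceptually cleaner alternative, which I would at least record, is to complexify: after extension of scalars and a basis change, $\xi_j$ becomes $\mathrm{diag}(\Xi_j^{+},\Xi_j^{-})$ with $\Xi_j^{\pm}$ of exactly the shape treated in Proposition \ref{Lj} for the eigenvalue $a_j\pm ib_j$; since $a_j+ib_j\ne a_j-ib_j$, the complexified centralizer splits as a product of the two corresponding groups of Proposition \ref{Lj}, and passing to the fixed points of the real structure (which interchanges the two factors composed with complex conjugation) recovers $L_j$ together with both the block-Toeplitz description and the invertibility criterion, the determinant factorization being inherited from Proposition \ref{Lj}.

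The main obstacle is bookkeeping rather than anything conceptual. One must check that the realified $H$, $H_1$, $H_2$ block-Toeplitz patterns are stable under matrix multiplication and, above all, that their echelon structure genuinely forces $\det X=\prod_{s}\det X_{ss}$; in the complexification route the analogous chore is to verify that the chosen basis change carries the $M$- and $N$-spaces of Proposition \ref{Lj} onto the spaces $M_{k_s,k_t}^{l_s,l_t}$ and $N_{k_s,k_t}^{l_s,l_t}$ defined above. Everything else — the decoupling of $X\xi_j=\xi_j X$ and the identification of the diagonal-block commutant with the $H(\vec{x},\vec{y})$ form — is routine linear algebra already carried out in Propositions \ref{LK1} and \ref{Lkl}.
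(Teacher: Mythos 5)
Your proposal is correct and follows essentially the same route as the paper: the paper's proof of Proposition \ref{Ljr} is literally ``check directly as Proposition \ref{Lj}'', and your blockwise decoupling of $X\xi_j=\xi_j X$ via Proposition \ref{Lkl} and the two $M$/$N$ propositions, followed by the determinant factorization $\det X=\prod_s\det X_{ss}$ forced by the echelon shape of the off-diagonal blocks, is exactly that check carried out over $\mathbb{R}$. The complexification argument you sketch is a harmless optional alternative not used by the paper.
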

\begin{proof}
It can be checked directly as Proposition \ref{Lj}.
\end{proof}

Now we go to calculate $(\mathbb{R}^n-\{0\})/L$. As $L$ is block diagonal, we only need to calculate $(\mathbb{R}^{n_j}-\{0\})/L_j$. For $u+1\leq j\leq u+v$, we can get the result from the case of $G_n(\mathbb{C})$. For $1\leq j\leq u$, we have the following result.

We fix a $j$ and adopt the same notation as Proposition \ref{Ljr}.

\begin{Proposition}\label{dbr}
Write $R_0=\{1,\cdots,r\}$. For $\emptyset \neq R\subset R_0$, let $1\leq x_i\leq k_i$, for $i\in R$, such that
\begin{itemize}
\item[(1)] $x_i < x_{i'}$ for any $i,i'\in R$ such that  $i<i'$,
\item[(2)] $k_i-x_i < k_{i'}-x_{i'}$ for any $i,i'\in R$ such that $i<i'$,
\end{itemize}
we define
\[I=\{\sum_{t=0}^{i-1}2k_tl_t+k_i(2l_i-1)+x_i, i\in R\}\subseteq \{1,2,\cdots, n_j\},\]
and $v_I=(y_1,y_2,\cdots,y_{n_j})$, $y_i=1$ when $i\in I$ and $y_i=0$ when $i\notin I$. Let $I_a$ be the set of all $I$'s constructed as above.
Then $\{v_I\ |\ I\in I_a\}$ represent all different $L_j$-orbits in $\mathbb{R}^{n_j}-\{0\}$.
\end{Proposition}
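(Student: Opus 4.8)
The plan is to adapt the proof of Proposition \ref{db} essentially verbatim, with two substitutions: $\mathbb{C}$ is replaced by $\mathbb{R}$, and the scalar-commutant matrices $K(x_1,\dots,x_s)$ are replaced by their real $2\times2$-block analogues $H(\vec x,\vec y)$; accordingly Proposition \ref{Lkl} takes over the role of Proposition \ref{LK1}, and the off-diagonal block description in Proposition \ref{Ljr} takes over the role of Proposition \ref{Lj}. The reason this substitution is legitimate is that $J^{l}_{k}(a,b)=aI+bJ_0+N$ on $\mathbb{R}^{2kl}$, where $J_0$ is the block-diagonal complex structure $\bigl(\begin{smallmatrix}0&I\\-I&0\end{smallmatrix}\bigr)$ (one block per copy) and $N$ is the nilpotent part, which commutes with $J_0$; since $b\neq0$, a real matrix centralizes $J^l_k(a,b)$ iff it is $J_0$-linear and centralizes $N$. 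Thus $(\mathbb{R}^{n_j},J_0)\cong\mathbb{C}^{n_j/2}$ identifies $L_j$ with the complex stabilizer group of Proposition \ref{Lj} (for the eigenvalue $a_j\mp ib_j$, whose actual value is immaterial to the group structure), and the classification of $L_j$-orbits in $\mathbb{R}^{n_j}\setminus\{0\}$ becomes the content of Proposition \ref{db} over $\mathbb{C}$, translated back. I will use this reduction to organize the argument, but carry it out explicitly as below.

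\emph{Step 1 (block normalization).} Given $z\in\mathbb{R}^{n_j}\setminus\{0\}$, decompose it as $z=(z(t_1),\dots,z(t_r))$ along the $r$ Jordan-block sizes, $z(t_i)\in\mathbb{R}^{2k_il_i}$, and use the right action of $L_j$ — block by block, without disturbing the other blocks — to bring each $z(t_i)$ either to $0$ or to the coordinate vector $e(k_i(2l_i-1)+x_i,\,2k_il_i)$ for some $1\le x_i\le k_i$. As in Proposition \ref{db} one takes $x_i$ maximal such that the corresponding slot of $z(t_i)$ is nonzero across the $l_i$ copies, realizes $z(t_i)$ as a suitable row of an element $\tilde L\in H_{k_i,l_i}$ (Proposition \ref{Lkl}), and acts by $\tilde L^{-1}$; the only point specific to $\mathbb{R}$ is that the leading nonzero entry is now a $2\times2$ rotation--dilation block $\bigl(\begin{smallmatrix}x&y\\-y&x\end{smallmatrix}\bigr)$ rather than a scalar, so one first applies the inverse of such a block (which lies in $H_{k_i,l_i}$) to make it the unit, and the surviving index lands in the ``imaginary'' half of the last copy, producing $k_i(2l_i-1)+x_i$. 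This yields a representative $v_J$ of the shape in the statement, but without conditions (1),(2).

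\emph{Step 2 (imposing (1),(2)) and Step 3 (distinctness).} Exactly as in Proposition \ref{db}: if $x_i\ge x_{i'}$ for some $i<i'$ among the chosen indices, an explicit unipotent element of $L_j$ of the form $I_{n_j}+\sum E$ — with the elementary matrices now replaced by the $2\times2$ blocks occupying the $M$-/$N$-type off-diagonal slots of Proposition \ref{Ljr} — moves $v_J$ to the representative obtained by deleting the index $\sum 2k_tl_t+k_i(2l_i-1)+x_i$, and symmetrically when $k_i-x_i\ge k_{i'}-x_{i'}$; iterating, every orbit equals $v_I\cdot L_j$ for some $I\in I_a$. For distinctness one computes the orbit $v_I\cdot L_j$ from the block structure of $L_j$, reads off its largest coordinate index (hence the largest $i\in R$ and the corresponding $x_i$), removes it, and recovers $R$ and all the $x_i$ by downward induction, just as in the final paragraph of the proof of Proposition \ref{db}.

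The main obstacle is purely bookkeeping: verifying that the explicit elements used in Steps 1 and 2 genuinely have the $H$, $H_1$, $H_2$ block shapes prescribed by Propositions \ref{Lkl} and \ref{Ljr} — hence lie in $L_j$ — and confirming that the surviving coordinate is $k_i(2l_i-1)+x_i$ and not $2k_i(l_i-1)+x_i$. Nothing here is conceptually new beyond Proposition \ref{db}; it is only the consistent replacement of scalars by $2\times2$ complex-scalar blocks that has to be tracked through.
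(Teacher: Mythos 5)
Your proposal is correct and takes essentially the same approach as the paper, whose entire proof of this proposition is the single sentence that it ``can be proved similarly as Proposition \ref{db}''; your adaptation via the substitutions $K\mapsto H$, Proposition \ref{LK1} $\mapsto$ Proposition \ref{Lkl}, Proposition \ref{Lj} $\mapsto$ Proposition \ref{Ljr} is exactly what is intended there. Your restriction-of-scalars remark (any matrix commuting with $J^l_k(a,b)$, $b\neq 0$, commutes separately with its semisimple and nilpotent parts, so $L_j$ is the complex commutant of Proposition \ref{Lj} viewed over $\mathbb{R}$) is a clean justification the paper leaves implicit, and your index bookkeeping $k_i(2l_i-1)+x_i$ agrees with the statement.
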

\begin{proof}
It can be proved similarly as Proposition \ref{db}.
\end{proof}

Now we can get the representatives of $(\mathbb{R}^n-\{0\})/L$.

\begin{Proposition}
Set $K_0=\{1,\cdots, u+v\}$, $R_{j0}=\{1,\cdots, r_j\},\forall 1\leq j\leq u+v$. Let $\emptyset\neq K\subseteq K_0$, and $\emptyset\neq R_j\subseteq R_{j0}$ for any $j\in K$. Choose an integer $1\leq x_{ji}\leq k_{ji}$ for $j\in K, i\in R_j$, such that
\begin{itemize}
\item[(1)] $x_{j{i_1}}<x_{j{i_2}}$, for any $i_1,i_2\in R_j$ such that $i_1<i_2$,
\item[(2)] $k_{j{i_1}}-x_{j{i_1}}<k_{j{i_2}}-x_{j{i_2}}$, for any $i_1,i_2\in R_j$ such that $i_1<i_2$.
\end{itemize}
 Set
\begin{align*}I=&\bigsqcup_{j\in K,1\leq j\leq u}\{\sum_{l=0}^{j-1}n_l+\sum_{p=0}^{i-1}2k_{jp}l_{jp}+k_{ji}(2l_{ji}-1)+x_{ji}, i\in R_j\}\bigsqcup \\&\bigsqcup_{j\in K,u+1\leq j\leq u+v}\{\sum_{l=0}^{j-1}n_l+\sum_{p=0}^{i-1}k_{jp}l_{jp}+k_{ji}(l_{ji}-1)+x_{ji}, i\in R_j\},\end{align*}
and $v_I=(v_1,\cdots,v_n)$ with $v_i=1$ when $i\in I$ and $v_i=0$ when $i\notin I$. Let $I_a$ be the set of all $I$'s constructed above, then $\{v_I\ |\ I\in I_a\}$ form all different representatives of $(\mathbb{R}^n-\{0\})/L$.
\end{Proposition}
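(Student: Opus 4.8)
The plan is to reduce the computation of $(\mathbb{R}^n-\{0\})/L$ to the already solved single-block problems and then reassemble, exactly in the spirit of Proposition \ref{db1}. Since $\xi=\mathrm{diag}(\xi_1,\dots,\xi_{u+v})$ is block diagonal, $L=\mathrm{Stab}_G(\xi)$ consists of the block diagonal matrices $\mathrm{diag}(X_1,\dots,X_{u+v})$ with $X_j\in L_j:=\mathrm{Stab}_{\mathrm{GL}_{n_j}(\mathbb{R})}(\xi_j)$, and $\mathbb{R}^n=\bigoplus_{j=1}^{u+v}\mathbb{R}^{n_j}$ as an $L$-module with $L_j$ acting on the $j$-th summand. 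Hence the $L$-orbit of $v=(v_1,\dots,v_{u+v})$ is $\prod_j(L_j\cdot v_j)$, and two vectors lie in one $L$-orbit if and only if they agree block by block up to $L_j$; in particular the set $\{\,j : v_j\neq 0\,\}$ is an invariant. So a complete, irredundant list of representatives of $(\mathbb{R}^n-\{0\})/L$ is obtained by choosing $\emptyset\neq K\subseteq K_0$, declaring $v_j=0$ for $j\notin K$, and picking one representative of a nonzero $L_j$-orbit in $\mathbb{R}^{n_j}$ for each $j\in K$.

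Next I would record the two per-block classifications. For $1\leq j\leq u$, Proposition \ref{dbr} (with the description of $L_j$ in Proposition \ref{Ljr}) says the nonzero $L_j$-orbits in $\mathbb{R}^{n_j}$ are represented, pairwise distinctly, by the vectors $v_{I_j}$ with
\[I_j=\{\sum_{p=0}^{i-1}2k_{jp}l_{jp}+k_{ji}(2l_{ji}-1)+x_{ji} : i\in R_j\},\]
as $\emptyset\neq R_j\subseteq R_{j0}$ and $1\leq x_{ji}\leq k_{ji}$ range over the tuples satisfying the conditions (1), (2). For $u+1\leq j\leq u+v$ the stabilizer $L_j$ and its action on $\mathbb{R}^{n_j}$ are obtained from the $\mathrm{GL}_{n_j}(\mathbb{C})$ computation (Proposition \ref{Lj}) by replacing $\mathbb{C}$ by $\mathbb{R}$, so the proof of Proposition \ref{db} transcribes verbatim and gives the representatives $v_{I_j}$ with
\[I_j=\{\sum_{p=0}^{i-1}k_{jp}l_{jp}+k_{ji}(l_{ji}-1)+x_{ji} : i\in R_j\},\]
again under conditions (1), (2), and pairwise distinct.

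Then I would glue. Embedding each local index set $I_j\subseteq\{1,\dots,n_j\}$ into $\{1,\dots,n\}$ by adding the offset $\sum_{l=0}^{j-1}n_l$, the vector $v_I$ built from the data $(K,\{R_j\},\{x_{ji}\})$ has $j$-th block equal to $v_{I_j}$ for $j\in K$ and $0$ otherwise, and the associated $I$ is precisely the set displayed in the statement. By the previous paragraph every nonzero $L_j$-orbit arises this way, so $\{v_I : I\in I_a\}$ exhausts $(\mathbb{R}^n-\{0\})/L$. For irredundancy, if $v_I$ and $v_{I'}$ are $L$-conjugate then so are $v_{I_j}$ and $v_{I'_j}$ block by block; nonvanishing of the $j$-th block forces $j\in K\iff j\in K'$, and the distinctness parts of Propositions \ref{dbr} and \ref{db} then force $R_j=R'_j$ and $x_{ji}=x'_{ji}$ for all $j$, whence $I=I'$.

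The step I expect to require the most care is the offset bookkeeping for the complex-eigenvalue blocks $1\leq j\leq u$: since realification doubles the block size, one must verify that the index formula of Proposition \ref{dbr} (built from $2k_{jp}l_{jp}$ and $k_{ji}(2l_{ji}-1)+x_{ji}$), shifted by $\sum_{l=0}^{j-1}n_l$, really lands on the correct coordinates of $\mathbb{R}^n$ and meshes consistently with the real-eigenvalue blocks. Once this matching is checked, the remainder is a routine transcription of the argument of Proposition \ref{db1}.
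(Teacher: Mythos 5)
Your proposal is correct and follows essentially the same route as the paper: the paper's (terse) proof simply invokes the argument of Proposition \ref{db1}, i.e.\ exploit the block-diagonal form of $L$ to reduce to the per-block classifications (Proposition \ref{dbr} for $1\leq j\leq u$ and the real transcription of Proposition \ref{db} for $u+1\leq j\leq u+v$) and then glue with the offsets $\sum_{l=0}^{j-1}n_l$. Your write-up just makes explicit the exhaustion and irredundancy bookkeeping that the paper leaves implicit.
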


\begin{proof}
It can be proved similarly as Proposition \ref{db1}.
\end{proof}

We can define $g_I$ in the same way as the contents about regular orbits and such $g_I\cdot f$'s form all different representatives of $P\setminus\mathcal{O}_f$ and the orbit of $g_{I^o}\cdot f$, $I^{o}=\{n_1,n_1+n_2,\cdots,n\}$, is the unique dense open orbit among them.

We go to calculate the moment map $\mathrm{p}: \mathcal{O}_f\to \mathfrak{p}^*$. Let's fix some $I$ corresponding to  the given $K$, $R_j (j\in K)$ and $1\leq x_{ji}\leq r_j (i\in R_j,j\in K)$.
For any $1\leq j\leq u+v$, let $q_j= \# R_j$, and $R_j=\{c_{j1},\cdots, c_{jq_j}\}$, $c_{j1}<c_{j2}<\cdots<c_{jq_j}$. Set $\overline{x}_{jh}=x_{jc_{jh}}, \overline{k}_{jh}=k_{jc_{jh}}, \overline{l}_{jh}=l_{jc_{jh}}$ for $1\leq h\leq q_j$, set $d_j=\overline{x}_{jq_j}$ and $\overline{x}_{j0}=0$.

\begin{Proposition}\label{obr}
We have that $\mathrm{p}(g_I\cdot f)$ has depth
\[d=\sum_{j\in M,1\leq j\leq u}2d_j+\sum_{j\in M,u+1\leq j\leq u+v}d_j,\]
and is $P$-conjugated to the element $\mathrm{pr}'(\eta)$, where
\[\eta=\mathrm{diag}(A_1,\cdots,A_m,J_d),\]
where $A_j=\xi_j$ if $j\notin M$, and $A_j=\mathrm{diag}(A_{j1},A_{j2},\cdots, A_{jr_j})$, if $j\in M$, where $A_{ji}$'s are defined as follows, when $i\notin R_j$,
\[A_{ji}=\left\{\begin{array}{ll}J_{k_{ji}}^{l_{ji}}(a_j,b_j), &1\leq j\leq u,
\\ J_{k_{ji}}^{l_{ji}}(a_j), &u+1\leq j\leq u+v,\end{array}\right.\]
when $i=c_{jh}\in R_j$ for some $1\leq h\leq q_j$,
\[A_{ji}=\left\{\begin{array}{ll}\mathrm{diag}(J_{k_{ji}}^{l_{ji}-1}(a_j,b_j),J_t(a_j,b_j)),&1\leq j\leq u,\\ \mathrm{diag}(J_{k_{ji}}^{l_{ji}-1}(a_j),J_t(a_j)),& u+1\leq j\leq u+v,\end{array}\right.\]
here $t=k_{ji}-\overline{x}_{jh}+\overline{x}_{j(h-1)}$.
\end{Proposition}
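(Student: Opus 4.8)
The plan is to follow the proof of Proposition \ref{obc} almost line by line, substituting the real structural data (Propositions \ref{Lkl}, \ref{Ljr}, \ref{dbr}) for the complex data and using extension of scalars to $\mathbb{C}$ for the only genuinely algebraic step. First I would run the same block reduction as at the start of the proof of Proposition \ref{obc} to reduce to $l_{ji}=1$ for all $i$ and $R_j=R_{j0}$ for all $j\in M$, so that $d_j=x_{jr_j}$ and each $\xi_j$ becomes $\mathrm{diag}(J_{k_{j1}}(a_j,b_j),\dots,J_{k_{jr_j}}(a_j,b_j))$ for $1\le j\le u$ and $\mathrm{diag}(J_{k_{j1}}(a_j),\dots,J_{k_{jr_j}}(a_j))$ for $u+1\le j\le u+v$. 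Writing $A_I=g_I\cdot\xi$ and splitting according to whether $n\in I$ or not, I would then build, exactly as in Proposition \ref{obc}, a chain of representatives $\zeta_0,\zeta_1,\dots$ with $\mathrm{pr}'(A_I)$ $P$-conjugate to each $\mathrm{pr}'(\zeta_s)$; at stage $s$ the first $s$ blocks $\xi_j$ have been brought into the shape $\mathrm{diag}(A_j,J_{d_j}(a_j,b_j))$ (for $j\le u$) or $\mathrm{diag}(A_j,J_{d_j}(a_j))$ (for $j>u$) with an explicit hook vector sitting in the last row.

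Each step $\zeta_{s-1}\rightsquigarrow\zeta_s$ is carried out by reproducing Steps 1--3 of Proposition \ref{obc}: conjugate by block upper/lower triangular $P_1,P_2\in L_j$, where the relevant description of $L_j$ and of the off-diagonal spaces $M_{k_1,k_2}^{l_1,l_2}$, $N_{k_1,k_2}^{l_1,l_2}$ is now the one through the blocks $H(\vec{x},\vec{y})$, $H_1(\vec{x},\vec{y})$, $H_2(\vec{x},\vec{y})$ of Propositions \ref{Lkl}--\ref{Ljr} when $1\le j\le u$, and Proposition \ref{Lj} with $\mathbb{C}$ replaced by $\mathbb{R}$ when $u+1\le j\le u+v$. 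The one non-formal point is, as in ``Step 2'', the unique solvability over $\mathbb{R}$ of the Sylvester-type equations $MB_1-C_2M=N_2v$: after base change to $\mathbb{C}$ the matrices $B_1$ and $C_2$ are built from Jordan blocks for the pairwise distinct eigenvalues $a_j\pm ib_j$ and $a_{u+k}$, hence have disjoint spectra, so $M\mapsto MB_1-C_2M$ is invertible over $\mathbb{C}$ and therefore over $\mathbb{R}$.

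It remains to collapse the nilpotent tail. Peeling one Jordan ``row'' of width $x_{ji}$ off the $2k_{ji}\times 2k_{ji}$ block $J_{k_{ji}}(a_j,b_j)$ leaves exactly the residual block recorded in the statement together with a $2x_{ji}\times 2x_{ji}$ contribution $J_{x_{ji}}(a_j,b_j)$, and for $u+1\le j\le u+v$ the contribution is the $x_{ji}\times x_{ji}$ block $J_{x_{ji}}(a_j)$. Hence the submatrix of the final $\zeta$ indexed by the union of the peeled-off rows and columns is, up to $P$-conjugation, $\mathrm{pr}'_d$ of $\mathrm{diag}\bigl(\{J_{d_j}(a_j,b_j)\}_{j\in M,\,j\le u},\{J_{d_j}(a_j)\}_{j\in M,\,j>u}\bigr)$ plus a hook vector, and $d=\sum_{j\in M,\,j\le u}2d_j+\sum_{j\in M,\,j>u}d_j$ is precisely the number of those rows. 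This diagonal matrix is a regular element of $\mathfrak{g}_d(\mathbb{R})$, so by the real analogue of Proposition \ref{gS} --- whose proof is Lemmas \ref{st} and \ref{nst} read verbatim over $\mathbb{R}$, Theorem \ref{obs} being valid over both fields --- its full-hook moment image is strongly regular, hence lies in the unique dense $P_d(\mathbb{R})$-orbit, which by Theorem \ref{obs} is represented by $\mathrm{pr}'_d(J_d)$. Reassembling yields $\mathrm{p}(g_I\cdot f)$ $P$-conjugate to $\mathrm{pr}'(\eta)$ with depth $d$, and the case $n\notin I$ is reduced to a smaller instance of the case $n\in I$ exactly as in Proposition \ref{obc}.

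The main obstacle is bookkeeping rather than conceptual: one must track the internal $2\times 2$ structure of the complex-pair blocks through the explicit conjugations $P_1,P_2$ and check that a single Jordan-row peel off $J_{k}(a,b)$ produces precisely $J_{x}(a,b)$ of the right size, and --- most importantly --- that the accumulated tail collapses to a \emph{single} real nilpotent Jordan block $J_d$ rather than to a block-diagonal nilpotent. That last point is exactly what the real analogue of Proposition \ref{gS} (equivalently, the vanishing of $\mathrm{Stab}_P$ on the full-hook image over $\mathbb{R}$) delivers, so making that analogue explicit is the single ingredient beyond a transcription of the complex argument.
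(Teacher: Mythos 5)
Your strategy is workable, but it is a genuinely different (and much heavier) route than the one the paper takes. The paper's entire proof is a complexification-and-descent argument: it regards $\mathfrak{p}_n(\mathbb{R})$ as a real form of $\mathfrak{p}_n(\mathbb{C})$, applies the already-proved complex statement (Proposition \ref{obc}) to conclude that $\mathrm{p}(g_I\cdot f)$ and $\mathrm{pr}'(\eta)$ --- both real elements --- are $P_n(\mathbb{C})$-conjugate, and then asserts that conjugacy under $P_n(\mathbb{C})$ of two elements of $\mathfrak{p}_n(\mathbb{R})^*$ forces conjugacy under $P_n(\mathbb{R})$; no new matrix computation is done over $\mathbb{R}$ at all. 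You instead re-run the whole explicit reduction of Proposition \ref{obc} over $\mathbb{R}$, substituting the real block descriptions of Propositions \ref{Lkl}--\ref{Ljr} and \ref{dbr}, using complexification only to solve the Sylvester equations and to control the tail. What your route buys is explicitness and the avoidance of the descent step (which the paper leaves implicit and which, in your version, never has to be justified); what it costs is the considerable bookkeeping of the $2\times 2$ complex-pair structure through the conjugations $P_1,P_2$, which you acknowledge but do not carry out. One caveat in your outline should be repaired: Lemma \ref{nst} does \emph{not} read ``verbatim over $\mathbb{R}$,'' because the step ``a nonempty subvariety cut out by $n-k$ equations has codimension at most $n-k$'' is a fact about algebraically closed fields (over $\mathbb{R}$ a single equation such as $x^2+y^2=0$ can cut dimension by more than one). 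The correct real substitute is to observe that the stabilizers in question are the real points of complex algebraic subgroups defined over $\mathbb{R}$, so their real dimension equals the complex dimension of the complexified stabilizer, and then to invoke the complex Lemma \ref{nst}; with that modification your real analogue of Proposition \ref{gS}, and hence the collapse of the accumulated tail to a single block $J_d$, goes through, and the rest of your argument matches the complex template.
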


\begin{proof}
We regard $\mathfrak{p}_n(\mathbb{R})$ as the real form of $\mathfrak{p}_n(\mathbb{C})$. Use the result of $G_n(\mathbb{C})$, we get that $\mathrm{p}(g_I\cdot f)$ is $P_n(\mathbb{C})$-conjugated to $\mathrm{pr}'(\eta)$.
So $\mathrm{p}(g_I\cdot f)$ is also $P_n(\mathbb{R})$-conjugated to $\mathrm{pr}'(\eta)$.
\end{proof}

\begin{Theorem}\label{GLnR} $\mathrm{p}:\mathcal{O}_f\to \mathfrak{p}^*$ sends different $P$-orbits of $\mathcal{O}_f$ to different $P$-orbits of $\mathfrak{p}^*$, the image $p(\mathcal{O}_f)$ contains a unique dense orbit $P\cdot(g_{I^{o}}\cdot f)$, and the restriction of $\mathrm{p}$ on $P\cdot (g_{I^{o}}\cdot f)$ is proper.
The reduce space of the unique open dense orbit is singleton.
\end{Theorem}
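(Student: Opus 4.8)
The proof runs along the same two steps as that of Theorem \ref{GLnC}. The assertions that $\mathrm{p}$ carries distinct $P$-orbits of $\mathcal{O}_f$ to distinct $P$-orbits of $\mathfrak p^*$, that $\mathrm{p}(\mathcal{O}_f)$ has a unique dense orbit, and that the reduced space over it is a point, are formal consequences of Proposition \ref{obr}; only the properness of $\mathrm{p}$ over $P\cdot(g_{I^o}\cdot f)$ requires a genuine argument.

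First I would dispose of the formal part. By Proposition \ref{obr}, for each admissible datum $(K,\{R_j\}_{j\in K},\{x_{ji}\})$ the functional $\mathrm{p}(g_I\cdot f)$ is $P$-conjugate to the explicit normal form $\mathrm{pr}'(\eta)$ with $\eta=\mathrm{diag}(A_1,\dots,A_m,J_d)$; by the classification of $P_n(k)$-coadjoint orbits (Theorem \ref{obs}) distinct data give distinct $\eta$, hence distinct $P$-orbits in $\mathfrak p^*$. Since the $g_I\cdot f$, $I\in I_a$, exhaust $P\backslash\mathcal{O}_f$, the map $\mathrm{p}$ is injective on $P\backslash\mathcal{O}_f$. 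As $v_{I^o}\cdot L$ is the unique dense $L$-orbit in $\mathbb R^n-\{0\}$, the orbit $P\cdot(g_{I^o}\cdot f)$ is the unique dense $P$-orbit in $\mathcal{O}_f$, so its image $\Omega:=\mathrm{p}(P\cdot(g_{I^o}\cdot f))$ is the unique dense orbit in $\mathrm{p}(\mathcal{O}_f)$; by injectivity no other $P$-orbit of $\mathcal{O}_f$ maps into $\Omega$, whence $\mathrm{p}^{-1}(\Omega)=P\cdot(g_{I^o}\cdot f)$ and the reduced space over $\Omega$ is a single point.

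For properness I would reduce to the already-established complex case. Via $G_n(\mathbb R)\subset G_n(\mathbb C)$, $P_n(\mathbb R)\subset P_n(\mathbb C)$ and the $\mathbb C$-linear extensions $\mathfrak g_n(\mathbb R)^*\hookrightarrow\mathfrak g_n(\mathbb C)^*$, $\mathfrak p_n(\mathbb R)^*\hookrightarrow\mathfrak p_n(\mathbb C)^*$ (closed real-linear embeddings commuting with the projections), $\mathrm{p}_{\mathbb R}$ is the restriction of $\mathrm{p}_{\mathbb C}$ to $\mathcal{O}_f^{\mathbb R}$. Since $\xi$ is a real matrix and two real matrices that are $\mathrm{GL}_n(\mathbb C)$-conjugate are already $\mathrm{GL}_n(\mathbb R)$-conjugate (rational canonical form is insensitive to base change), $\mathcal{O}_f^{\mathbb R}=\mathcal{O}_f^{\mathbb C}\cap\mathfrak g_n(\mathbb R)^*$ is closed in $\mathcal{O}_f^{\mathbb C}$. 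Comparing the normal forms of Propositions \ref{obr} and \ref{obc} through $J_k(a,b)\sim_{\mathbb C}\mathrm{diag}(J_k(a+ib),J_k(a-ib))$ shows that $g_{I^o}\cdot f$ lies in the unique dense $P_n(\mathbb C)$-orbit $\Omega_{\mathbb C}$ of $\mathrm{p}_{\mathbb C}(\mathcal{O}_f^{\mathbb C})$, so $P_n(\mathbb R)\cdot(g_{I^o}\cdot f)\subset\Omega_{\mathbb C}$. Hence for a compact $C\subset P_n(\mathbb R)\cdot(g_{I^o}\cdot f)$, Theorem \ref{GLnC} makes $\mathrm{p}_{\mathbb C}^{-1}(C)$ compact in $\mathcal{O}_f^{\mathbb C}$, and $\mathrm{p}_{\mathbb R}^{-1}(C)=\mathrm{p}_{\mathbb C}^{-1}(C)\cap\mathcal{O}_f^{\mathbb R}$ is a closed subset of a compact set, hence compact.

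Alternatively one can run the computation of Theorem \ref{GLnC} verbatim: identify $\mathrm{Stab}_P(g_{I^o}\cdot f)\cong\mathrm{Stab}_L(v_{I^o})$ and $\mathrm{Stab}_P(\mathrm{p}(g_{I^o}\cdot f))$ from the block descriptions of $L_j$ in Propositions \ref{Ljr} and \ref{Lj}, and check that $\mathrm{Stab}_P(\mathrm{p}(g_{I^o}\cdot f))/\mathrm{Stab}_P(g_{I^o}\cdot f)$ is trivial. The only real obstacle in either route is a routine but delicate bookkeeping: in the first route, the normal-form comparison placing $g_{I^o}\cdot f$ inside $\Omega_{\mathbb C}$; in the second, the dimension count for the $H_{k,l}$-blocks attached to the non-real eigenvalues $a_j\pm ib_j$, where one uses that each block $J_k(a_j,b_j)$ accounts over $\mathbb C$ for the conjugate pair, so the real dimension counts coincide with the complex ones and the same cancellation occurs.
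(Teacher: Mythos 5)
Your proposal is correct, and the formal assertions (orbit-to-orbit injectivity, uniqueness of the dense image orbit, singleton reduced space) are handled exactly as in the paper, namely as direct consequences of Proposition \ref{obr} together with the classification of Theorem \ref{obs}. Where you genuinely diverge is the properness claim: the paper's proof of Theorem \ref{GLnR} is a one-line appeal to Proposition \ref{obr} and ``the similar result of the $G_n(\mathbb{C})$ case'', i.e.\ to repeating the stabilizer computation of Theorem \ref{GLnC} with the real blocks of Proposition \ref{Ljr} and checking that $\mathrm{Stab}_P(\mathrm{p}(g_{I^o}\cdot f))/\mathrm{Stab}_P(g_{I^o}\cdot f)$ is a single element --- this is precisely your alternative route, and your observation that each block $H_{k,l}$ attached to the pair $a_j\pm ib_j$ has the same real dimension as the corresponding complex block $L_{k,l}$ is exactly why that count goes through verbatim. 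Your primary route instead deduces real properness from the complex theorem: it rests on (i) the fact that real matrices conjugate over $\mathbb{C}$ are conjugate over $\mathbb{R}$, so that $\mathcal{O}_f^{\mathbb{R}}$ is the intersection of $\mathcal{O}_f^{\mathbb{C}}$ with the (closed) real form and hence closed in $\mathcal{O}_f^{\mathbb{C}}$, and (ii) the normal-form comparison between Propositions \ref{obr} and \ref{obc} via $J_k(a,b)\sim_{\mathbb{C}}\mathrm{diag}(J_k(a+ib),J_k(a-ib))$, which places the real image orbit inside $\Omega_{\mathbb{C}}$. This buys you freedom from the real dimension bookkeeping at the price of that matching, and it is consonant with how the paper itself proves Proposition \ref{obr} (regarding $\mathfrak{p}_n(\mathbb{R})$ as a real form of $\mathfrak{p}_n(\mathbb{C})$). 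Two small slips should be fixed: the compact set $C$ must be taken in the image orbit $\Omega_{\mathbb{R}}=P_n(\mathbb{R})\cdot\mathrm{p}(g_{I^o}\cdot f)\subset\mathfrak{p}_n(\mathbb{R})^*$, not in the orbit $P_n(\mathbb{R})\cdot(g_{I^o}\cdot f)\subset\mathcal{O}_f$, and likewise it is $\mathrm{p}(g_{I^o}\cdot f)$, not $g_{I^o}\cdot f$, that lies in $\Omega_{\mathbb{C}}$; with these read as intended, $\mathrm{p}_{\mathbb{R}}^{-1}(C)=\mathrm{p}_{\mathbb{C}}^{-1}(C)\cap\mathcal{O}_f^{\mathbb{R}}$ is a closed subset of a compact set and the argument closes.
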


\begin{proof}
It follows from Proposition \ref{obr} and the similar result of the $G_n(\mathbb{C})$ case (see Theorem \ref{GLnC}).
\end{proof}

\section{Results of Kirillov's conjecture}\label{sec-4}

Let $k$ be the field $\mathbb{R}$ or $\mathbb{C}$, let $P_n(k)$ be the mirabolic subgroup of $\mathrm{GL}_n(k)$. The Kirillov's conjecture states that the irreducible unitary representations of $\mathrm{GL}_n(k)$ remains irreducible upon restriction to $P_n(k)$ and was proved by Sahi (\cite{S}) for tempered representations of $\mathrm{GL}_n(\mathbb{R})$ or $\mathrm{GL}_n(\mathbb{C})$, and
Sahi-Stein (\cite{SS}) for Speh representations of $\mathrm{GL}_n(\mathbb{R})$, and Baruch
(\cite{Bar}) in archimedean fields general. Later Aizenbud-Gourevitch-Sahi (\cite{SS}) calculated the adduced representations of the Speh complementary series.

Sahi's and Sahi-Stein's proofs are based on Mackey's theory of the unitary representations of semi-direct products and Vogan's result about the classification of the irreducible unitary representations of $\mathrm{GL}_n(k)$, and their key method is constructing the intertwining operator by Fourier transform. Baruch proved by studying the $P_n(k)$-invariant distribution. Aizenbud-Gourevitch-Sahi's used annihilator variety and degenerate Whittaker models to get the adduced representations of the Speh complementary series. Here we summarize some results of Kirillov's conjecture.

Firstly, we show how to get the unitary dual of $P_n(k)$, denoted by $\widehat{P_n(k)}$. The following two facts are easy to see.
\begin{itemize}
\item[(i)]  $P_n(k)\cong \mathrm{GL}_{n-1}(k)\ltimes k^{n-1}$.
\item[(ii)] $\mathrm{GL}_{n-1}(k) $ has two orbits in $(k^{n-1})^*$: $\{0\}$ and $(k^{n-1})^*-\{0\}$. Let $\xi\in (k^{n-1})^*-\{0\}$ be defined by $\xi(x_1,\cdots,x_{n-1})=x_{n-1}$. Then $\mathrm{Stab}_{\mathrm{GL}_{n-1}(k)}(\xi)\cong P_{n-1}(k)$.
\end{itemize}
Based on Mackey's theory, we have the followings.

Every irreducible unitary representation of $P_n(k)$ is obtained in one of the following two ways:
\begin{itemize}
\item[(i)] by trivially extending an irreducible unitary representation of $\mathrm{GL}_{n-1}(k)$,
\item[(ii)] by extending an irreducible unitary representation of $P_{n-1}(k)$ to $P_{n-1}(k)\ltimes k^{n-1}$ by the character $\xi$ and then unitarily inducing to $P_n(k)$.
\end{itemize}

We use $E$ and $I$ to denote for functors from the above constructions
(i) and (ii) respectively. Then,
\[\widehat{P_n(k)}=E(\widehat{\mathrm{GL}_{n-1}(k)})\bigsqcup I(\widehat{P_{n-1}(k)}).\]
Moreover, we have the following fact: each irreducible unitary representation
$\tau$ of $P_n(k)$ is of the form $\tau= I^{j-1}E\sigma$, where the integer $j\geq 1$ and $\sigma\in \widehat{\mathrm{GL}_{n-1}(k)}$ are uniquely determined by $\tau$.

Now we describe the unitary dual of $\mathrm{GL}_n(k)$, which is finally obtained by Vogan \cite{V1}. Let $\pi_1$ and $\pi_2$ be the representations of $\mathrm{GL}_{n_1}(k)$ and $\mathrm{GL}_{n_2}(k)$, $\pi_1\times \pi_2$ denotes the unitary parabolic induction from the representation $\pi_1\otimes \pi_2$ of the Levi subgroup $\mathrm{GL}_{n_1}(k)\times \mathrm{GL}_{n_2}(k)$ to $\mathrm{GL}_{n_1+n_2}(k)$. Then
every irreducible unitary representation of $\mathrm{GL}_n(\mathbb{R})$ is a $\times$-product of unitary characters,
Stein representations, Speh representations, and Speh complementary series representations.
And every irreducible unitary representation of $\mathrm{GL}_n(\mathbb{C})$ is a $\times$-product of
unitary characters and Stein representations.

For convenience, we give the descriptions of Speh representations, Stein representations and Speh complementary series representations.
They can be described as the (subrepresentations of) degenerated principle series.
\begin{itemize}
\item[(i)]Speh representations.
Let $Q$ be the subgroup of $\mathrm{GL}_{2n}(\mathbb{R})$ defined as
\[Q=\{\left(\begin{array}{cc}A&B\\0&D\end{array}\right)\ |\ A,D\in \mathrm{GL}_n(\mathbb{R}), B\in M_{n\times n}(\mathbb{R})\}.\]
For any $m\in \mathbb{Z}_+$, we define a character $\chi_m: Q\to \mathbb{C}^{\times}$  as
\[ \left(\begin{array}{cc}A&B\\0&D\end{array}\right)\mapsto |\det(A)|^{m/2}\mathrm{sgn}(\det(A))^{m+1}|\det(D)|^{-m/2}.\]
The Speh representation of $\mathrm{GL}_{2n}(\mathbb{R})$, denoted by $\delta(n,m)$, is the unique nonzero irreducible subrepresentation of $\mathrm{Ind}_Q^G(\chi_m)$, where the smooth vector in $\mathrm{Ind}_Q^G(\chi_m)$ is
\[\{f\in C^{\infty}(G)\ |\ f(qg)=\lambda(q)\chi_m(q)f(g), \forall q\in Q, g\in G\},\]
here $\lambda$ is the modular character defined as \[\left(\begin{array}{cc}A&B\\0&D\end{array}\right)\mapsto|\det(A)|^{n/2}|\det(D)|^{-n/2}.\]

Simply speaking, the Speh representation $\delta(n,m)$ is the unique nonzero irreducible subrepresentation of
\[(|\det|^{m/2}\cdot \mathrm{sgn}(\det)^{m+1})|_{\mathrm{GL}_n(\mathbb{R})}\times |\det|^{-m/2}|_{\mathrm{GL}_n(\mathbb{R})}.\]
\item[(ii)] Stein representation. For any  $s\in (0,1/2)$, we can define
the Stein representation of $\mathrm{GL}_{2n}(\mathbb{R})$, denoted by $\sigma(n,s)$ (or $\sigma(n,s)|_{\mathrm{GL}_{2n}(\mathbb{R})}$ when there is ambiguity), as the representation
\[|\det|^{s}|_{\mathrm{GL}_n(\mathbb{R})}\times |\det|^{-s}|_{\mathrm{GL}_n(\mathbb{R})}.\]
The Stein representation of $\mathrm{GL}_{2n}(\mathbb{C})$, denoted by $\sigma(n,s)$(or $\sigma(n,s)|_{\mathrm{GL}_{2n}(\mathbb{C})}$ when there is ambiguity), can be defined the same as that of $\mathrm{GL}_{2n}(\mathbb{R})$,
\[|\det|^{s}|_{\mathrm{GL}_n(\mathbb{C})}\times |\det|^{-s}|_{\mathrm{GL}_n(\mathbb{C})}.\]
\item[(iii)]Speh complementary series representation.
For any $m\in \mathbb{Z}_+$, and $s\in (0,1/2)$, we can define the Speh complementary series representation of $\mathrm{GL}_{4n}(\mathbb{R})$, denoted by $\Delta(n,m,s)$, as the representation
\[ |\det|^{s}\delta(n,m) \times |\det|^{-s}\delta(n,m).\]
\end{itemize}

We now state some results on Kirillov's conjecture. Firstly, S.Sahi \cite{S} established Kirillov's conjecture of $\mathrm{GL}_n(k)$ for the $\times$-product of unitary characters and Stein representations.

Sahi \cite{S} defined that
\begin{itemize}
\item[(i)]
a unitary representation of $P_n(k)$, denoted by $\tau$, is homogeneous of depth $j$ if $\tau=I^{j-1}E\sigma$ for some unitary representation $\sigma$ of $\mathrm{GL}_{n-1}(k)$,
\item[(ii)]
a unitary representation of $\mathrm{GL}_n(k)$, denoted by $\rho$, is adducible of depth $j$ if $\rho|_{P_n(k)}$ is homogeneous of depth $j$, and  if $\rho|_{P_n(k)}=I^{j-1}E\sigma$, we shall write $\sigma=A\rho$ and call it the adduced representation of $\rho$.
\end{itemize}

Then Sahi obtained the following key fact based on Mackey's theory and the partial Fourier transform, and got the result in that paper.
\begin{Theorem}[\cite{S}, Theorem 2.1] If $\rho$ and $\sigma$ are adducible representation of $\mathrm{GL}_{r}(k)$ and $\mathrm{GL}_{s}(k)$ of depths $l$ and $m$, then $\rho\times \sigma$ is adducible of depth $l+m$. Moreover, $A(\rho\times\sigma)=(A\rho) \times (A\sigma)$.
\end{Theorem}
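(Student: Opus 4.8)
The plan is to prove both assertions together by induction on the total depth $l+m$, the engine being a Mackey-theoretic analysis of how restriction to a mirabolic subgroup interacts with parabolic induction from a two-block Levi subgroup. Write $G=\mathrm{GL}_{r+s}(k)$, $P=P_{r+s}(k)$, and let $Q\subset G$ be the standard parabolic with Levi $\mathrm{GL}_r(k)\times\mathrm{GL}_s(k)$, so that $\rho\times\sigma=\mathrm{Ind}_Q^G(\rho\otimes\sigma)$ is normalized parabolic induction. Restricting to $P$, Mackey's theory expresses $(\rho\times\sigma)|_P$ in terms of the double coset space $P\backslash G/Q$, so the first task is to describe that space and the attached subquotients.

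Using the identification $P\backslash G\cong k^{r+s}\setminus\{0\}$ from Section~\ref{sec-3} (where $P$ is realized as the stabilizer of the row vector $(0,\dots,0,1)$), one checks that the right action of $Q$ on $k^{r+s}\setminus\{0\}$ has exactly two orbits: the closed one, consisting of the nonzero vectors supported on the last $s$ coordinates, and the open one, consisting of the vectors whose projection to the first $r$ coordinates is nonzero. Hence $\#(P\backslash G/Q)=2$, and $(\rho\times\sigma)|_P$ has a two-step filtration with one subquotient attached to each orbit. Computing the stabilizer in $P$ of a representative of each orbit identifies the subquotients functorially: the open orbit yields a subquotient of the form $I(\,\cdot\,)$ whose inducing datum is built from $\rho|_{P_r(k)}$ and $\sigma$ --- this is exactly the step carried out in \cite{S} by the partial Fourier transform in the ``mirabolic direction'', which furnishes the intertwiner realizing the open-cell module as an $I$-induced representation --- while the closed orbit yields a subquotient built, via $E$ and $\times$, from $\rho$ and $\sigma|_{P_s(k)}$. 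One also verifies, in the unitary category, that this filtration splits as a Hilbert-space direct sum.

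With these building blocks in hand I would run the induction on $l+m$. The base case is $l=m=1$: then $\rho|_{P_r(k)}=E(A\rho)$ and $\sigma|_{P_s(k)}=E(A\sigma)$ with $A\rho\in\widehat{\mathrm{GL}_{r-1}(k)}$, $A\sigma\in\widehat{\mathrm{GL}_{s-1}(k)}$, and the two-term decomposition degenerates --- the closed cell supplies the inducing datum $E((A\rho)\times(A\sigma))$ and the open cell contributes exactly one $I$ --- so $(\rho\times\sigma)|_{P_{r+s}(k)}=I\,E\big((A\rho)\times(A\sigma)\big)$, which is homogeneous of depth $2$ with adduced representation $(A\rho)\times(A\sigma)$. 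For the inductive step with $l+m\ge 3$, say $l\ge 2$: since $\rho|_{P_r(k)}=I\big(I^{l-2}E(A\rho)\big)$ and $I^{l-2}E(A\rho)$ is a homogeneous representation of $P_{r-1}(k)$ of depth $l-1$, one applies the open-cell building block to pull the outermost $I$ outside the $\times$-product. This reduces the computation to a $\times$-product involving a depth-$(l-1)$ datum, to which the induction hypothesis applies --- this is where it is convenient to run the induction on the slightly more general statement in which one factor is allowed to be merely a homogeneous $P$-representation rather than the restriction of a $\mathrm{GL}$-representation --- and it returns $I^{(l-1)+m-1}E\big((A\rho)\times(A\sigma)\big)$; prepending the pulled-out $I$ gives $I^{l+m-1}E\big((A\rho)\times(A\sigma)\big)$. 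Reading off the number of $I$'s gives depth $l+m$, and reading off the inducing $\mathrm{GL}$-datum gives $A(\rho\times\sigma)=(A\rho)\times(A\sigma)$.

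The hard part is not the double-coset bookkeeping --- there are only two cells --- but the analysis needed to upgrade the Mackey picture to a statement about \emph{unitary} representations: showing the two-term filtration splits as a genuine Hilbert-space direct sum so that homogeneity (not merely its semisimplified image) is preserved; realizing the open-cell summand as an honest $I$-induced unitary representation through the partial Fourier transform; and tracking all the $|\det|^{1/2}$-type modular normalizations so that the identity $A(\rho\times\sigma)=(A\rho)\times(A\sigma)$ holds on the nose rather than up to a twist. These analytic points --- in effect \cite{S}'s construction of the intertwining operator by Fourier transform --- carry the real weight; the representation-theoretic skeleton above is then a routine induction.
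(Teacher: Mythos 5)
A preliminary remark: the paper does not prove this statement at all --- it quotes it from Sahi \cite{S}, saying only that the proof rests on Mackey theory and the partial Fourier transform --- so your outline can only be measured against that description and against correctness. Your general strategy (double cosets $P\backslash G/Q$, induction on total depth, a Fourier-transform intertwiner for the open cell) is the right family of ideas, but there is a genuine error in the Mackey step. For \emph{unitary} induction, Mackey's subgroup theorem decomposes $(\rho\times\sigma)|_P$ as a direct integral over $P\backslash G/Q$ with respect to the natural measure class on $Q\backslash G$; the closed orbit (vectors supported in the last $s$ coordinates) is lower-dimensional, hence a null set, and contributes nothing. The unitary restriction is therefore exactly the open-cell induced representation --- there is no second summand. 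Your claim that the two-term filtration ``splits as a Hilbert-space direct sum'' is not a hard analytic point to be checked; it is false, and if it were true it would refute the theorem itself. Already for $r=s=1$ the closed cell would contribute a unitary character of $P_2(k)$ as a direct summand of $(\chi_1\times\chi_2)|_{P_2}$, contradicting both homogeneity of depth $2$ and the (known) irreducibility of that restriction: the restriction is the single infinite-dimensional representation $IE(\chi_1\chi_2|_{\mathrm{GL}_0})$. The two-cell filtration is a feature of the smooth category (Bernstein--Zelevinsky style), where it is generally non-split; in the unitary category the closed cell simply disappears.

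This error propagates into your base case, whose bookkeeping (``the closed cell supplies the inducing datum $E((A\rho)\times(A\sigma))$, the open cell contributes exactly one $I$'') is not a correct mechanism and is even inconsistent with your own identification of the closed cell as the piece built from $\rho$ and $\sigma|_{P_s(k)}$. In a correct argument everything, including the terminal datum $E\bigl((A\rho)\times(A\sigma)\bigr)$, must be extracted by iterating the open-cell analysis alone: one uses $\rho|_{P_r(k)}=I^{l-1}E(A\rho)$ to pull out $I$'s, and the crucial step --- converting the open-cell module attached to ``$E(A\rho)$ together with $\sigma$'' into an $I$-induced module attached to data involving $A\rho$ and $\sigma|_{P_s(k)}$, with the modular normalizations coming out on the nose --- is precisely where Sahi's partial Fourier transform intertwiner carries the load. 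Your proposal points at that operator but leaves this pivot, which is the actual content of \cite{S}, Theorem 2.1, as a citation, while the part you do spell out (the splitting and the base-case attribution of terms to cells) is the part that is wrong. Remove the splitting claim, run the recursion purely on the open cell, and supply the Fourier-transform equivalence, and your induction scheme does become the standard argument.
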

We also have the following result.
\begin{Theorem}[\cite{S}, Lemma 3.1]  Let $\pi$ be a unitary character of $\mathrm{GL}_n(k)$, then $\pi$ is adducible of depth $1$ and $A\pi=\pi|_{\mathrm{GL}_{n-1}(k)}$, where $\mathrm{GL}_{n-1}(k)$ is imbedded on the top left corner of $\mathrm{GL}_n(k)$.
\end{Theorem}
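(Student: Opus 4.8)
The plan is to exploit that a unitary character of $\mathrm{GL}_n(k)$ factors through the determinant, together with the semidirect product structure $P_n(k) = L_n(k) \ltimes N_n(k)$ from Section \ref{sec-2}, where $L_n(k) \cong \mathrm{GL}_{n-1}(k)$ is the top-left-corner Levi and $N_n(k) \cong k^{n-1}$ is the unipotent radical. The case $n = 1$ is vacuous, so I would assume $n \geq 2$. Everything reduces to unwinding the definitions of the functors $E$ and $I$ and of adducibility and depth; there is no genuine obstacle.

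First I would write $\pi = \chi \circ \det$ for a unitary character $\chi$ of $k^{\times}$: any one-dimensional representation is trivial on the derived subgroup $\mathrm{SL}_n(k)$, and $\det$ identifies $\mathrm{GL}_n(k)/\mathrm{SL}_n(k)$ with $k^{\times}$. For $g = \left(\begin{array}{cc} A & \alpha \\ 0 & 1\end{array}\right) \in P_n(k)$ one has $\det g = \det A$, hence $\pi(g) = \chi(\det A)$; in particular $\pi$ is trivial on $N_n(k)$, whose elements have determinant $1$, so $\pi|_{P_n(k)}$ descends to a character of $P_n(k)/N_n(k) \cong L_n(k) \cong \mathrm{GL}_{n-1}(k)$.

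Next I would match this with the functor $E$. Let $\sigma$ be the character $A \mapsto \chi(\det A)$ of $\mathrm{GL}_{n-1}(k)$, which is irreducible since it is one-dimensional. By definition $E\sigma$ is the pullback of $\sigma$ along the projection $P_n(k) \twoheadrightarrow L_n(k) \cong \mathrm{GL}_{n-1}(k)$, so the previous paragraph says precisely that $\pi|_{P_n(k)} = E\sigma = I^{0}E\sigma$, i.e. $\pi|_{P_n(k)}$ is homogeneous of depth $1$. Thus $\pi$ is adducible of depth $1$ with $A\pi = \sigma$. Finally, under the top-left-corner embedding $\mathrm{GL}_{n-1}(k) \hookrightarrow \mathrm{GL}_n(k)$, $A \mapsto \left(\begin{array}{cc} A & 0 \\ 0 & 1\end{array}\right)$ --- which is exactly $L_n(k)$ --- the determinant of the image is $\det A$, so $\pi|_{\mathrm{GL}_{n-1}(k)}(A) = \chi(\det A) = \sigma(A)$, giving $A\pi = \pi|_{\mathrm{GL}_{n-1}(k)}$. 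The only point deserving a remark is that ``depth $1$'' is unambiguous: this is the uniqueness assertion recalled before the definition of adducibility, and concretely it suffices to note that $\pi|_{P_n(k)}$ is one-dimensional whereas any $I^{j-1}E\sigma$ with $j \geq 2$ is induced from the proper closed subgroup $P_{n-1}(k)\ltimes k^{n-1}$, which has infinite index in $P_n(k)$, and is therefore infinite-dimensional.
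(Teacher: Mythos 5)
Your proof is correct. Note that the paper offers no argument of its own here: the statement is simply quoted from Sahi \cite{S}, Lemma 3.1, and your verification is exactly the expected one --- a unitary character factors through $\det$, hence is trivial on $N_n(k)$ and restricts on $P_n(k)$ to the pullback $E\sigma$ of the character $\sigma(A)=\chi(\det A)$ of $L_n(k)\cong \mathrm{GL}_{n-1}(k)$, which coincides with $\pi|_{\mathrm{GL}_{n-1}(k)}$ under the top-left-corner embedding, while your observation that any $I^{j-1}E\sigma$ with $j\geq 2$ is infinite-dimensional (being induced from a subgroup with non-discrete quotient) correctly pins down the depth as $1$.
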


Latter, Sahi \cite{S1}, Sahi-Stein \cite{SS} obtained the adduced representations of Stein representations and Speh representations. The key method is constructing the intertwining operator by partial Fourier transform.

\begin{Theorem}[\cite{S1}, 2.4]
Let $\sigma(n+1,s)$ be the Stein representation of $\mathrm{GL}_{2n+2}(\mathbb{R})$, $s\in (0,1/2)$, then $A\sigma(n+1,s)=\sigma(n,s)$.
\end{Theorem}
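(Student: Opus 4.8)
The plan is to realize $\sigma(n+1,s)$ as a unitarily induced representation, restrict it to $P=P_{2n+2}(\mathbb{R})$ by the Mackey machine, isolate the piece of the restriction supported on the open dense $P$-orbit, and identify that piece with $IE\sigma(n,s)$ by means of a partial Fourier transform; this exhibits $\sigma(n+1,s)$ as adducible of depth $2$ with $A\sigma(n+1,s)=\sigma(n,s)$. For the model, let $Q\subset G=\mathrm{GL}_{2n+2}(\mathbb{R})$ be the standard parabolic with Levi $\mathrm{GL}_{n+1}(\mathbb{R})\times\mathrm{GL}_{n+1}(\mathbb{R})$ and unipotent radical $U$, so that by definition $\sigma(n+1,s)=\mathrm{Ind}_Q^G(\chi_s)$ (normalized induction) with $\chi_s\left(\begin{smallmatrix}A&B\\0&D\end{smallmatrix}\right)=|\det A|^{s}|\det D|^{-s}$, realized on $L^2$-sections of the associated line bundle over $G/Q$; for $s\in(0,1/2)$ this is irreducible and unitary, so it suffices to understand this $L^2$-space as a $P$-module.

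Next I would restrict to $P$. The real Grassmannian $G/Q$ is a union of finitely many $P$-orbits with a single open dense one $\mathcal{O}=Pg_0Q$, and the complementary orbits, having measure zero, do not contribute to the $L^2$-model; hence $\sigma(n+1,s)|_P\cong\mathrm{Ind}_R^P(\psi)$ where $R=P\cap g_0Qg_0^{-1}$ and $\psi$ is $\chi_s$ twisted by $g_0$ and corrected by the Radon--Nikodym factor comparing the $G$- and $P$-quasi-invariant measures on $\mathcal{O}$. Choosing $g_0$ explicitly, one finds (up to conjugation in $P$) that $R=(\mathrm{GL}_n(\mathbb{R})\times\mathrm{GL}_n(\mathbb{R}))\ltimes V$ for an explicit unipotent group $V$, embedded in $P$ compatibly with the chain $P_{2n+2}\supset P_{2n+1}\ltimes k^{2n+1}\supset P_{2n}\ltimes k^{2n}\supset\cdots$, and that $\psi$ restricts to $|\det|^{\pm s}$-type characters on the two $\mathrm{GL}_n$-factors.

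On the other side, $\sigma(n,s)=\mathrm{Ind}_{Q'}^{\mathrm{GL}_{2n}(\mathbb{R})}(|\det|^{s}\otimes|\det|^{-s})$ with $Q'$ the $(n,n)$-parabolic, so $E\sigma(n,s)$ is its inflation to $P_{2n+1}(\mathbb{R})=\mathrm{GL}_{2n}(\mathbb{R})\ltimes k^{2n}$ (trivial on $k^{2n}$), and $IE\sigma(n,s)$ is the induction to $P_{2n+2}(\mathbb{R})$ of the extension of $E\sigma(n,s)$ by the character $\xi$ to $P_{2n+1}(\mathbb{R})\ltimes k^{2n+1}$. Induction in stages rewrites $IE\sigma(n,s)$ as $\mathrm{Ind}_S^P(\mu)$ for an explicit $S\subset P$ built from $Q'$ and the relevant unipotent radicals, with $\mu$ carrying $|\det|^{\pm s}$ on the two $\mathrm{GL}_n$-blocks and the additive character $\xi$ on the last abelian unipotent block. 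After a conjugation aligning $R$ with $S$, the two sets of inducing data differ only on one abelian unipotent block: a $|\det|$-type character on the $R$-side versus the additive character $\xi$ on the $S$-side. The final step is to define $\mathcal{F}$ to be the partial Fourier transform in the coordinates of that block and to check that $\mathcal{F}$ is an isometry between the two $L^2$-section spaces (Plancherel) which intertwines the $P$-actions: the translation action of the unipotent radical becomes multiplication by $\xi$, while the $\mathrm{GL}_n\times\mathrm{GL}_n$-action is matched precisely because $s$ and $-s$ are paired against contragredient factors and the Jacobian of the linear change of variables absorbs the residual power of $|\det|$. This yields $\sigma(n+1,s)|_P\cong\mathrm{Ind}_R^P(\psi)\cong\mathrm{Ind}_S^P(\mu)\cong IE\sigma(n,s)$, hence $A\sigma(n+1,s)=\sigma(n,s)$.

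I expect the main obstacle to be this last step: making the partial Fourier transform simultaneously unitary and $P$-equivariant forces a careful normalization of every modular and Jacobian factor carried over from the preceding steps, so that exactly the twists $|\det|^{\pm s}$ and the character $\xi$ appear and nothing extra. A secondary but genuine technicality is the measure-theoretic assertion that only the open $P$-orbit on $G/Q$ contributes to the restricted $L^2$-space, and the explicit determination of the representative $g_0$ and of the stabilizer $R$ with its embedding in the mirabolic chain.
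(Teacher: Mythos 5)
This theorem is not proved in the paper at all: it is quoted from Sahi \cite{S1}, and the paper only records that the method there is Mackey theory for semidirect products together with an intertwining operator built from a partial Fourier transform, so your proposal must be measured against Sahi's argument, whose general strategy you have correctly guessed. The genuine gap is at your very first step: for real $s\in(0,1/2)$ the Stein representation is a \emph{complementary series}, and its invariant inner product is not the $L^2$ inner product on sections over $G/Q$. The normalized induced representation $\mathrm{Ind}_Q^G(|\det|^{s}\otimes|\det|^{-s})$ is unitary for the naive $L^2$ norm only when $s$ is purely imaginary; for $s\in(0,1/2)$ the invariant Hermitian form is $\langle f,A(s)g\rangle$ with $A(s)$ the nonlocal standard intertwining operator (convolution against a power of $|\det|$ in the noncompact picture), and the positivity of this form is precisely Stein's theorem. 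Because of this, the two steps carrying all the weight in your outline fail as stated: you cannot discard the lower-dimensional $P$-orbits of $G/Q$ on the grounds that they are null sets ``in the $L^2$-model'', and you cannot invoke Plancherel to conclude that the partial Fourier transform is an isometry, since the relevant norm is not an $L^2$ norm. What must be done instead --- and what constitutes the actual content of \cite{S1}, and of \cite{SS} in the Speh case --- is to transport the form $\langle\cdot,A(s)\cdot\rangle$ through the restriction to $P$ and through the partial Fourier transform, check that on the Fourier side it becomes a positive form of multiplication type (positivity of the transformed kernel holding precisely in the complementary range $0<s<1/2$), and identify the completion of the resulting space, as a $P$-module, with the Hilbert space of $IE\sigma(n,s)$. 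Your plan works essentially verbatim for the unitarily induced representations treated in \cite{S}, but it bypasses exactly the analytic difficulty that distinguishes the complementary series.

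Two structural slips are secondary but worth correcting. First, the stabilizer in $P$ of a point of the open $P$-orbit on $G/Q\cong\mathrm{Gr}(n+1,\mathbb{R}^{2n+2})$ is, up to conjugacy, $(\mathrm{GL}_{n+1}(\mathbb{R})\times P_{n+1}(\mathbb{R}))\ltimes M_{n\times(n+1)}(\mathbb{R})$, whose reductive part is $\mathrm{GL}_{n+1}\times\mathrm{GL}_{n}$, not $\mathrm{GL}_n\times\mathrm{GL}_n$ as you assert. Second, this group $R$ is not conjugate in $P$ (indeed not isomorphic) to the subgroup $S$ obtained from $IE\sigma(n,s)$ by induction in stages, so ``a conjugation aligning $R$ with $S$'' cannot exist; the passage from the $R$-picture to the $S$-picture is exactly what the partial Fourier transform along the abelian normal subgroup $k^{2n+1}\subset P$ accomplishes in the Mackey analysis, and the modular and Jacobian bookkeeping you defer must be carried out across this genuinely different pair of inducing data rather than across a conjugation.
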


\begin{Theorem}[\cite{SS}, Theorem 3]
Let $\delta(n+1,m)$ be the $m$-th Speh representation of $\mathrm{GL}_{2n+2}(\mathbb{R})$, $m\in \mathbb{Z}_+$, then $A\delta(n+1,m)=\delta(n,m)$.
\end{Theorem}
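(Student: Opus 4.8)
The plan is to bootstrap from Sahi's multiplicativity theorem \cite{S}, Theorem 2.1, exploiting that the partial-Fourier-transform isomorphism occurring in its proof is compatible with standard intertwining operators. Write $G=\mathrm{GL}_{2n+2}(\mathbb{R})$, $P=P_{2n+2}(\mathbb{R})$, let $Q\subset G$ be the parabolic subgroup with Levi factor $\mathrm{GL}_{n+1}(\mathbb{R})\times\mathrm{GL}_{n+1}(\mathbb{R})$, and set
\[ I_1=(|\det|^{m/2}\mathrm{sgn}(\det)^{m+1})|_{\mathrm{GL}_{n+1}(\mathbb{R})},\qquad I_2=|\det|^{-m/2}|_{\mathrm{GL}_{n+1}(\mathbb{R})}, \]
so that, as in the excerpt, $\delta(n+1,m)$ is the unique irreducible submodule of $I_1\times I_2$; equivalently $\delta(n+1,m)=\mathrm{Im}(T)$ for a nonzero intertwining operator $T\colon I_2\times I_1\to I_1\times I_2$. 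Each $I_j$ is one-dimensional, hence adducible of depth $1$: this is the argument of \cite{S}, Lemma 3.1, which only uses that $\det$ of a mirabolic element equals $\det$ of its $\mathrm{GL}_n$-block and is insensitive to non-unitarity, and it gives $AI_1=\tilde I_1:=(|\det|^{m/2}\mathrm{sgn}(\det)^{m+1})|_{\mathrm{GL}_n(\mathbb{R})}$ and $AI_2=\tilde I_2:=|\det|^{-m/2}|_{\mathrm{GL}_n(\mathbb{R})}$.

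First I would apply \cite{S}, Theorem 2.1, to $I_1$ and $I_2$: the representation $I_1\times I_2$ of $G$ is adducible of depth $2$, and the proof furnishes an explicit isomorphism of $P$-modules
\[ \Phi\colon (I_1\times I_2)|_P\ \xrightarrow{\ \sim\ }\ IE(\tilde I_1\times\tilde I_2), \]
together with its analogue $\Phi'\colon (I_2\times I_1)|_P\xrightarrow{\ \sim\ } IE(\tilde I_2\times\tilde I_1)$. Concretely $\Phi$ restricts an induced function to the open $P$--$Q$ double coset in $G$ and then applies a partial Fourier transform in the matrix coordinates on the unipotent radical opposite to $Q$; in particular it is given by an explicit integral formula. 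Here one should observe that $\tilde I_1\times\tilde I_2$ is precisely the degenerate principal series of $\mathrm{GL}_{2n}(\mathbb{R})$ whose unique irreducible submodule is $\delta(n,m)$, i.e. $\delta(n,m)=\mathrm{Im}(\tilde T)$ for the corresponding intertwining operator $\tilde T\colon \tilde I_2\times\tilde I_1\to\tilde I_1\times\tilde I_2$.

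The key step is the compatibility
\[ \Phi\circ(T|_P)=(IE\,\tilde T)\circ\Phi'\qquad\text{as maps }(I_2\times I_1)|_P\to IE(\tilde I_1\times\tilde I_2). \]
Granting this, exactness of $I$ (unitary induction from a closed subgroup of $P$) and $E$ (trivial extension from $\mathrm{GL}_{2n}(\mathbb{R})$) yields
\[ \Phi\big(\delta(n+1,m)|_P\big)=\Phi\big(\mathrm{Im}(T|_P)\big)=\mathrm{Im}\big((IE\,\tilde T)\circ\Phi'\big)=IE\big(\mathrm{Im}\,\tilde T\big)=IE\,\delta(n,m), \]
so $\delta(n+1,m)|_P\cong IE\,\delta(n,m)$; since $\delta(n,m)$ is an irreducible representation of $\mathrm{GL}_{2n}(\mathbb{R})$, this says exactly that $\delta(n+1,m)$ is adducible of depth $2$ with $A\delta(n+1,m)=\delta(n,m)$, and it also recovers Kirillov's irreducibility for $\delta(n+1,m)$ inductively once it is known for $\delta(n,m)$.

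I expect the compatibility identity $\Phi\circ T|_P=(IE\tilde T)\circ\Phi'$ to be the main obstacle. All three operators $T$, $\Phi$, $\Phi'$ are given by explicit integrals --- $T$ over the unipotent radical opposite to $Q$, and $\Phi,\Phi'$ by restriction to the open $P$--$Q$ double coset followed by a partial Fourier transform --- so after writing each side as an iterated integral and using Fubini together with the $\mathrm{GL}$-covariance of the kernels, the identity ought to come down to the elementary fact that the Euclidean Fourier transform intertwines the relevant pair of integral operators. The delicate parts are keeping track of the coordinate decompositions on the open double coset and, above all, controlling convergence; I would handle the latter by treating $m/2$ as a complex parameter $s$, proving the identity on the range of $s$ where all integrals converge absolutely, and then continuing meromorphically in $s$ --- the point $s=m/2$ being regular for every operator involved because there the degenerate principal series contains $\delta(n,m)$, resp.\ $\delta(n+1,m)$, as a genuine submodule. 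A secondary, routine-but-error-prone matter is fixing the normalizing modular characters so that $AI_j$ is exactly $\tilde I_j$ with the shift dictated by the definition of the Speh representation.
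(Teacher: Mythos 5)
This statement is not proved in the paper at all: it is quoted verbatim from Sahi--Stein \cite{SS}, whose actual argument is a concrete piece of analysis in matrix space --- the Speh representation is realized on an explicit Hilbert space built from a partial Fourier transform, and the adduced representation is read off from that unitary model. Your plan is in the same general family (degenerate principal series, intertwining operators, partial Fourier transform), but as written it has a genuine gap, and it sits precisely at the step you yourself flag as ``the main obstacle''.

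The problem is not only that the compatibility $\Phi\circ(T|_P)=(IE\,\tilde T)\circ\Phi'$ is asserted rather than proved (that identity, together with convergence and regularity of the unnormalized intertwining operator at the reducibility point $s=m/2$, is essentially the entire analytic content of \cite{SS}); it is that the framework you invoke does not apply to the objects you apply it to. Adducibility, the functors $E$ and $I$, and \cite{S}, Theorem 2.1 are statements about \emph{unitary} representations, proved via Mackey theory for the $L^2$-induced picture, where the complement of the open $P$--$Q$ double coset in $Q\backslash G$ has measure zero and restriction to the open cell is an isomorphism of unitary $P$-modules. Your $I_1\times I_2$, $I_2\times I_1$ and $\tilde I_1\times\tilde I_2$ are non-unitary inductions: in the $L^2$ picture the $P$-action on them is not unitary, images of $T|_P$ need not be closed, and ``exactness of $I$ and $E$'' is not even a meaningful assertion there; in the smooth picture, where images and exactness do make sense, the map $\Phi$ you describe (restriction to the open double coset followed by a partial Fourier transform) is no longer an isomorphism of $P$-modules, because the closed $P$-orbit on $Q\backslash G$ contributes nontrivially and one only gets a filtration, so you must additionally show that the closed-orbit constituent does not interfere with $\mathrm{Im}(T|_P)$. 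Making the argument rigorous therefore requires either redoing Sahi's Theorem 2.1 in a smooth (non-unitary) category --- which is in substance the derivative formalism later developed by Aizenbud--Gourevitch--Sahi \cite{AGS} --- or working, as Sahi--Stein do, with an explicit unitary realization of $\delta(n+1,m)$ itself rather than with the ambient non-unitary principal series. As it stands, your proposal is a plausible research plan, not a proof of the theorem.
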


After Baruch \cite{Bar}  proved the Kirillov's conjecture in archimedean fields, Aizenbud-Gourevitch-Sahi \cite{SS} calculated the adduced representations of the Speh complementary series.

\begin{Theorem}[\cite{AGS}, Theorem 4.2.4]
Let $\Delta(n+1,m,s)$ be the Speh complementary  series representation of $\mathrm{GL}_{4n+4}(\mathbb{R})$, $m\in \mathbb{Z}_+$ and $s\in (0,1/2)$, then $A\Delta(n+1,m,s)=\Delta(n,m,s)$.
\end{Theorem}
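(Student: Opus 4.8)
The plan is to reduce the statement to Sahi's product theorem [\cite{S}, Theorem 2.1] together with the Sahi--Stein computation $A\delta(n+1,m)=\delta(n,m)$ [\cite{SS}, Theorem 3], after first analyzing how adducibility behaves under a twist by the (generally non-unitary) character $|\det|^{s}$. Recall that $\Delta(n+1,m,s)=|\det|^{s}\delta(n+1,m)\times|\det|^{-s}\delta(n+1,m)$; the two tensor factors are representations of $\mathrm{GL}_{2n+2}(\mathbb{R})$ that are not unitary, so a preliminary point is to interpret their restrictions to $P_{2n+2}(\mathbb{R})$, and the functors $E$, $I$, $A$, in the category of Casselman--Wallach (smooth, moderate-growth) representations rather than in the Hilbert-space category.

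First I would establish a twisting lemma for $E$ and $I$. Since the determinant of a mirabolic element equals the determinant of its $\mathrm{GL}_{N-1}(k)$-block, the restriction of $|\det|^{s}$ to $P_N(k)=\mathrm{GL}_{N-1}(k)\ltimes k^{N-1}$ is the character that is $|\det|^{s}$ on the Levi and trivial on the unipotent radical. Using the projection formula for $I$ and the fact that $E$ extends trivially across unipotent radicals, one checks that for any representation $\sigma$ of $\mathrm{GL}_{N-j}(k)$,
\[
|\det|^{s}\big|_{P_N(k)}\otimes I^{j-1}E\sigma \;=\; I^{j-1}E\!\left(|\det|^{s}\big|_{\mathrm{GL}_{N-j}(k)}\otimes\sigma\right),
\]
each passage of the character past one copy of $I$ lowering the determinant by one $\mathrm{GL}$-level and the last passage past $E$ depositing it on the Levi $\mathrm{GL}_{N-j}(k)$. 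Combined with $\delta(n+1,m)\big|_{P_{2n+2}(\mathbb{R})}=IE\delta(n,m)$ (which is exactly the statement that $\delta(n+1,m)$ is adducible of depth $2$), this gives
\[
\big(|\det|^{\pm s}\delta(n+1,m)\big)\big|_{P_{2n+2}(\mathbb{R})}\;=\;IE\big(|\det|^{\pm s}\delta(n,m)\big),
\]
so that $|\det|^{\pm s}\delta(n+1,m)$ is ``adducible of depth $2$'' with adduced representation $|\det|^{\pm s}\delta(n,m)$, in the Casselman--Wallach sense.

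Then I would invoke (the proof of) Sahi's product theorem: for adducible $\rho_1,\rho_2$ of depths $j_1,j_2$ the product $\rho_1\times\rho_2$ is adducible of depth $j_1+j_2$ with $A(\rho_1\times\rho_2)=A\rho_1\times A\rho_2$. Applying this with $\rho_1=|\det|^{s}\delta(n+1,m)$ and $\rho_2=|\det|^{-s}\delta(n+1,m)$ (each of depth $2$) yields depth $2+2=4$ and
\[
A\Delta(n+1,m,s)\;=\;|\det|^{s}\delta(n,m)\times|\det|^{-s}\delta(n,m)\;=\;\Delta(n,m,s),
\]
as claimed. The same scheme re-proves $A\sigma(n+1,s)=\sigma(n,s)$, using that each $|\det|^{\pm s}$ is adducible of depth $1$ with adduced representation $|\det|^{\pm s}$ on one fewer variable.

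The real obstacle --- and the reason Aizenbud--Gourevitch--Sahi need degenerate Whittaker models and annihilator varieties rather than this bookkeeping --- is that Sahi's Mackey-theoretic framework for adducibility and his product theorem are designed for \emph{unitary} representations, whereas the factors $|\det|^{\pm s}\delta(n+1,m)$ are not unitary, so $\rho\big|_{P_{2n+2}(\mathbb{R})}$ is not a Hilbert-space direct sum and the arguments must be redone in the smooth category. Concretely, one must establish there the exactness of the degenerate-Whittaker (``derivative'') functors, the Leibniz-type formula expressing the derivatives of a parabolically induced representation in terms of those of the inducing data, and the non-vanishing of the top derivative. All of this is controlled by the associated (annihilator) variety: the Speh representation and its twists are small (their annihilator varieties are closures of proper, far-from-regular nilpotent orbits), the annihilator variety of $\Delta(n+1,m,s)$ is the corresponding orbit closure in $\mathfrak{gl}_{4n+4}(\mathbb{R})^*$, and this smallness is precisely what forces the top degenerate Whittaker model of $\Delta(n+1,m,s)$ to equal $\Delta(n,m,s)$ with no contribution from the cross terms in the Leibniz formula. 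Carrying out these archimedean estimates is the heart of the proof; over a $p$-adic field the whole argument collapses to the Bernstein--Zelevinsky derivative calculus and is routine.
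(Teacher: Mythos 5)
This statement is not proved in the paper at all: it is imported verbatim from Aizenbud--Gourevitch--Sahi \cite{AGS} (Theorem 4.2.4), so the only meaningful comparison is with the proof in that reference, which goes through derivative (degenerate Whittaker) functors in the smooth category together with annihilator varieties. Your closing paragraph correctly identifies that route, but you do not carry it out, and the elementary reduction offered in the first half cannot stand on its own. Sahi's product theorem (\cite{S}, Theorem 2.1) is a statement about \emph{unitary} representations: both the definition of ``adducible'' through $\rho|_{P_N(k)}=I^{j-1}E\sigma$ and its proof via Mackey theory live in the Hilbert-space category, and the factors $|\det|^{\pm s}\delta(n+1,m)$ of $\Delta(n+1,m,s)$ are not unitary, so neither the theorem nor the notions it uses apply to them as stated. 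You acknowledge this, and the repair you describe --- redefining $E$, $I$, $A$ and ``depth'' for Casselman--Wallach representations, proving exactness of the derivative functors, a Leibniz-type filtration for derivatives of parabolic induction, vanishing of the cross terms and non-vanishing of the top derivative, all controlled by annihilator varieties --- is precisely the content of \cite{AGS}. None of these steps is proved in your text; the sentence that ``carrying out these archimedean estimates is the heart of the proof'' concedes that the heart is missing, so what you have is an accurate road map of the known proof rather than a proof.

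Concretely, the gap is twofold. First, the ``twisting lemma'' and the identity $\bigl(|\det|^{\pm s}\delta(n+1,m)\bigr)\big|_{P_{2n+2}(\mathbb{R})}=IE\bigl(|\det|^{\pm s}\delta(n,m)\bigr)$ are asserted in a category where $I$, $E$ and depth have not been defined and where the unitary input $A\delta(n+1,m)=\delta(n,m)$ of \cite{SS} has not been transported; note that even in the simpler Stein case, where your scheme would ``re-prove'' $A\sigma(n+1,s)=\sigma(n,s)$, Sahi needed a genuine intertwining-operator construction via partial Fourier transforms \cite{S1}, which already shows the bookkeeping alone is not a proof. Second, the formula $A(\rho_1\times\rho_2)=A\rho_1\times A\rho_2$ for non-unitary smooth factors requires the Leibniz filtration to degenerate, i.e.\ all lower-depth contributions must vanish and the top piece must survive; this is exactly where the annihilator-variety smallness arguments of \cite{AGS} do their work, and your proposal names this requirement without supplying any argument for it.
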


\begin{Example}
In the case of $\mathrm{GL}_n(\mathbb{C})$, let $a_i\in \mathbb{R}, s_i\in \mathbb{Z}_+$, $1\leq i\leq m$, $\sum_{i=1}^ms_i=n$, let $\pi=(\det)^{ia_1}|_{\mathrm{GL}_{s_1}(\mathbb{C})}\times \cdots \times (\det)^{ia_{m}}|_{\mathrm{GL}_{s_m}(\mathbb{C})}$ be the irreducible unitary representation of $\mathrm{GL}_n(\mathbb{C})$, then
\[\pi|_{P_n}=I^{m-1}E((\det)^{ia_1}|_{\mathrm{GL}_{s_1-1}(\mathbb{C})}\times \cdots \times (\det)^{ia_{m}}|_{\mathrm{GL}_{s_m-1}(\mathbb{C})}).\]
\end{Example}

\section{Orbit method}\label{sec-5}

The orbit method tries to establish a close connection existed between irreducible unitary representations of a Lie group
and its orbits in the coadjoint representation, and to provide a clear geometric picture of irreducible unitary representations. In 1950's , before the orbit method developed, Mackey and others had obtained some wonderful results on the irreducible unitary representations of Lie groups, showing how to use induced representation to obtain irreducible unitary representations, see Mackey \cite{M}. In 1960's, Kirillov found out a way to think about the set of all irreducible unitary representations of a simply connected nilpotent Lie group, establishing a bijection between the coadjoint orbits and the irreducible unitary representations (see Kirillov \cite{K}), and brought up the orbit method. Later, Kostant developed a quantization theory to obtain irreducible unitary representations, and generalized the results of Kirillov to solvable Lie groups (see Auslander-Kostant \cite{AK}).  In 1970's-1980's,  Duflo got all the irreducible unitary representations of (almost) algebraic groups assuming the unitary dual of reductive Lie groups, see Duflo \cite{D}. The orbit method (or the quantization problem) of reductive groups is not completely solved until now. Roughly speaking, the quantization of coadjoint orbits can be reduced to the case of nilpotent coadjoint orbits (defined below), and such irreducible unitary representations attached to nilpotent coadjoint orbits, called unipotent representations, are not easy to understood in general, see Vogan \cite{V}. We will summarize some results about the orbit method of reductive Lie groups and algebraic groups in the following two parts. The method of constructing representations is using induced representation (includes parabolic induction and cohomological induction).

\subsection{Representations attached to coadjoint orbits of $\mathrm{GL}_n(k)$}

\subsubsection{Jordan decomposition}

To begin with, we use Jordan decomposition to describe the coadjoint orbits of reductive groups (see Vogan \cite{V}, Lecture 2).

Let $G$ be a reductive Lie group and let $\mathfrak{g}$ be its Lie algebra. Let $\theta$ be a Cartan involution of $\mathfrak{g}$ and let
\[G=K\exp \mathfrak{s}\]
be the Cartan decomposition, where $K=G^{\theta}$ and $\mathfrak{s}=-1$ eigenspace of $\theta$ on $\mathfrak{g}$.

Let $B$ be a nondegenerate $G$-invariant and $\theta$-invariant bilinear form on $\mathfrak{g}$ such that the quadratic form
\[\mathfrak{g}\ni X\mapsto -B(X,\theta X)\]
on $\mathfrak{g}$ is positive definite.
Let $\mathfrak{g}^*$ be the dual of $\mathfrak{g}$. Then we have a $G$-isomorphism
\[\phi: \mathfrak{g} \to \mathfrak{g}^*, X\mapsto (Y\mapsto B(X,Y), \forall Y\in\mathfrak{g}), \forall X\in \mathfrak{g}.\]
For any $\lambda\in \mathfrak{g}^*$, we call it semisimple (resp. nilpotent, hyperbolic, elliptic)
if $\phi^{-1}(\lambda)$ is a semisimple (resp. nilpotent, hyperbolic, elliptic) element in $\mathfrak{g}$.
By the Jordan decomposition, we can decompose any $\lambda\in \mathfrak{g}^*$ uniquely as
\[\lambda=\lambda_h+\lambda_e+\lambda_n,\]
where $\lambda_h$ is hyperbolic, $\lambda_e$ is elliptic, $\lambda_n$ is nilpotent and $\phi^{-1}(\lambda_h)$,
$\phi^{-1}(\lambda_e)$, $\phi^{-1}(\lambda_n)$ commute with each other.

Set $X_h=\phi^{-1}(\lambda_h)$, $X_e=\phi^{-1}(\lambda_e)$ and $X_n=\phi^{-1}(\lambda_n)$.
Let $G(X)$ (resp. $G(\lambda)$) be the stabilizer of $X$ (resp. of $\lambda$), and $\mathfrak{g}(X)$ (resp. $\mathfrak{g}(\lambda)$) be the Lie algebra of $G(X)$ (resp. of $G(\lambda)$). Then we have the followings.
\begin{Proposition}[Vogan \cite{V1}, Proposition 2.10, 2.11 and 2.12]
\begin{itemize}
 \item[(i)]  Any hyperbolic element in $\mathfrak{g}$ is conjugated into $\mathfrak{s}$. If $X\in \mathfrak{s}$ is hyperbolic, then $G(X)$ is reductive group with Cartan involution $\theta|_{G(X)}$.
 \item[(ii)] There is a bijection between the $G$-orbits of elements in $\mathfrak{g}$, which hyperbolic part are $G$-conjugated to $X_h$, and the $G(X_h)$-orbits of elements in $\mathfrak{g}(X_h)$, which hyperbolic part is zero.
 \item[(iii)] Any elliptic element in $\mathfrak{g}$  is conjugated into $\mathfrak{k}$. If $X\in \mathfrak{k}$ is elliptic, then $G(X)$ is a reductive group with Cartan involution $\theta|_{G(X)}$.
 \item [(iv)] There is a bijection between the $G$-orbits of elements in $\mathfrak{g}$, which semisimple part are $G$-conjugated to $X_h+X_e$, and the $G(X_h+X_e)$-orbits of elements in $\mathfrak{g}(X_h+X_e)$, which semisimple part is zero.
\end{itemize}
\end{Proposition}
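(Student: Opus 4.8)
The statement collects standard facts about the Jordan decomposition on a real reductive Lie algebra, so the plan is to prove the four items in the order stated, using (i) in the proof of (ii) and both (i) and (iii) in the proof of (iv). Throughout I identify $\mathfrak{g}$ with $\mathfrak{g}^*$ via $\phi$ and argue with elements of $\mathfrak{g}$, calling $X$ hyperbolic (resp. elliptic) when $\mathrm{ad}X$ is diagonalizable with real (resp. purely imaginary) eigenvalues. The fact I lean on repeatedly is that for $X\in\mathfrak{s}$ the operator $\mathrm{ad}X$ is self-adjoint for the inner product $\langle Y,Z\rangle=-B(Y,\theta Z)$, while for $X\in\mathfrak{k}$ it is skew-adjoint; in particular such $X$ are automatically hyperbolic, resp. elliptic, and an element that is hyperbolic (resp. elliptic) and happens to lie in $\mathfrak{s}$ (resp. $\mathfrak{k}$) equals its own hyperbolic (resp. elliptic) part.

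For (i) and (iii) I would first conjugate $X$ into $\mathfrak{s}$, resp. $\mathfrak{k}$. For a hyperbolic $X$ this is classical: one minimizes $g\mapsto\|\mathrm{Ad}(g)X\|^2$, whose critical points $Y$ satisfy $[\,Y,\theta Y\,]=0$ and hence, by hyperbolicity, $\theta Y=-Y$, i.e. $Y\in\mathfrak{s}$; alternatively one conjugates $X$ into a Cartan subspace $\mathfrak{a}\subset\mathfrak{s}$, all of which are $G$-conjugate. For an elliptic $X$ the same minimization, or the remark that $\overline{\{\exp tX\}}$ is a compact torus lying in a maximal compact subgroup and all such are $G$-conjugate to $K$, puts $X$ in $\mathfrak{k}$. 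For the second halves, if $X\in\mathfrak{s}$ and $g\in G(X)$ then $\mathrm{Ad}(\theta g)(\theta X)=\theta(\mathrm{Ad}(g)X)=\theta X$, and $\theta X=-X$ forces $\theta g\in G(X)$; likewise for $X\in\mathfrak{k}$ using $\theta X=X$. Thus $G(X)$ is $\theta$-stable, and a $\theta$-stable reductive subgroup is reductive with Cartan involution the restriction of $\theta$, since restricting $G=K\exp\mathfrak{s}$ yields $G(X)=(K\cap G(X))\exp(\mathfrak{s}\cap\mathfrak{g}(X))$ with $K\cap G(X)$ maximal compact.

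For (ii) I would first isolate the compatibility statement that does the real work: if $X_h$ is hyperbolic then $\mathfrak{g}(X_h)$ is reductive, the three Jordan components of any $Y\in\mathfrak{g}(X_h)$ lie in $\mathfrak{g}(X_h)$ and coincide with those computed inside $\mathfrak{g}(X_h)$, and the hyperbolic part of $X_h+Y$ is $X_h$ plus the hyperbolic part of $Y$ (using that the sum of two commuting hyperbolic elements is hyperbolic). Granting this, the map sends a $G$-orbit whose elements have hyperbolic part $G$-conjugate to $X_h$ to the $G(X_h)$-orbit of $X'$, where $X=X_h+X'$ is a representative with hyperbolic part exactly $X_h$ and $X'\in\mathfrak{g}(X_h)$ has zero hyperbolic part; it is well defined and injective because any $g$ with $\mathrm{Ad}(g)(X_h+X_1')=X_h+X_2'$ must send the hyperbolic part $X_h$ to $X_h$, hence $g\in G(X_h)$, and it is surjective exactly by the displayed ``hyperbolic part of $X_h+Y$'' formula. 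Item (iv) is the same argument one level deeper: use (i) to put $X_h$ in $\mathfrak{s}$, then apply (iii) inside the $\theta$-stable reductive group $G(X_h)$ to move $X_e$ into $\mathfrak{k}\cap\mathfrak{g}(X_h)$ without disturbing $X_h$, note that $G(X_h+X_e)=G(X_h)\cap G(X_e)$ is again $\theta$-stable reductive, and run the bijection with ``semisimple part'' and $\mathfrak{g}(X_h+X_e)$ in place of ``hyperbolic part'' and $\mathfrak{g}(X_h)$.

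The genuinely substantial inputs, and hence the main obstacle, are the structure theory behind (i) and (iii) — that hyperbolic and elliptic elements conjugate into $\mathfrak{s}$ and $\mathfrak{k}$, and that a $\theta$-stable reductive subgroup inherits a Cartan involution — together with the claim that centralizers such as $\mathfrak{g}(X_h)$ and $\mathfrak{g}(X_h+X_e)$ are reductive and that Jordan decomposition is compatible with restriction to them. Once these are in hand the four orbit statements reduce to formal manipulations with uniqueness of the Jordan decomposition and the fact that $\mathrm{Ad}$ preserves each Jordan component. In an actual write-up I would simply invoke Vogan \cite{V1}, Propositions 2.10--2.12, for this well-known package rather than redeveloping the structure theory of real reductive groups.
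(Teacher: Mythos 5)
The paper gives no proof of this Proposition at all---it is quoted directly from Vogan \cite{V1}, Propositions 2.10--2.12, exactly as you do at the end of your write-up---so your treatment matches the paper's. Your sketch of the underlying arguments (conjugating hyperbolic and elliptic elements into $\mathfrak{s}$ and $\mathfrak{k}$, $\theta$-stability of the centralizers, and the compatibility of the Jordan decomposition with passage to $\mathfrak{g}(X_h)$ and $\mathfrak{g}(X_h+X_e)$ for the two bijections) is the standard route and is correct as an outline.
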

Therefore we can assume $X_h\in \mathfrak{s}$ and $X_e\in \mathfrak{k}$.

An irreducible unitary representation which is attached to a coadjoint orbit has a kind of ``Jordan decomposition". To attach an irreducible unitary representation to the orbit of $\lambda$,
we firstly attach an irreducible representation $\pi_n$ of $G(\lambda_h+\lambda_e)$ to the $G(\lambda_h+\lambda_e)$-orbit of $\lambda_n|_{\mathfrak{g}(\lambda_h+\lambda_e)^*}$, which is called as nilpotent step; then using $\lambda_e$ and applying cohomological induction to $\pi_n$, we get an irreducible unitary representation $\pi_h$ of $G(\lambda_h)$, which is called as elliptic step; finally, using $\lambda_h$ and applying parabolic induction to $\pi_h$, we get an irreducible unitary representation of $G$, which is called as hyperbolic step.

\subsubsection{Hyperbolic step}

The easiest step is hyperbolic step, which uses parabolic induction, see Vogan \cite{V}, Lecture 2.

Set $L=G(\lambda_h)$ with its Lie algebra $\mathfrak{l}$. Set
\[\mathfrak{g}_r=\{Y\in\mathfrak{g}|[X_h,Y]=rY\}, r\in \mathbb{R}\]
and $\mathfrak{u}=\oplus_{r>0}\mathfrak{g}_r$. Set $U=\exp \mathfrak{u}$ and $Q=LU$. Define unitary character $\chi(\lambda_h)$ of $L$ as follows,
\[ \chi(\lambda_h)(k\cdot \exp(Z))=\exp(i\lambda_h(Z)), k\in L\cap K, Z\in \mathfrak{l}\cap \mathfrak{s}.\]
Suppose that $\pi_L$ is any unitary representation of $G(\lambda_h)$ attached to the $G(\lambda_h)$-coadjoint orbit of $(\lambda_e+\lambda_n)|_{\mathfrak{l}}$, then the unitary representation of $G$ (this unitary representation may be reducible)
\[\pi_G=\mathrm{Ind}_Q^G(\pi_L\otimes \chi(\lambda_h))\]
is attached to the coadjoint orbit of $\lambda$.

\subsubsection{Elliptic step}

The elliptic step is more complicated, which uses cohomological induction, see Vogan \cite{V2}. Set $\lambda_s=\lambda_h+\lambda_e$ be semisimple part of $\lambda$, and set $X_s=X_h+X_e$.

Let $\mathfrak{g}_{\mathbb{C}}$ be the complexification of $\mathfrak{g}$, and let $\mathfrak{q}$ be the parabolic subalgebra having $\mathfrak{g}_{\mathbb{C}}(\lambda_s)$ as a Levi factor with nilpotent radical $\mathfrak{u}$ such that the eigenvalues of $X_s$ acting on $\mathfrak{u}$ are in
\[\{z\in \mathbb{C}\ |\ \mathrm{Re}(z)>0, \ \text{or}\ \mathrm{Re}(z)=0, \mathrm{Im}(z)>0\}.\]
Let $e^{2\rho(\mathfrak{u})}$ be the character of the adjoint action of $G(\lambda_s)$ on the top exterior power of $\mathfrak{u}$.

Assume that $\tau$ is any irreducible representation of $G(\lambda_s)$ such that \[d\tau=i\lambda_s+\rho(\mathfrak{u}).\]
Then there is
attached to $\lambda$  a unitary representation $\pi_{\lambda}$ such that the underlying $(\mathfrak{g}_{\mathbb{C}},K)$-module $\Pi(\lambda)$ is defined as follows,
\begin{equation}\label{pi_e}
\Pi(\lambda)=(\Gamma^{\mathfrak{g}_{\mathbb{C}},K}_{\mathfrak{g}_{\mathbb{C}},G(\lambda_s)\cap K})^d
(\mathrm{pro}_{\mathfrak{q},G(\lambda_s)\cap K}^{\mathfrak{g}_{\mathbb{C}},G(\lambda_s)\cap K}(\tau\otimes\pi_n)),
\end{equation}
where $\Gamma$ is the Zuckerman functor, $\mathrm{pro}$ is a kind of $\mathrm{Hom}$ functor, $d$ is the dimension of $\mathfrak{u}\cap \mathfrak{k}$, $\pi_n$ is the unipotent representation of $G(\lambda_s)$ which is attached to $\lambda_n|_{(\mathfrak{g}(\lambda_s))^*}$.

To define the Zuckerman functor (see Knapp-Vogan \cite{KV}, Introduction), we need to define the Hecke algebra. Assume that $\mathfrak{h}$ is a complex reductive Lie algebra of Lie group $H$, $M$ is a compact subgroup of $H$, then the Hecke algebra $R(\mathfrak{h},M)$ can be defined as the algebra of left $M$-finite distributions on $H$ with support in $M$, with convolution as multiplication.

Let's fix some notations to define the Zuckerman functor and the functor $\mathrm{pro}$. Let $G$ be a linear connected reductive Lie group and $G_{\mathbb{C}}$ be the complexification of $G$. Let $K$ be the maximal compact subgroup of $G$.  Let $\mathfrak{g}$ denote the Lie algebra of $G$ and $\mathfrak{g}_{\mathbb{C}}$ be the complexification of $\mathfrak{g}$.

Assume that $T$ is a torus in $G$, and $L=Z_G(T)$ with Lie algebra $\mathfrak{l}$. Let $L_{\mathbb{C}}$ be the analytic subgroup of $G_{\mathbb{C}}$ with Lie subalgebra $\mathfrak{l}_{\mathbb{C}}$.
Let $Q$ be a parabolic subgroup in $G_{\mathbb{C}}$ containing $L_{\mathbb{C}}$ as Levi subgroup and let $\mathfrak{q}$ be the Lie algebra of $Q$.

Assume that $V$ is a $(\mathfrak{g}_{\mathbb{C}},L\cap K)$-module, $W$ is an $(\mathfrak{l}_{\mathbb{C}},L\cap K)$-module, hence is also a $(\mathfrak{q},L\cap K)$-module by trivial extension to the radical $\mathfrak{u}$ of $\mathfrak{q}$.

Define the functor $\mathrm{pro}$ from $(\mathfrak{q},L\cap K)$-module to $(\mathfrak{g}_{\mathbb{C}},L\cap K)$-module as
\[\mathrm{pro}_{\mathfrak{q},L\cap K}^{\mathfrak{g}_{\mathbb{C}},L\cap K}(W)=\mathrm{Hom}_{\mathfrak{q}}(U(\mathfrak{g}_{\mathbb{C}}),W)_{L\cap K}.\]
Here the subscript ${L\cap K}$ means taking all the $L\cap K$ finite vectors.

Define the functor from $(\mathfrak{g}_{\mathbb{C}},L\cap K)$-module to $(\mathfrak{g}_{\mathbb{C}},K)$-module as
\[\Gamma^{\mathfrak{g}_{\mathbb{C}},K}_{\mathfrak{g}_{\mathbb{C}},L\cap K}(V)=\mathrm{Hom}_{R(\mathfrak{g}_{\mathbb{C}},L\cap K)}(R(\mathfrak{g}_{\mathbb{C}},K),V)_K.\]

It is left exact and has $i$-th right derivative functors, denoted by $(\Gamma^{\mathfrak{g}_{\mathbb{C}},K}_{\mathfrak{g}_{\mathbb{C}},L\cap K})^i$.

Let $(^{\mathrm{u}}\mathcal{R}_{\mathfrak{q},L\cap K}^{\mathfrak{g}_{\mathbb{C}},K})^i(W)$ denote
\[(\Gamma^{\mathfrak{g}_{\mathbb{C}},K}_{\mathfrak{g}_{\mathbb{C}},K\cap L})^i(\mathrm{pro}_{\mathfrak{q},L\cap K}^{\mathfrak{g}_{\mathbb{C}},L\cap K}(W)),\]
then the equation (\ref{pi_e}) can be written as
\[(^{\mathrm{u}}\mathcal{R}^{\mathfrak{g}_{\mathbb{C}},K}_{\mathfrak{q},G(\lambda_s)\cap K})^d(\tau\otimes \pi_n).\]

\subsubsection{Nilpotent step}

We focus on the special unipotent representations which are defined by Arthur, Barbasch and Vogan (see Adams-Barbasch-Vogan \cite{ABV}, chapter 27).

Let's introduce notation about partition firstly. Let $n\in \mathbb{Z}_+$, let $r\in\mathbb{Z}_+$, $k_i,l_i\in \mathbb{Z}_+$ such that $\sum_{1\leq i\leq r}k_il_i=n$ and $k_i>k_j$ for any $1\leq i<j\leq r$. We use $\{k_1^{l_1}\cdots k_r^{l_r}\}$ to denote the partition of $n$, \[\{\underbrace{k_1,\cdots,k_1}_{l_1},\cdots,\underbrace{k_r,\cdots,k_r}_{l_r}\}.\]
Let $P$ denote this partition, we will use $J_P$ to denote $\mathrm{diag}(J_{k_1}^{l_1},\cdots,J_{k_r}^{l_r})$,
use $J_P(a)$ to denote $\mathrm{diag}(J_{k_1}^{l_1}(a),\cdots,J_{k_r}^{l_r}(a))$, use $J_P(a,b)$ to denote $\mathrm{diag}(J_{k_1}^{l_1}(a,b),\cdots,J_{k_r}^{l_r}(a,b))$.

For the case of $G=G_n(\mathbb{R})$,
let $f=\mathrm{pr}(\xi)$ with $\xi=J_P$.
The special unipotent representation attached to $\mathcal{O}_f$ is
\[\pi=\mathrm{sgn}(\det)^{w_1}|_{G_{t_1}(\mathbb{R})}\times \cdots\times \mathrm{sgn}(\det)^{w_p}|_{G_{t_p}(\mathbb{R})},\]
where $\{t_{1},\cdots,t_{p}\}$ is the dual partition of $\{k_1^{l_1}\cdots k_r^{l_r}\}$ and $w_i=0,1$ for $1\leq i\leq p$.

For the case of $G=G_n(\mathbb{C})$,
let $f=\mathrm{pr}(\xi)$ with $\xi=J_P$.
The special unipotent representation attached to $\mathcal{O}_f$ is
\[\pi=1|_{G_{t_1}(\mathbb{C})}\times \cdots\times 1|_{G_{t_p}(\mathbb{C})}\]
where $\{t_{1},\cdots,t_{p}\}$ is the dual partition of $\{k_1^{l_1}\cdots k_r^{l_r}\}$.

In general, we have the following correspondence between coadjoint orbits and irreducible unitary representations.

For the case of $G=G_n(\mathbb{R})$.

When $n=2m$, assume that $P=\{k_1^{l_1}\cdots k_r^{l_r}\}$ is a partition of $m$.
Let $f\in \mathfrak{g}^*$ and $f=\mathrm{pr}(\xi)$ with $\xi=J_P(0,b/2)$, $b\in \mathbb{Z}_+$.

Firstly, we use the nilpotent step and attach the orbit $f_n=\mathrm{pr}(\xi_n)$, $\xi_n=J_P(0,0)$,
with the irreducible unitary representation $\pi_n$ of $G(f_s)\cong G_m(\mathbb{C})$
\[ 1|_{G_{t_1}(\mathbb{C})}\times \cdots\times 1|_{G_{t_p}(\mathbb{C})},\]
where $\{t_{1},\cdots,t_p\}$  is the dual partition of $\{k_1^{l_1}\cdots k_r^{l_r}\}$.

By direct calculation, we have
\[\tau=(\frac{\det}{|\det|})^{a+m}|_{G_m(\mathbb{C})} \ \text{and}\ e^{\rho(\mathfrak{u})}=(\frac{\det}{|\det|})^{m}|_{G_m(\mathbb{C})}.\]

Using Vogan \cite{V1}, Theorem 17.6, we have
\[(^{\mathrm{u}}\mathcal{R}^{\mathfrak{g}_{\mathbb{C}},K}_{\mathfrak{q},G(f_s)\cap K})^d(\tau\otimes \pi_n)=\mathop{\times}\limits_{i=1}^{p} \delta(t_i,b).\]

So the irreducible unitary representation $\mathop{\times}\limits_{i=1}^p\delta(t_i,b)$ is attached to the coadjoint orbit of $f=\mathrm{pr}(\xi)$ with $\xi=J_P(0,b/2)$.

When $f=\mathrm{pr}(\xi)$ with
\[\xi=\mathrm{diag}(J_{P_1}(0,b_1/2),\cdots,J_{P_r}(0,b_r/2),J_P),\]
where $r\in \mathbb{Z}_+$, $b_i\in \mathbb{Z}_+$, for $1\leq i\leq r$, $b_1>b_2>\cdots>b_r$, $P_i$ is a partition of $n_i$ and $P$ is a partition of $n_0$ with $\sum_{i=1}^{r}2n_i+n_0=n$.

Let $\{t_{i1},\cdots,t_{is_i}\}$ be the dual partition of $P_i$ for $1\leq i\leq r$. Let $\{t_{1},\cdots,t_{s}\}$ be the dual partition of $P$.

Let
\[H=G_{2n_1}(\mathbb{R})\times\cdots\times G_{2n_p}(\mathbb{R})\times G_{n_0}(\mathbb{R}),\]
then $G(f_s)\subset H$. Let $\mathfrak{h}$ be the Lie algebra of $H$ and $\mathfrak{h}_{\mathbb{C}}$ be the complexification of $\mathfrak{h}$. Let $\mathfrak{q}_1=\mathfrak{h}_{\mathbb{C}}\cap \mathfrak{q}$, and let $\mathfrak{u}_1$ be the nilpotent radical of $\mathfrak{q}_1$. Let $Q$ be the parabolic subgroup containing $H$ and the invertible upper triangular matrices. Set $d_1=\dim(\mathfrak{u}_1\cap\mathfrak{k})$.

We use Vogan \cite{V1}, Theorem 17.6, and get that
\begin{align*}
 (^{\mathrm{u}}\mathcal{R}^{\mathfrak{g}_{\mathbb{C}},K}_{\mathfrak{q},G(f_s)\cap K})^{d}(\tau\otimes \pi_n)
 &= \mathrm{Ind}_Q^G ((^{\mathrm{u}}\mathcal{R}^{\mathfrak{h}_{\mathbb{C}},K\cap H}_{\mathfrak{q_1},G(f_s)\cap K})^{d_1}(\tau\otimes \pi_n))
\\& = (\mathop{\times}\limits_{j=1}^{p} \mathop{\times}\limits_{i=1}^{s_j} \delta(t_{ji},b_j))   \times ( \mathop{\times}\limits_{j=1}^{s} \mathrm{sgn}(\det)^{w_j}|_{G_{t_j}(\mathbb{R})}) ,\end{align*}
where $w_j=0,1$ for any $1\leq j\leq s$.

\subsubsection{Representations of $\mathrm{GL}_n(k)$ attached to some kind of coadjoint orbits}

In general, for the case of $G_n(\mathbb{R})$,
let $f=\mathrm{pr}(\xi)$ with $\xi=\mathrm{diag}(\xi_1,\cdots,\xi_m)$, $m\in \mathbb{Z}_+$, and
\[\xi_i=\mathrm{diag}(J_{P_{i1}}(a_i,b_{i1}/2),\cdots,J_{P_{ip_i}}(a_i,b_{ip_i}/2),J_{P_i}(a_i)), 1\leq i\leq m,\]
where $p_i\in \mathbb{Z}_+$, $a_i\in \mathbb{R}$ for $1\leq i\leq m$, $a_1>\cdots>a_m$, $b_{ij}\in \mathbb{Z}_+$, and $b_{i1}>\cdots >b_{ip_i}$, $P_{ij}$ is a partition of $n_{ij}$ for $1\leq i\leq m$ and $1\leq j\leq p_i$, $P_i$ is a partition of $n_i$ for $1\leq i\leq m$, $n_{ij},n_i$ satisfy
\[\sum_{1\leq i\leq m}\sum_{1\leq j\leq p_i}2n_{ij}+\sum_{1\leq i\leq m}n_i=n.\]

Assume that $\{t_{ij,1},\cdots,t_{ij,s_{ij}}\}$ is the dual partition of $P_{ij}$, and $\{t_{i,1},\cdots,t_{i,s_i}\}$ is the dual partition of $P_i$.

The representation attached to $\mathcal{O}_f$ is $\pi=\pi_1\times\cdots\times\pi_m$, where
\[\pi_{l}=
|\det|^{ia_l}\otimes((\mathop{\times}\limits_{j=1}^{p_l}\mathop{\times}\limits_{k=1}^{s_{lj}} \delta(t_{lj,k},b_{lj}))\times (\mathop{\times}\limits_{k=1}^{s_l}\mathrm{sgn}(\det)^{w_{l,k}}|_{G_{t_{l,k}}(\mathbb{R})})),\]
with $w_{l,k}=0,1$ for any $1\leq l\leq m$, $1\leq k\leq s_l$.

For the case of $G=G_n(\mathbb{C})$,
let $f=\mathrm{pr}(\xi)$ with
\[\xi=\mathrm{diag}(J_{P_1}(a_1),\cdots,J_{P_m}(a_m)),\]
where $a_i\in\mathbb{R}$ for any $1\leq i\leq m$, and $a_1>\cdots>a_m$, $P_i$ is a partition of $n_i$ for $1\leq i\leq m$, and $n_i$($1\leq i\leq m$) satisfy $\sum_{1\leq i\leq m}n_i=n$.

The representation attached to  $\mathcal{O}_f$ is
\[\pi=\mathop{\times}\limits_{j=1}^{m}\mathop{\times}\limits_{k=1}^{s_{j}}|\det|^{ia_j}|_{G_{p_{jk}}(\mathbb{C})},\]
where $\{p_{j1},\cdots,p_{js_j}\}$ is the dual partition of $P_j$ for $1\leq j\leq m$.

\subsection{Representations attached to coadjoint orbits of $P_n(k)$}

Given an (almost) algebraic group $G$, Duflo \cite{D} constructed all the irreducible unitary representations of $G$ assuming the unitary dual of some reductive groups, basing on the Mackey's theory. It suggests that
the problem of attaching irreducible unitary representations with coadjoint orbits  can be reduced to the problem in the case of reductive groups. We summarize some results of Duflo \cite{D} without proofs in the followings.

\subsubsection{Coadjoint orbits of algebraic groups}

Duflo showed how to get all the coadjoint orbits of $\mathfrak{g}^*$ assuming that one knew the coadjoint orbits of reductive groups.

Let $\mathfrak{g}$ be the Lie algebra of $G$, and let $\mathfrak{g}^*$ be the dual of $\mathfrak{g}$.
For any $x\in \mathfrak{g}^*$, let
$G(x)$ be the stabilizer of $x$ in $G$ and let $\mathfrak{g}(x)$ be the Lie algebra of $G(x)$, let $^uG(x)$ be the unipotent radical of
$G(x)$ and let $^u\mathfrak{g}(x)$ be the Lie algebra of $^uG(x)$.

On $\mathfrak{g}/\mathfrak{g}(x)$, there is a natural symplectic form, denoted by $\omega_x$,
\[\omega_x(\overline{\alpha},\overline{\beta})=x([\alpha,\beta]),\]
where $\alpha,\beta\in \mathfrak{g}$ and $\overline{\alpha},\overline{\beta}$ denote the image of $\alpha$ and $\beta$ in $\mathfrak{g}/\mathfrak{g}(x)$.

Firstly, we need to define the linear forms of unipotent type (see \cite{D}, I). If $G$ is unipotent, all of $\mathfrak{g}^*$ are of unipotent type, and if $G$ is
reductive, $0$ is the unique element of $\mathfrak{g}^*$ of unipotent type. In general, its definition depends on the following things.

\begin{Definition} Let $x\in \mathfrak{g}^*$ and $\mathfrak{b}$ be a subalgebra of $\mathfrak{g}$. We say that
\begin{itemize}
\item[(i)]$\mathfrak{b}$ is coisotropic relative to $x$ (or coisotropic) if the orthogonal of $\mathfrak{b}$ in $\mathfrak{g}$ with respect to $\omega_x$, denoted by $\mathfrak{b}^{\perp}$, is contained in $\mathfrak{b}$.
\item[(ii)] $\mathfrak{b}$ is a polarization of $x$ if $\mathfrak{b}^{\perp}=\mathfrak{b}$.
\item[(iii)] An coisotropic subalgebra $\mathfrak{b}$ is of strongly
unipotent type if $\mathfrak{b}$ is algebraic (i.e. there exists algebraic subgroup with Lie subalgebra $\mathfrak{b}$) and $\mathfrak{b}=\mathfrak{g}(x)+$$^u\mathfrak{b}$.
\end{itemize}
\end{Definition}

\begin{Definition}
An element $x$ in $\mathfrak{g}^*$ is said to be of unipotent type if it satisfies
\begin{itemize}
\item[(i)] There exists a reductive factor of $\mathfrak{g}(x)$ contained in $\ker x$.
\item[(ii)] There is a subalgebra of strongly unipotent type relative to $x$.
\end{itemize}
\end{Definition}

Let $x\in \mathfrak{g}^*$ be a form of unipotent type, then we can construct a subalgebra of strongly unipotent type relative to $x$ by induction on $\dim \mathfrak{g}$ canonically (see \cite{D}, I.20). Let $u=x|_{^u\mathfrak{g}}$ and let $\mathfrak{h}=\mathfrak{g}(u)$. If $\mathfrak{h}=\mathfrak{g}$, then set $\mathfrak{b}=\mathfrak{h}$. If $\mathfrak{h}\neq \mathfrak{g}$, then $y=x|_{\mathfrak{h}}$ is a form of unipotent type. Let $\mathfrak{l}\subset \mathfrak{h}$ be the canonical subalgebra of strongly unipotent type relative to $y$, set $\mathfrak{b}=$$^u\mathfrak{g}+\mathfrak{h}$. Then $\mathfrak{b}$ is the canonical subalgebra of strongly unipotent type relative to $x$.

Let $D$ be the set of pairs $(x,\lambda)$, where $x$ is a linear form of unipotent type on
$\mathfrak{g}$, and $\lambda $ is an element in $L(x)$, $L(x)$ is the set of
linear forms over $\mathfrak{g}(x)$ whose restriction to $^u\mathfrak{g}(x)$ is equal to $x|_{^u\mathfrak{g}(x)}$.

Let $\mathfrak{r}$ be a reductive factor of $\mathfrak{g}(x)$, then the restriction map establish a bijection of $L(x)$ and $\mathfrak{r}^*$, sending $y\in L(x)$ to $y|_{\mathfrak{r}}$.

Let $(x,\lambda)\in D$. Let $\mathfrak{b}$ be a subalgebra of $\mathfrak{g}$, which is of unipotent type relative to
$x$ (see \cite{D},I.8 for definition, from \cite{D}, I.9, I.22, we have that when $x$ is of unipotent type, a subalgebra is of unipotent type relative to $x$ if and only if it is strongly unipotent type relative to $x$). Let $f$ be an element of $\mathfrak{g}^*$ whose restriction to $^u\mathfrak{b}$ is equal to $x|_{^u\mathfrak{b}}$, and
whose restriction restriction to $\mathfrak{g}(x)$ is equal to $\lambda$. Such an element $f$ exists by the definition of $L(x)$.

Let $G$ acts on $D$ naturally, then we have the followings.
\begin{Proposition}[\cite{D}, II.5]\label{po}
\begin{itemize}
\item[(i)] The orbit $G\cdot f$ in $\mathfrak{g}^*$ does not depend on the choices of $\mathfrak{b}$ and $f$. We denote it by $O_{x,\lambda}$.
\item[(ii)] The map $(x,\lambda)\mapsto O_{x,\lambda}$  induces a bijection from $D/G$ to $\mathfrak{g}^*/G$.
\end{itemize}
\end{Proposition}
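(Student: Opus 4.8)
The plan is to prove (i) and (ii) together by induction on $\dim\mathfrak g$, organised around the unipotent radical in exactly the manner of the canonical subalgebra of strongly unipotent type constructed just above the statement. Fix a Levi decomposition $G=R\ltimes U$ with $U={}^uG$, and write $\mathfrak u={}^u\mathfrak g$. There are two base cases. If $U=G$ (so $G$ is unipotent), then by Kirillov's orbit theory every $f\in\mathfrak g^*$ is of unipotent type — polarisations exist, and the reductive-factor condition is vacuous since $\mathfrak g(f)$ is nilpotent — while $L(f)$ reduces to the single point $f|_{\mathfrak g(f)}$, so $(x,\lambda)\mapsto G\cdot x$ is the tautological bijection $D/G\to\mathfrak g^*/G$. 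If $U=1$ (so $G$ is reductive), then $u:=f|_{\mathfrak u}=0$ and $\mathfrak g(u)=\mathfrak g$, so the canonical construction forces $\mathfrak b=\mathfrak g$; requiring $\mathfrak b$ to be of strongly unipotent type relative to $x$ makes $x$ central, and condition (i) of unipotent type then forces $x=0$. Hence $D=\{0\}\times\mathfrak g^*$ and $(0,\lambda)\mapsto G\cdot\lambda$ is again a bijection.

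For the inductive step the engine is a Mackey-type reduction along $U$. Given $f\in\mathfrak g^*$, put $u=f|_{\mathfrak u}$ and let $H=G(u)$, $\mathfrak h=\mathfrak g(u)$; any element fixing $f$ fixes $u$, so $\mathfrak g(f)\subseteq\mathfrak h$. Since $\mathfrak u$ is an ideal, $\mathfrak g^*\to\mathfrak u^*$ is $G$-equivariant and a direct check shows that $G\cdot f$ meets the fibre over $u$ precisely in $H\cdot f$; thus $G$-orbits lying over $G\cdot u$ correspond bijectively to $H$-orbits on the affine space $A_u=\{g\in\mathfrak g^*:g|_{\mathfrak u}=u\}$. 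To analyse $A_u/H$ one invokes Kirillov's theory for $U$: choose a polarisation $\mathfrak m\subseteq\mathfrak u$ at $u$, which by a Vergne-type averaging may be taken stable under a maximal reductive subgroup of $H$, and use it to straighten the action of $U\cap H=\exp(\mathfrak u\cap\mathfrak h)$ on $A_u$; this identifies $A_u$, modulo that unipotent action, with $\mathfrak h^*$ in an $H$-equivariant way, so that classifying the relevant $H$-orbits on $A_u$ becomes the classification of $H$-orbits on $\mathfrak h^*$ restricting appropriately to ${}^u\mathfrak h$. When $\mathfrak h=\mathfrak g$ (that is, $u$ is $G$-fixed) the reduction terminates and one is in the case $\mathfrak b=\mathfrak g$ of the canonical construction; when $\mathfrak h\neq\mathfrak g$ one has $\dim\mathfrak h<\dim\mathfrak g$, the form $x|_{\mathfrak h}$ is again of unipotent type for $\mathfrak h$, and $\mathfrak g(x)=\mathfrak h(x|_{\mathfrak h})$ together with ${}^u\mathfrak g(x)={}^u(\mathfrak h(x|_{\mathfrak h}))$, so the induction hypothesis applies to $(\mathfrak h,x|_{\mathfrak h})$ with its canonical subalgebra $\mathfrak b_{\mathfrak h}$ and with $\mathfrak b={}^u\mathfrak g+\mathfrak b_{\mathfrak h}$. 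Propagating $(x,\lambda)$ through this reduction yields both the independence statement (i) — each intermediate choice of $\mathfrak m$ and of $f$ is absorbed into the conjugation freedom at its stage — and the injectivity of $(x,\lambda)\mapsto O_{x,\lambda}$ on $D/G$.

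Surjectivity is obtained by running the same reduction with no hypothesis on $f$: set $u=f|_{\mathfrak u}$, pass to $\mathfrak h=\mathfrak g(u)$ and $x|_{\mathfrak h}$, and iterate until the unipotent radical of the current algebra fixes the current form; this produces, after finitely many steps, a pair $(x,\lambda)$ with $G\cdot f=O_{x,\lambda}$. The point that must then be checked, and the step I expect to be the main obstacle, is that this canonical construction stays inside $D$: that the subalgebra $\mathfrak b$ assembled from the successive polarisations is algebraic, coisotropic relative to $x$, and satisfies $\mathfrak b=\mathfrak g(x)+{}^u\mathfrak b$, and that $\mathfrak g(x)$ admits a reductive factor contained in $\ker x$. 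This requires the compatibility of the Kirillov polarisations with the Levi decomposition at each stage, together with careful bookkeeping of the unipotent radicals of the successive centralisers — in particular identities such as ${}^u\mathfrak g(x)=\mathfrak g(x)\cap{}^u\mathfrak g$ in the situations at hand — and it is exactly here that one sees ``unipotent type'' to be precisely the normalisation making the pair $(x,\lambda)$, up to $G$-conjugacy, an invariant of the orbit $O_{x,\lambda}$. With surjectivity and injectivity in place, (ii) follows, and (i) has been obtained en route.
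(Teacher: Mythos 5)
First, a point of comparison: the paper contains no proof of this Proposition at all — it is quoted from Duflo \cite{D}, II.5, and the text states explicitly that Duflo's results are summarized without proofs. So your proposal can only be measured against Duflo's original argument. You have correctly identified its overall strategy (a Mackey-type reduction of coadjoint orbits along the unipotent radical, organised by the canonical subalgebra of strongly unipotent type), but what you have written is an outline of that strategy rather than a proof, and its load-bearing steps are either asserted or false as stated. The identification of $A_u$ ``modulo the unipotent action'' with (an affine subspace of) $\mathfrak h^*$, $H$-equivariantly, is precisely the technical core of Chapters I--II of \cite{D}; you dispose of it in one sentence via an unspecified polarisation of $u$ and ``Vergne-type averaging''. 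More importantly, you never say how the map $f\mapsto(x,\lambda)$ is actually defined: $x$ must be a linear form of unipotent type on all of $\mathfrak g$, not on the subalgebra reached at the bottom of your recursion, and assembling it from $f$, and showing the result is independent, up to $G$-conjugacy, of the polarisations chosen along the way, is exactly the content of (i) and of injectivity in (ii). You yourself flag the remaining verification (that the construction lands in $D$, i.e.\ that $x$ is of unipotent type and that $\mathfrak b$ is algebraic, coisotropic and equals $\mathfrak g(x)+{}^u\mathfrak b$) as the main obstacle and leave it open.

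Second, the bookkeeping identities on which your inductive step rests are not true. Take $\mathfrak g=\mathbb{R}H+\mathbb{R}X$ with $[H,X]=X$, so ${}^u\mathfrak g=\mathbb{R}X$, and $x=X^*$; this $x$ is of unipotent type ($\mathbb{R}X$ is an algebraic polarization and $\mathfrak g(x)=0$). Here $u=x|_{{}^u\mathfrak g}$, $\mathfrak h=\mathfrak g(u)=\mathbb{R}X$, and $\mathfrak h(x|_{\mathfrak h})=\mathbb{R}X\neq 0=\mathfrak g(x)$; likewise ${}^u\mathfrak g(x)=0$ while ${}^u\bigl(\mathfrak h(x|_{\mathfrak h})\bigr)=\mathbb{R}X$. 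So the identities $\mathfrak g(x)=\mathfrak h(x|_{\mathfrak h})$ and ${}^u\mathfrak g(x)={}^u(\mathfrak h(x|_{\mathfrak h}))$ fail even in the simplest non-reductive, non-nilpotent example; in the genuine reduction the stabilizers on the two sides differ by unipotent factors, which is exactly why Duflo's statement is an orbit correspondence and not a stabilizer-preserving one, and why the metaplectic/cocycle corrections appear later in his construction of representations. Similarly, the fact that $y=x|_{\mathfrak h}$ is again of unipotent type (quoted in the paper from \cite{D}, I.20) is itself something to be proved, not assumed. As it stands, the induction does not close; repairing it would amount to reproving the orbit-theoretic part of Chapters I and II of \cite{D}, which the sketch does not contain.
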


\subsubsection{Construction of unitary dual of algebraic groups}

Duflo showed how to construct the irreducible unitary representations of $G$ using the coadjoint orbits and the irreducible unitary representations of some reductive groups.

The construction of representations used Metaplectic groups.

Since $G(x)$ acts on $\mathfrak{g}/\mathfrak{g}(x)$ and keeps the form $\omega_x$, we get a morphism
\[\phi: G(x)\to \mathrm{Sp}(\mathfrak{g}/\mathfrak{g}(x),\omega_x).\]
Let $\mathrm{Mp}(\mathfrak{g}/\mathfrak{g}(x),\omega_x)$ be the  metaplectic group, i.e., there is a nontrivial connected double cover
\[\varphi: \mathrm{Mp}(\mathfrak{g}/\mathfrak{g}(x),\omega_x)\to \mathrm{Sp}(\mathfrak{g}/\mathfrak{g}(x),\omega_x).\]

We define a double cover of $G(x)$ as
\[G(x)^{\mathfrak{g}}=\{(g,t)\ |\ g\in G(x), t\in \mathrm{Mp}(\mathfrak{g}/\mathfrak{g}(x),\omega_x)\ \text{and}\ \phi(g)=\varphi(t)\}.\]
Let $\psi: G(x)^{\mathfrak{g}}\to G(x)$ be the covering morphism, and let $(1,-1)$ denote the nontrivial element of $\ker \psi$.

Similarly, for any group $H$, which acts on a symplectic vector space $\mathfrak{m}$ and keep the symplectic form, we have a morphism $\phi': H\to \mathrm{Sp}(\mathfrak{m})$ and a double covering morphism $\varphi': \mathrm{Mp}(\mathfrak{m})\to \mathrm{Sp}(\mathfrak{m})$. Define a double cover
\[\widetilde{H}=\{(h,t)\ |\ h\in H, t\in \mathrm{Mp}(\mathfrak{m})\ \text{and}\ \phi'(h)=\varphi'(t)\}.\]

By Segal-Shale-Weil representation (see Duflo \cite{D}, II.6), any $\hat{s}\in \mathrm{Mp}(\mathfrak{m})$ which is mapped to $s\in \mathrm{Sp}(\mathfrak{m})$ can be represented by $(s,\theta_{\hat{s}})$ (or just denoted by $(s,\theta)$), where $\theta_{\hat{s}}$ is a function depended on $\hat{s}$, with complex value module $1$ over the set of  maximal isotropic subspaces in $\mathfrak{m}$. Therefore, any element $(h,t)\in \widetilde{H}$ ($h\in H, t\in  \mathrm{Mp}(\mathfrak{m})$) can be represented by $(h,\theta_{t})$. We will also use $H^{\mathfrak{m}}$ to denote $\widetilde{H}$.

Since $^uG(x)$ is unipotent, $^uG(x)$ can be embedded into $G(x)^{\mathfrak{g}}$. Let
\begin{align*}Y^{irr}(x)&=\{\text{irreducible unitary representation}\ \pi\\& \text{of}\  G(x)^{\mathfrak{g}}/^uG(x)\ |\ \pi((1,-1))=-1\}.\end{align*}
Let $C$ denote the set of
couples $(x,\tau)$, where $x$ is a linear form of unipotent type, and $\tau\in Y^{irr}(x)$. Then the group $G$ acts on $C$ naturally.

Now, we can construct an irreducible unitary representation $T_{x,\tau}$ of $G$ (see \cite{D}, III for details).

Let $\mathfrak{b}$ be a subalgebra of strongly unipotent type relative to $x$ which is stable under $G(x)$, and let $\mathfrak{v}$ denote the nilpotent radical of $\mathfrak{b}$. Let $V$ be the subgroup corresponding to $\mathfrak{v}$. Set $B=G(x)V$. Let $v$ denote the restriction of $x$ to $\mathfrak{v}$, and $G(x)^{\mathfrak{v}}$ is defined.

Let $\mathfrak{l}\subset \mathfrak{v}$ be a polarization relative to $v$, and let $L$ be the unipotent subgroup with Lie algebra $\mathfrak{l}$. Define the irreducible unitary representation of $V$ attached to $v$ as
\[T_v=\mathrm{Ind}_L^Ve^{iv|_{\mathfrak{l}}}.\]

Define
\[T_{x,\tau}=\mathrm{Ind}_B^G(\tau'\otimes S_vT_v),\]
where $\tau'$ and $S_v$ are defined as follows, and we have that $T_{x,\tau}$ is independent of the choice of $B$ (see \cite{D}, III.16).

Define $\tau'$ to be the representation of $G(x)^{\mathfrak{v}}$ as
\[\tau'(y,\psi)=(\varphi\psi^{-1})\overline{\tau}(y,\varphi),\]
here $\overline{\tau}=\tau\otimes \chi_{x}$ of $(G(x)^{\mathfrak{g}}/^uG(x))\rtimes$$^uG(x)$ $\cong G(x)^{\mathfrak{g}}$ with $\chi_{x}$ defined by $\mathrm{d}\chi_{x}=ix|_{^u\mathfrak{g}(x)}$,
and
\[\varphi\psi^{-1}=\varphi(\mathfrak{l}'+\mathfrak{g}^{\perp})\psi(\mathfrak{l}')^{-1}\]
is a constant which is independent on the choice of the maximal totally isotropic subspace $\mathfrak{l}'$ of $\mathfrak{v}$ (see \cite{D}, II.8).

Define $S_v$ to be the action of $G(x)^{\mathfrak{v}}$ on $T_v$ by
\[S_v(y,\varphi)=\varphi(\mathfrak{l})\cdot S'_{v,\mathfrak{l}}(y),\]
and $S'_{v,\mathfrak{l}}$ is  the action of $G(x)$ on $T_v$ defined by
\[S'_{v,\mathfrak{l}}(y)=||A(y)||^{-1}F_{\mathfrak{l},y\mathfrak{l}}A(y),\]
$F_{\mathfrak{l},y\mathfrak{l}}$ is the intertwining operator from $T_{yv}$ to $T_{v}$,
and $A(y)$ is the operator from $T_{v}$ to $T_{yv}$ with
\[A(y)\alpha(z)=\alpha(y^{-1}(z)), \alpha\in T_v, z\in V.\]
It can be checked that $S_v$ is independent of the choice of $\mathfrak{l}$ (see \cite{D}, II.10).

\begin{Proposition}[\cite{D}, III.12]\label{co}
For any element $(x,\tau)\in C$, we get an irreducible unitary representation $T_{x,\tau}$ of $G$. Moreover, the map $(x,\tau)\mapsto T_{x,\tau}$ induces a bijection from $C/G$ to $\widehat{G}$.
\end{Proposition}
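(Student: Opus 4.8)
The statement is Duflo's classification theorem \cite{D}, so the plan is to reproduce the structure of his argument: an induction on $\dim\mathfrak g$, with the reductive case as the base. If $G$ is reductive then ${}^u\mathfrak g=0$, the only linear form of unipotent type is $x=0$, one has $G(0)=G$ and $\mathfrak g/\mathfrak g(0)=0$, so $G(0)^{\mathfrak g}=G$ with trivial metaplectic cover; hence $Y^{irr}(0)=\widehat G$, $C/G=\widehat G$, and $T_{0,\tau}=\tau$, so the asserted bijection is the identity. Two preliminary facts, already part of the setup preceding the statement, are that $T_{x,\tau}$ is independent of the auxiliary choices (the strongly unipotent subalgebra $\mathfrak b$, the polarizations $\mathfrak l\subset\mathfrak v$ and $\mathfrak l'\subset\mathfrak v$, and so on), and that $T_{g\cdot(x,\tau)}\cong T_{x,\tau}$ for $g\in G$, the latter because conjugation by $g$ transports every ingredient of the construction compatibly and induced representations are insensitive to it. Together these give a well-defined map $C/G\to\widehat G$.

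For surjectivity, assume $G$ is not reductive, so ${}^uG\neq 1$, and let $\pi\in\widehat G$. Restricting $\pi$ to the connected, simply connected nilpotent group ${}^uG$ and using Kirillov's identification $\widehat{{}^uG}\cong({}^u\mathfrak g)^*/{}^uG$ together with the Mackey machine for $1\to{}^uG\to G\to G/{}^uG\to1$, the ${}^uG$-spectrum of $\pi$ is a single $G$-orbit; fix a representative $u\in({}^u\mathfrak g)^*$ and put $H=G(u)$. The Mackey machine realizes $\pi$ as an induced representation assembled from a projective representation of $H$ along a polarization of $u$ inside ${}^uG$, and the relevant Mackey obstruction is precisely the two-cocycle on $\mathrm{Sp}({}^u\mathfrak g/{}^u\mathfrak g(u))$ pulled back to $H$ — which is split by the metaplectic double cover, explaining the appearance of $H^{\mathfrak v}$ and $G(x)^{\mathfrak g}$. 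Since $H$ has either strictly smaller dimension than $G$, or the same dimension with strictly smaller unipotent radical, and since $u$ is of unipotent type in the relevant sense, the inductive hypothesis applied to $H$ produces a pair $(x,\tau)\in C$ with $\pi\cong T_{x,\tau}$.

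For injectivity, suppose $T_{x,\tau}\cong T_{x',\tau'}$. I would run an induction parallel to the construction: restricting both sides to ${}^uG$ and comparing spectra shows the forms $u=x|_{{}^u\mathfrak g}$ and $u'=x'|_{{}^u\mathfrak g}$ are $G$-conjugate, so after conjugating we may take $u=u'$ and $H=G(u)=G(u')$; the Mackey machine then says the two representations of $H$ from which $\pi$ is induced are isomorphic, and the inductive hypothesis pins the remaining data down to a single $H$-orbit, hence $(x,\tau)$ and $(x',\tau')$ to a single $G$-orbit. Alternatively one can recover the coadjoint orbit $O_{x,\lambda}$ directly from $T_{x,\tau}$ (the datum $\lambda\in L(x)$ being read off from the differential of $\tau$ up to the explicit twist in $\tau'$) and invoke Proposition \ref{po}, which says $(x,\lambda)\mapsto O_{x,\lambda}$ is a bijection $D/G\to\mathfrak g^*/G$, to fix $x$ and $\lambda$ up to $G$-conjugacy, then argue as above for $\tau$.

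The hard part is the inductive step, and inside it the identification of the Mackey obstruction for $H=G(u)$ with the metaplectic cocycle, so that the genuine representations of $H^{\mathfrak v}$ — those nontrivial on $(1,-1)$ — correspond exactly to the projective representations that actually occur; organizing the polarizations, the constant $\varphi\psi^{-1}$ twisting $\tau$ into $\tau'$, and the operator $S_v$ so that the induction closes and the output is genuinely irreducible is the technical heart of \cite{D}, II--III, and in the present paper it is simply quoted.
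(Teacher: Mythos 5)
The paper does not prove this statement at all: Proposition \ref{co} is quoted as \cite{D}, III.12, in a subsection that opens with ``We summarize some results of Duflo \cite{D} without proofs,'' so there is no internal argument to compare yours against. Your outline does track the actual structure of Duflo's proof --- restrict to the unipotent radical ${}^uG$, use Kirillov theory $\widehat{{}^uG}\cong({}^u\mathfrak g)^*/{}^uG$ and the Mackey machine (the $G$-orbits being locally closed for an algebraic action, so the spectrum of $\pi|_{{}^uG}$ is carried by one orbit), identify the Mackey obstruction on the stabilizer with the metaplectic cocycle so that exactly the genuine representations of $G(x)^{\mathfrak g}$ occur, and close by induction on dimension --- and you are candid that the well-definedness of $T_{x,\tau}$, the constants $\varphi\psi^{-1}$, the operator $S_v$, and the cocycle identification are taken from \cite{D}, II--III. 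That is precisely what the paper itself does by citation, so as a gloss on where the result comes from your proposal is fine; as a self-contained proof it is not one, and does not claim to be.

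If you do want it to function as a proof sketch, the one point to repair is the inductive step. Your dichotomy ``$H=G(u)$ has strictly smaller dimension, or the same dimension with strictly smaller unipotent radical'' fails exactly in the branch where $u=x|_{{}^u\mathfrak g}$ is fixed by all of $G$: for $u=0$ with $G$ nonreductive one has $H=G$ and ${}^uH={}^uG$, so nothing decreases. There the correct move is that $\pi$ is trivial on ${}^uG$ and factors through the reductive quotient $G/{}^uG$ (and in Duflo's actual argument the degenerate cases are handled by descending through the center of the unipotent radical), and the injectivity argument needs the same case analysis. Your alternative injectivity route --- reading $\lambda$ off ``from the differential of $\tau$'' and invoking Proposition \ref{po} --- is also looser than it sounds, since $\tau$ must first be untwisted by $\chi_x$ and the metaplectic factor before any such identification of $O_{x,\lambda}$ with an invariant of $T_{x,\tau}$ makes sense. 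None of this affects the paper, which uses the proposition as a black box.
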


Now, we can obtain the correspondence between coadjoint orbits and irreducible unitary representations of an algebraic group $G$ (see \cite{D}, III.19, III.20).

Let $f\in \mathfrak{g}^*$, by Proposition \ref{po}, we get $\mathcal{O}_f=\mathcal{O}_{x,\lambda}$ for some $x\in \mathfrak{g}^*$ of unipotent type and $\lambda\in L(x)\cong (\mathfrak{g}(x)/^u\mathfrak{g}(x))^*$. Assume that $\tau$ is an irreducible unitary representation of $G(x)^{\mathfrak{g}}/^uG(x)$ attached to $\lambda$ such that $\tau((1,-1))=-1$, then $T_{x,\tau}$ is an irreducible unitary representation of $G$ attached to $\mathcal{O}_f$.

\subsubsection{Duflo's construction in the $P_n(k)$ case}

Set $G=P_n(k)$, we show how to attach representations with coadjoint orbits by the above method (using the same notation).

For any element $f\in \mathfrak{g}^*$, applying the results in section 2, we have that $f$ is conjugated to an element $\mathrm{pr}'(\mathrm{diag}(A,J_m))$, for some $m\in \mathbb{Z}_+$ and $A\in \mathfrak{gl}_{n-m}(k)$.

Let $x=\mathrm{pr}'(\mathrm{diag}(0,J_m))$, then $x$ is of unipotent type.
Actually, $\mathfrak{g}(x)$ has reductive factor
\[\mathfrak{g}'=\{\mathrm{diag}(X,0_{m\times m})\ |\ X\in \mathfrak{gl}_{n-m}(k)\}\subset \ker(x).\]
Let
\[\mathfrak{b}=\{\left(\begin{array}{cc}X&Y \\ 0& Z\end{array}\right)\ |\ X\in \mathfrak{gl}_{n-m}(k), Y\in  M_{(n-m)\times m}(k), Z\in  \mathfrak{u}_m(k) \}, \]
where $\mathfrak{u}_m(k)$ is the set of strictly upper triangular matrices over $k$, then $\mathfrak{b}$ is a subalgebra of strongly unipotent type relative to $x$. Therefore, $x$ is of unipotent type.

Let $\lambda'=\mathrm{pr}'(\mathrm{diag}(A,0))$, then $\lambda:=\lambda'|_{\mathfrak{g}(x)} \in L(x)$, since $x|_{^u\mathfrak{g}(x)}=0$ and $\lambda|_{^u\mathfrak{g}(x)}=0$. Moreover,
\[L(x)\cong (\mathfrak{g}(x)/^u\mathfrak{g}(x))^*\cong\mathfrak{gl}_{n-m}(k)^*.\]
Also it is easy to see that $\mathcal{O}_f=\mathcal{O}_{x,\lambda}$.

Furthermore, we have that $G(x)^{\mathfrak{g}}$ is a trivial double cover of $G(x)$, that is $G(x)^{\mathfrak{g}}\cong G(x)\times \mathbb{Z}_2$. Actually,
\[G(x)=\{\left(\begin{array}{ccc}Y&Z&0 \\ 0&1&0\\ 0&0&I_{m-1}\end{array}\right)\ |\ Y\in \mathrm{GL}_{n-m}(k), Z\in  M_{(n-m)\times 1}(k)\}.\]
It is not hard to see that the image of the reductive factor
\[\{\mathrm{diag}(Y,I_m)\ |\ Y\in \mathrm{GL}_{n-m}(k)\}\]
in
\[\mathrm{Sp}(\mathfrak{g}/\mathfrak{g}(x),\omega_x)\cong \mathrm{Sp}((n-\frac{m}{2})(m-1),k)\]
has trivial double cover in the metaplectic group $\mathrm{Mp}((n-\frac{m}{2})(m-1),k)$. Therefore, $G(x)^{\mathfrak{g}}$ is a trivial double cover of $G(x)$.

To attach $\mathcal{O}_f$ with representation of $P_n(k)$,  we assume that $\tau_0$ is the irreducible unitary representation of
\[G(x)/^uG(x)\cong \mathrm{GL}_{n-m}(k)\]
which is attached to the $\mathrm{GL}_{n-m}(k)$-orbit of $\lambda\in L(x)\cong\mathfrak{gl}_{n-m}(k)^*$. Define an irreducible unitary representation of $G(x)/^uG(x)\times \mathbb{Z}_2$, denoted by $\tau$, such that $\tau|_{G(x)/^uG(x)}=\tau_0$ and $\tau((1,-1))=-1$. Then the irreducible representation of the irreducible unitary representation attached to $\mathcal{O}_f$ is
\[T_{x,\tau}=\mathrm{Ind}_B^G(\tau'\otimes S_vT_v).\]

The subalgebra of unipotent type relative to $x$ constructed above, $\mathfrak{b}$, is the canonical one. And the corresponding subgroup $B$ is
\[B=\{\left(\begin{array}{cc}Y & Z \\ 0& U \end{array}\right)\ |\ Y\in \mathrm{GL}_{n-m}(k), Z\in M_{(n-m)\times m}(k), U\in U_m(k)\}.\]
Here $U_m(k)$ is the set of upper triangular unipotent matrices over $k$. Moreover, $\mathfrak{v}=$$^u\mathfrak{b}$ itself is the polarization of $v=x|_{\mathfrak{v}}$, and we get a one dimensional unitary representation of $V$, $T_v=e^{ix|_{\mathfrak{v}}}$.

Since $\omega_x$ is trivial on $\mathfrak{b}$, we get $G(x)^{\mathfrak{v}}\cong G(x)\times \mathbb{Z}_2$ and $G(x)^{\mathfrak{b}}\cong G(x)\times \mathbb{Z}_2$, and therefore $S_v|_{G(x)}$ is the one dimensional trivial representation, so we have
\[\tau'\otimes S_vT_v=\tau_0\otimes e^{ix}.\]
Therefore, the irreducible unitary representation of $G$ attached to $\mathcal{O}_f$ with $f=\mathrm{pr}'(\mathrm{diag}(A,J_m))$ is
\[T_{x,\tau}=\mathrm{Ind}_B^G(\tau_0\otimes e^{ix})=\mathrm{I}^{m-1}\mathrm{E}\tau_0,\]
where $\tau_0$ is an irreducible unitary representation of $\mathrm{GL}_{n-m}(k)$ attached to the $\mathrm{GL}_{n-m}(k)$-orbit of $\mathrm{pr}_{n-m}(A)$.

\section{A generalization of Duflo's conjecture}\label{sec-6}

We establish a generalization of Duflo's conjecture for the restriction of an irreducible unitary representation $\pi$ of $G=\mathrm{GL}_n(k)$ to the mirabolic subgroup $P=P_n(k)$, $k=\mathbb{R}$ or $\mathbb{C}$, where $\pi$ is attached to a $G$-coadjoint orbit of $f\in \mathfrak{g}^*$ in section 5. Set $\mathcal{O}_{\pi}=\mathcal{O}_f$. Let $\mathrm{p}: \mathcal{O}_{\pi}\to \mathfrak{p}^*$ be the moment map. Then we have the following theorem.

\begin{Theorem}
There are only finitely many $P$-orbits in $\mathrm{p}(\mathcal{O}_{\pi})$, including a unique open $P$-orbit $\Omega$ in $\mathrm{p}(\mathcal{O}_{\pi})$. Moreover,
\begin{itemize}
\item[(1)] the moment map $\mathrm{p}: \mathcal{O}_{\pi}\to \mathfrak{p}^*$ is proper over $\Omega$,
\item[(2)] the restriction of $\pi$ to $P$, $\pi|_P$ is irreducible, and is attached to $\Omega$ in the sense of Duflo,
\item[(3)] the reduced space of $\Omega$ (with respect to the moment map $\mathrm{p}$) is a single point.
\end{itemize}
\end{Theorem}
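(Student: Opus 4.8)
The plan is to obtain the statement by matching the three inputs assembled in the previous sections: the geometry of the moment map from Section \ref{sec-3}, Kirillov's conjecture together with Sahi's adducibility formulas from Section \ref{sec-4}, and the orbit-method constructions for $G$ and for $P$ from Section \ref{sec-5}. The geometric part is essentially already done. By Theorem \ref{GLnC} (when $k=\mathbb{C}$) and Theorem \ref{GLnR} (when $k=\mathbb{R}$), the set $\mathrm{p}(\mathcal{O}_\pi)$ is a finite union of $P$-orbits and contains a unique dense one, which I call $\Omega$; since an orbit of an algebraic group action is locally closed, $\Omega$ is open in its closure, hence open in $\mathrm{p}(\mathcal{O}_\pi)$, so it is the unique open $P$-orbit. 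Those same theorems say that $\mathrm{p}$ is proper over $\Omega$ and that the reduced space $\mathrm{p}^{-1}(\Omega)/P$ is a single point. So (1) and (3) need nothing further, and the real content is (2).

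For (2), the irreducibility of $\pi|_P$ is Kirillov's conjecture, proved in the archimedean case by Baruch \cite{Bar}; what must be shown is that $\pi|_P$ is the representation that Duflo's recipe attaches to $\Omega$. First I would describe $\Omega$ explicitly. Taking $I=I^o$ in Proposition \ref{obc} (resp. Proposition \ref{obr}), the orbit $\Omega=P\cdot(g_{I^o}\cdot f)$ is $P$-conjugate to $\mathrm{pr}'(\eta)$ with $\eta=\mathrm{diag}(A_1,\dots,A_m,J_d)$ and $d=\sum_j d_j$, where $\mathrm{diag}(A_1,\dots,A_m)$ is obtained from $\xi$ by deleting one Jordan block of the largest size occurring for each eigenvalue datum (for the open orbit the auxiliary block $J_t$ of those propositions has $t=0$ and disappears). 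By Theorem \ref{obs} this $\mathrm{pr}'(\eta)$ is already in the normal form used in the last subsection of Section \ref{sec-5}, so Duflo's construction there attaches to $\Omega$ the representation $\mathrm{I}^{d-1}\mathrm{E}\tau_0$, where $\tau_0$ is the irreducible unitary representation of $\mathrm{GL}_{n-d}(k)$ attached, via the orbit method of Section \ref{sec-5}, to the $\mathrm{GL}_{n-d}(k)$-orbit of $\mathrm{diag}(A_1,\dots,A_m)$.

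Next I would compute $\pi|_P$ from the other side. By Section \ref{sec-5}, $\pi$ is a $\times$-product of unitary characters $|\det|^{ia}\mathrm{sgn}(\det)^w|_{\mathrm{GL}_t(k)}$ and, in the real case, Speh representations $|\det|^{ia}\delta(t,b)$. By Sahi's theorem \cite{S}, adducibility depths add under $\times$ and $A(\rho\times\sigma)=(A\rho)\times(A\sigma)$; and by the formulas recalled in Section \ref{sec-4}, a unitary character of $\mathrm{GL}_t(k)$ is adducible of depth $1$ with adduced representation its restriction to $\mathrm{GL}_{t-1}(k)$ \cite{S}, while $\delta(t,b)$ is adducible of depth $2$ with $A\delta(t,b)=\delta(t-1,b)$ \cite{SS} (the factor being dropped when $t=1$). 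Hence $\pi$ is adducible, one reads off its depth $d'$ and its adduced representation $A\pi$, and $\pi|_P=\mathrm{I}^{d'-1}\mathrm{E}(A\pi)$.

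The crux is then the identification $d'=d$ and $A\pi=\tau_0$, and this is where I expect the main obstacle to lie — not for any conceptual reason, but because it is a bookkeeping comparison of two rather elaborate recipes. Both halves reduce to one elementary combinatorial fact: if a partition $P'$ is obtained from a partition $P$ by removing a single copy of its largest part, then the dual partitions satisfy $(P')^*_j=(P^*)_j-1$ for every $j$ (with the convention that parts equal to $0$ are dropped). Plugging this into the dual-partition descriptions of $\tau_0$ and of the Speh and character building blocks of $\pi$ from Section \ref{sec-5}, and carefully tracking the $|\det|^{ia}$ twists, the $\mathrm{sgn}(\det)^w$ factors and the vanishing $\mathrm{GL}_0$ terms, one sees that deleting the top Jordan block of each eigenvalue datum corresponds exactly to applying $A$ to the corresponding building block; this yields $A\pi=\tau_0$ and $d'=d$ at once, so $\pi|_P=\mathrm{I}^{d-1}\mathrm{E}\tau_0$ is precisely the representation attached to $\Omega$, which completes (2).
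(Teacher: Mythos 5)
Your proposal is correct and takes essentially the same route as the paper: (1) and (3) are read off from Theorems \ref{GLnC} and \ref{GLnR}, and (2) is proved by matching Duflo's construction for the dense orbit $\Omega$ (deleting one Jordan block of the largest size per eigenvalue datum, so that $\Omega$ corresponds to $\mathrm{I}^{d-1}\mathrm{E}\tau_0$) against $\pi|_P$ computed from Kirillov's conjecture together with Sahi's multiplicativity and the adduced representations of characters and Speh representations. The only difference is completeness rather than method: the paper carries out this comparison explicitly for a single unipotent example of $\mathrm{GL}_n(\mathbb{C})$ and asserts the remaining cases, while your reduction of the general bookkeeping to the dual-partition identity (removing the largest part lowers every part of the dual by one) is exactly the fact that makes those remaining cases go through.
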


\begin{proof}
By Theorem \ref{GLnC}, Theorem \ref{GLnR}, we get (1),(3) immediately. To get (2), we firstly get $\pi|_P$ from section 4 and get $\Omega=\mathrm{p}(g_{I^o}\cdot f)$ from the results of moment maps in section 3. It remains to verify if $\pi_P$ is attached to $\Omega$ by Duflo's construction in section 5. We verify one example and one can verify the other cases easily.

Let $ 1\leq t_p\leq \cdots\leq t_1\in \mathbb{Z}_+$ such that $\sum_{i=1}^pt_i=n$. Let $\pi$ be the unipotent representation
\[1|_{G_{t_1}(\mathbb{C})}\times \cdots\times 1|_{G_{t_p}(\mathbb{C})}\]
of $G_n(\mathbb{C})$, which is attached to the coadjoint orbit of $f=\mathrm{pr}(\xi)$ with
\[\xi=\mathrm{diag}(J_{k_1}^{l_1},\cdots,J_{k_r}^{l_r}),\]
where $k_i,l_i\in \mathbb{Z}_+$ for $1\leq i\leq r$, and $\{\underbrace{k_1,\cdots,k_1}_{l_1},\cdots,\underbrace{k_r,\cdots,k_r}_{l_r}\}$ is the dual partition of $\{t_1,\cdots,t_p\}$ (so $k_1\geq \cdots \geq k_r$).

Then
\[\pi|_P=I^pE(1|_{G_{t_1-1}(\mathbb{C})}\times \cdots\times 1|_{G_{t_p-1}(\mathbb{C})}).\]

To get the unique dense open $P$-orbit in $\mathrm{p}(\mathcal{O}_f)$, we set $\tilde{f}=\mathrm{pr}(\tilde{\zeta})$ with $\tilde{\zeta}=\mathrm{diag}(J_{k_r}^{l_r},\cdots,J_{k_1}^{l_1})$. Then $\tilde{f}$ is $G_n(\mathbb{C})$-conjugated to $f$, so $\mathcal{O}_f=\mathcal{O}_{\tilde{f}}$. By Theorem \ref{GLnC}, we have $\Omega=P\cdot \mathrm{p}(g_{I^o}\cdot\tilde{f})$ is the unique dense open $P$-orbit in $\mathrm{p}(\mathcal{O}_f)$.
Since $I^o=\{n\}$ and $g_{I^o}=I_n$, we have $g_{I^o}\cdot\tilde{f}=\tilde{f}$. It is easy to see that $\mathrm{p}(\tilde{f})$ is $P$-conjugated to the element $\mathrm{pr}'(\zeta)$ with
\[\zeta=\mathrm{diag}(J_{k_1}^{l_1-1},J_{k_2}^{l_2},\cdots,J_{k_r}^{l_r},J_{k_1}),\]
So $\Omega=P\cdot \mathrm{pr}'(\zeta)$.

Based on  Duflo's construction, we get that the irreducible unitary representation of $P$ attached to $\Omega$ is
\[I^{k_1}E(1|_{G_{s_1}(\mathbb{C})}\times \cdots\times 1|_{G_{s_q}(\mathbb{C})}),\]
where $\{s_1,\cdots,s_q\}$ is the dual partition of $\{\underbrace{k_1,\cdots,k_1}_{l_1-1},\underbrace{k_2,\cdots,k_2}_{l_2},\cdots,\underbrace{k_r,\cdots,k_r}_{l_r}\}$. By definition of dual partition, we have $p=q$, $s_i=t_i-1$ for $1\leq i\leq p$, and $k_1=p$. Hence we get that $\pi|_{P}$ is attached to $\Omega$.
\end{proof}

\section*{Acknowledgements}
The author would like to thank Jun Yu for suggesting this problem
and many discussions, thank Daniel Kayue Wong for discussions on the orbit method of reductive groups
and thank Huajian Xue for many discussions, etc.

\end{document}